\documentclass[twocolumn]{autart}    

\usepackage{graphicx}          
\usepackage{subfigure}

\usepackage{amsmath,amssymb,amsfonts}
\usepackage{algorithmic}
\usepackage{textcomp}
\usepackage{bm}
\usepackage{caption}
\usepackage{comment}
\usepackage{xcolor}
\usepackage{epstopdf}
\usepackage{mathrsfs}
\usepackage[scr=boondox]{mathalfa}
\allowdisplaybreaks[1]
\usepackage{stfloats}
\usepackage{array}
\usepackage{booktabs}

\usepackage{enumitem}
\newtheorem{theorem}{Theorem}   
\newtheorem{lemma}{Lemma}
\newtheorem{problem}{Problem}
\newtheorem{proposition}{Proposition}
\newtheorem{corollary}{Corollary}

\newtheorem{remark}{Remark}

\newtheorem{definition}{Definition}

\DeclareMathOperator{\rank}{rank}
\DeclareMathOperator{\im}{im}

\newcommand{\calA}{\mathcal{A}}
\newcommand{\calB}{\mathcal{B}}
\newcommand{\calD}{\mathcal{D}}

\newcommand{\calL}{\mathcal{L}}
\newcommand{\calM}{\mathcal{M}}

\newcommand{\calT}{\mathcal{T}}
\newcommand{\calU}{\mathcal{U}}

\begin{document}

\begin{frontmatter}

\title{Data-Driven Robust Model Reference Adaptive Control\\ with Parameter Convergence} 
\vspace*{-4ex}
	
\thanks[footnoteinfo]{This paper was not presented at any IFAC meeting. 
This work was partially supported by Jiangsu Provincial Scientific Research Center of Applied Mathematics grant BK20233002. 
H. J. van Waarde acknowledges financial support by the Dutch Research Council under the NWO Talent Programme Veni Agreement (VI.Veni.222.335).
Corresponding author: Simone Baldi.}

\author[BI,cyber]{Jiwei Wang}\ead{jiwei.wang@rug.nl},    
\author[math]{Simone Baldi}\ead{simonebaldi@seu.edu.cn},               
\author[BI]{Henk J. van Waarde}\ead{h.j.van.waarde@rug.nl}  

\address[BI]{Bernoulli Institute for Mathematics, Computer Science and Artificial Intelligence, University of Groningen, The Netherlands}  
\vspace{-0.15cm}
\address[cyber]{School of Cyber Science and Engineering, Southeast University, China}          
\vspace{-0.15cm}
\address[math]{School of Mathematics, Southeast University, China}     
\vspace{-0.4cm}

\begin{keyword}                           
Robust model reference adaptive control, parameter convergence, data-driven control.
\end{keyword}                             
                                          
\begin{abstract}
This paper provides a data-driven design guaranteeing parameter convergence in model reference adaptive control (MRAC) when the to-be-controlled system is subject to process noise. 
In the context of MRAC, parameter convergence refers to ensuring convergence of the adaptive gains to a solution of the matching equations, or to an approximate solution when noise is present.
In classical MRAC, even small noise may induce parameter drift, thus lacking robustness to noise. 
Meanwhile, existing robust MRAC methods cannot ensure parameter convergence without imposing excitation conditions on data.
A key feature of the proposed framework is to ensure convergence of the adaptive gains to an approximate solution of the matching equations without relying on persistently exciting signals. Furthermore, the matching error can be explicitly characterized as a function of the noise. 
This explicit characterization allows to establish a necessary and sufficient condition on the noise characteristics under which the limit closed-loop system matrix is Hurwitz.
In the noise-free case, the proposed framework results in exact parameter convergence. 
Notably, as compared to existing methods achieving exact parameter convergence in the noise-free case, the condition on data in the proposed framework is weaker.
\vspace{-0.4cm}
\end{abstract}

\end{frontmatter}

\section{Introduction}

Control of dynamical systems with unknown parameters is a fundamental theme in control theory. 
Adaptive control theory provides several systematic frameworks to face unknown parameters by adjusting the control \mbox{gains} online via adaptive laws driven by data generated in real-time by the closed-loop system \cite{ioannou2006adaptive,narendra2012stable}. 
In the noise-free setting, it is well known that convergence of the tracking error can be achieved without conditions on the data. 
In contrast, convergence of the adaptive gains to their ideal values requires additional conditions on the data, the most classical requirement being persistence of excitation \cite{boyd1986necessary}. 
The general lack of parameter convergence has the consequence that adaptive control is not automatically robust to noise.
Adaptive laws that ensure convergent tracking error in ideal noise-free settings can perform poorly or be destabilized by the presence of noise: parameter drift is a typical manifestation \cite{rohrs1985robustness,narendra2012stable}. 
Several countermeasures have been investigated to robustify adaptive laws, either by modifying the adaptation mechanism (via leakage, dead-zone, projection, sample covariance matrix), or by imposing excitation conditions on data \cite{anderson1986stability,ioannou2012robust,mirkin2013tube,hussain2017computable,xie2018h,franco2021robust,zhao2025data}.

This study will focus on continuous-time model reference adaptive control (MRAC), a mature adaptive control framework \cite{annaswamy2021historical,ioannou2006adaptive,landau2011adaptive,narendra2012stable,nguyen2018model}, yet still well investigated in various switched, distributed, partial-feedback, and learning-based variants \cite{yuan2020lyapunov,yue2023model,song2021partial,annaswamy2023integration,kurdila2024nonparametric}.
Among the recent advances of MRAC, results based on concurrent learning \cite{chowdhary2010concurrent,chowdhary2013concurrent,lee2019concurrent}, combined adaptation \cite{lavretsky2009combined,roy2018combined,cho2018composite}, and data informativity \cite{wang2025bridging} are especially relevant to our study because they provide, in a noise-free setting, excitation conditions for parameter convergence weaker than persistence of excitation.
In the context of MRAC, parameter convergence refers to ensuring convergence of the adaptive gains to a solution of the so-called matching equations, or to an approximate solution when noise is present.
In parallel, offline data-driven control for continuous-time systems with noisy data and without parametric models has recently received significant attention \cite{eising2025sampling,Bosso2025,wakaiki2025data,wakaiki2026data}. 
However, existing data-driven control results are primarily limited to offline schemes, and it remains unclear how to achieve similar robustness in online adaptive schemes or how noise fundamentally affects parameter convergence.
MRAC, with its inherent online adaptation mechanism, provides a natural framework to address these challenges.

A key question in robust MRAC theory is to characterize how noise affects convergence of the adaptive gains. 
In a noise-free (discrete-time) MRAC setting, necessary and sufficient conditions have been given for the existence of an adaptive law guaranteeing the convergence of the associated gains to a solution of the matching equations \cite{wang2025bridging}. 
However, the framework used to derive such conditions only holds in a noise-free setting.
Despite the progress in MRAC, a framework is lacking that can systematically explain how noise affects the parameter convergence and how this influences the closed-loop behavior.
The first open problem solved by this study is to design an MRAC mechanism guaranteeing that, even in the presence of process noise, the adaptive gains converge to an approximate solution of the matching equations. 
Notably, the mechanism does not rely on persistently exciting signals and the matching error can be explicitly characterized depending on the noise. 
The design we propose combines real-time data generated online by the closed-loop system with a set of previously recorded data, a setup in line with many concurrent learning, combined adaptation, and data informativity methods that also exploit real-time and recorded data.

A second important question in robust MRAC theory is to guarantee whether, in the presence of noise, the adaptive gains converge to a closed-loop system matrix that is Hurwitz.
Parameter drift phenomena reveal that this is not guaranteed in adaptive control \cite{ioannou2012robust}. 
Based on the characterization of the matching error as a function of noise, the second result of this study is to give conditions on the level of noise such that the closed-loop system matrix is Hurwitz in the limit.
To achieve this, we draw on the recently developed data informativity framework \cite{vanDBLSCT2025}---particularly on its extensions to model reference control \cite{wang2025necessary}---to quantify the impact of noise on parameter convergence.
The main tool underlying the proposed robust MRAC framework is a novel analysis based on quadratic matrix inequalities (QMIs) to characterize the effect of the signal-to-noise ratio on stability.
As a final result, we show that the proposed framework has desirable properties even in the noise-free setting, where exact parameter convergence can be achieved with weaker conditions on data than the relaxed excitation conditions proposed in the literature.

In summary, this work provides a data-driven robust MRAC design that can quantify the effects of process noise on convergence of its adaptive gains.
The main contributions of this work are as follows:
\begin{itemize}
	\item 
	We propose an adaptive law and associated control law that, without requiring persistently exciting signals, ensure exponential convergence of the adaptive gains to a manifold where the parameter matching error between the closed-loop system and the reference model can be explicitly characterized as a function of the noise (see Theorem~\ref{Tec}).
	\item
	By combining the Kalman-Yakubovich-Popov (KYP) lemma with QMI techniques, we provide a necessary and sufficient condition such that, for a given signal-to-noise ratio, the closed-loop system matrix is guaranteed to be Hurwitz in the limit (see Theorem~\ref{Tstability}).
	Based on the stability analysis for the closed-loop system matrix, boundedness of all closed-loop signals is proven (see Theorem~\ref{Tbound}).
	\item
	In the noise-free setting, our framework guarantees exact convergence of the adaptive gains to a solution of the matching equations under a weaker condition on data than the relaxed excitation conditions proposed in the literature. 
	In particular, it is proven that it is not necessary for the data to be such that the system dynamics can be uniquely identified (see Proposition~\ref{Pweaker}).
\end{itemize}

The remainder of this paper is organized as follows.
Section~2 reviews the MRAC setup and formulates the problem.
Section~3 presents and analyzes the robust MRAC framework.
Section~4 numerically validates the theoretical results.
Section~5 concludes the paper.

\emph{Notation:}
We denote the space of real (resp. complex) $ n $-dimensional vectors by $ \mathbb{R}^n $ (resp. $ \mathbb{C}^n $), the space of real $ n \times m $ matrices by $ \mathbb{R}^{n\times m} $, and the space of real \emph{symmetric} (resp. complex \emph{Hermitian}) $ n \times n $ matrices by $ \mathbb{S}^{n} $ (resp. $ \mathbb{H}^{n} $).
The set of \emph{positive integers} is denoted by $ \mathbb{N} $ and the set of non-negative real numbers is denoted by $\mathbb{R}_+$.
The \textit{Euclidean norm} of a vector $ v \in \mathbb{R}^n $ is denoted by $ |v| $.
The \textit{identity matrix} of appropriate size is denoted by $ I $.
Given a matrix $ A $, its \textit{transpose} is denoted by $ A^{\top} $,
its \textit{complex conjugate transpose} is denoted by $ A^* $,
and its \textit{Moore-Penrose pseudo-inverse} is denoted by $ A^{\dagger} $.
The matrix $ A \in \mathbb{S}^{n} $ is said to be \textit{positive definite} (denoted by $ A>0 $) if $ x^{\top}Ax>0 $ for all nonzero $ x\in\mathbb{R}^n $, and \textit{positive semidefinite} (denoted by $ A\geq0 $) if $ x^{\top}Ax\geq0 $ for any $ x\in\mathbb{R}^n $.
A square matrix is \textit{Hurwitz} if all its eigenvalues have negative real parts.

\section{Problem Formulation}\label{SPF}

Consider the continuous-time system
\begin{equation}\label{cs}
\dot{x}(t)=A_\textrm{s}x(t)+B_\textrm{s}u(t)+E_\textrm{s}w(t), \quad x(0)=x_0,
\end{equation}
and the continuous-time reference model
\begin{equation}\label{cr}
\dot{x}_\textrm{m}(t)=A_\textrm{m}x_\textrm{m}(t)+B_\textrm{m}r(t), \quad x_\textrm{m}(0)=x_\textrm{m0},
\end{equation}
where $ x \in \mathbb{R}^n $ is the system state, $ u \in \mathbb{R}^m $ is the control input, $ w \in \mathbb{R}^q $ is the unknown bounded process noise, $ x_\textrm{m} \in \mathbb{R}^n $ is the reference state, and $ r\in \mathbb{R}^p $ is the reference input, where $ m \geq p $.
The system matrices $ A_\textrm{s} \in \mathbb{R}^{n\times n}, B_\textrm{s} \in \mathbb{R}^{n\times m}, E_\textrm{s} \in \mathbb{R}^{n\times q} $ are unknown, whereas the reference model matrices $ A_\textrm{m} \in \mathbb{R}^{n\times n} $ and $B_\textrm{m} \in \mathbb{R}^{n\times p} $ are given. 
We assume that $ A_\textrm{m} $ is Hurwitz and $(A_\textrm{m},B_\textrm{m})$ is controllable.

The model reference control problem is to find a fixed-gain controller
\begin{equation}\label{controllerf}
u(t) = Kx(t) + Lr(t),
\end{equation}
where the gains $ K\in \mathbb{R}^{m\times n} $ and $ L\in \mathbb{R}^{m\times p} $ are such that
\begin{equation}\label{mc}
A_\textrm{m}-A_\textrm{s}-B_\textrm{s}K=0, \quad B_\textrm{m}-B_\textrm{s}L=0.
\end{equation}
The rationale for \eqref{mc}, commonly referred to as matching equations in the literature \cite{ioannou2006adaptive,narendra2012stable}, is the following: 
substituting \eqref{controllerf} into \eqref{cs} and making use of \eqref{mc}, the dynamics of the closed-loop system become
\begin{equation}\label{dymref}
\dot{x}(t)=A_\textrm{m}x(t)+B_\textrm{m}r(t)+E_\textrm{s}w(t).
\end{equation}
Via the matching equations \eqref{mc}, the closed-loop dynamics \eqref{dymref} match the dynamics of the reference model \eqref{cr}, in the sense that the tracking error $e=x-x_\textrm{m}$ has dynamics
$
\dot{e}(t)= A_\textrm{m} e(t)+E_\textrm{s}w(t).
$
This implies that the trajectories of $e$ will be bounded, with a bound depending on the bound on the process noise, but independent of the initial conditions.

Solving \eqref{mc} requires knowledge of $ A_\textrm{s} $ and $ B_\textrm{s} $. 
One way of dealing with the case that $A_s$ and $B_s$ are unknown is model reference adaptive control (MRAC), where the fixed-gain controller \eqref{controllerf} is replaced by the adaptive-gain controller
\begin{equation}\label{controller}
u(t) = \hat{K} (t)x(t) + \hat{L}(t)r(t).
\end{equation} 
Here the adaptive gains $ \hat{K}(t) \in \mathbb{R}^{m \times n} $ and $ \hat{L}(t) \in \mathbb{R}^{m \times p} $ can be interpreted as estimates of $K$ and $L$ in \eqref{controllerf}, to be updated online.
In this work, the MRAC problem will be solved by making use of data collected from system \eqref{cs}.
Suppose that measurements of an input-state trajectory $(u,x)$ of \eqref{cs} on a time interval $[0,\calT]$ are initially available.
Define the matrices
\begin{equation}\label{fisd}
\begin{aligned}
X^\textrm{D}:=&\begin{bmatrix}
x^\textrm{df}(\tau_1)& x^\textrm{df}(\tau_2)& \cdots& x^\textrm{df}(\tau_N)
\end{bmatrix}
\in\mathbb{R}^{n\times N},\\
X:=&\begin{bmatrix}
x^\textrm{f}(\tau_1)& x^\textrm{f}(\tau_2)& \cdots& x^\textrm{f}(\tau_N)
\end{bmatrix}
\in\mathbb{R}^{n\times N},\\
U:=&\begin{bmatrix}
u^\textrm{f}(\tau_1)& u^\textrm{f}(\tau_2)& \cdots& u^\textrm{f}(\tau_N)
\end{bmatrix}\in\mathbb{R}^{m\times N},\\
W:=&\begin{bmatrix}
w^\textrm{f}(\tau_1)& w^\textrm{f}(\tau_2)& \cdots& w^\textrm{f}(\tau_N)
\end{bmatrix}\in\mathbb{R}^{q\times N},
\end{aligned}
\end{equation}
where $ 0<\tau_1<\tau_2<\dots<\tau_N\leq \calT  $ and $x^\textrm{df}$, $x^\textrm{f}$, $u^\textrm{f}$ and $w^\textrm{f}$ satisfy the (low-pass filtering) equations
\begin{align}
\label{xd}
&\dot{x}^\textrm{df}(t)=-\rho{x}^\textrm{df}(t)+\dot{x}(t), 
&&x^\textrm{df}(0)=0,\\
\label{xf}
&\dot{x}^\textrm{f}(t)=-\rho{x}^\textrm{f}(t)+x(t), 
&& x^\textrm{f}(0)=0,\\
\label{uf}
&\dot{u}^\textrm{f}(t)=-\rho{u}^\textrm{f}(t)+u(t), 
&& u^\textrm{f}(0)=0,\\
\label{wf}
&\dot{w}^\textrm{f}(t)=-\rho{w}^\textrm{f}(t)+w(t), 
&& w^\textrm{f}(0)=0,
\end{align}
where $ \rho>0 $.
We note that $ {x}^\textrm{df} $ can be computed without measuring $ \dot{x} $ since, using integration by parts, the solution to \eqref{xd} is
\begin{equation}\label{xdf}
{x}^\textrm{df}(t) = x(t)-\exp(-\rho t)x(0)-\rho{x}^\textrm{f}(t).
\end{equation}
Hence, the dynamical systems \eqref{xf}-\eqref{xdf} play the role of filters to avoid direct measurements of $ \dot{x} $ \cite{wang2025experiment}, so that the data matrices $\left( U,X,X^\textrm{D}\right) $ can be obtained from measurements of $(u,x)$.
We also note that, $w$ being unknown, the values in $W$ are unknown as well.
It follows from \eqref{cs} and \eqref{xd}-\eqref{wf} that
\begin{equation}\label{fcs}
x^\textrm{df}(t)=A_\textrm{s}x^\textrm{f}(t)+B_\textrm{s}u^\textrm{f}(t)+E_\textrm{s}w^\textrm{f}(t),
\end{equation}
for all $t \in [0,\calT]$, which implies
\begin{equation}\label{fcsd}
X^\textrm{D}=A_\textrm{s}X+B_\textrm{s}U+E_\textrm{s}W.
\end{equation}

In the noise-free case ($q = 0$), the MRAC problem is to find $\hat{K}(t)$ and $\hat{L}(t)$ in \eqref{controller}, in such a way that
\begin{equation}\label{lime}
\lim\limits_{t\to\infty}
\begin{bmatrix}
	A_\textrm{m}-A_\textrm{s}-B_\textrm{s}\hat{K}(t) &\ B_\textrm{m}-B_\textrm{s}\hat{L}(t)
\end{bmatrix}=0.
\end{equation}
In the presence of noise, however, such a strong convergence result is no longer possible in general. 
This motivates the following problem.

\begin{problem}\label{Pc}
	Consider the reference model \eqref{cr} and a set of filtered data \eqref{fisd} generated by the system \eqref{cs} and filters \eqref{xf}-\eqref{xdf}.
	Find functions $ F(\cdot), G_1(\cdot) $ and $ G_2(\cdot) $ such that, for any initial condition $x(0),x_\textnormal{m}(0) \in \mathbb{R}^n$ and \mbox{$\Psi(0)\in \mathbb{R}^{2n\times(n+p)}$}, the adaptive law
	\begin{align}\nonumber
	&\dot{\Psi}(t)=G_1\left( \Psi(t),X^\textnormal{D},X,U,x^\textnormal{df}(t),x^\textnormal{f}(t),u^\textnormal{f}(t),A_\textnormal{m},B_\textnormal{m}\right)\!,\\
	\label{al2}
	&
	\begin{bmatrix}
		\hat{K}(t) & \hat{L}(t)
	\end{bmatrix}=G_2\left( \Psi(t),X^\textnormal{D},X,U,x^\textnormal{df}(t),x^\textnormal{f}(t),u^\textnormal{f}(t)\right)\!,
	\end{align}
	is such that
	\begin{equation}\label{noisyconverge}
	\lim\limits_{t\to\infty}
	\begin{bmatrix}
		A_\textnormal{m}-A_\textnormal{s}-B_\textnormal{s} \hat{K}(t) &\ B_\textnormal{m}-B_\textnormal{s}\hat{L}(t)
	\end{bmatrix} = F(W).
	\end{equation}
	In addition, provide conditions under which the closed-loop system matrix \mbox{$ \lim\limits_{t\to\infty} A_\textnormal{s}+B_\textnormal{s}\hat{K}(t) $} is Hurwitz.
\end{problem}



We emphasize that the convergence in \eqref{noisyconverge} does not, by itself, guarantee closed-loop stability. 
In fact, even though $A_m$ is Hurwitz, the limit of $A_s + B_s \hat{K}(t)$ may not be, since the residual $F(W)$ may be nonzero.
Thus, in addition to a precise characterization of $F(W)$, Problem~\ref{Pc} requires conditions that link this residual to closed-loop stability.

\section{Main Results}

In this section, we present a solution to Problem~\ref{Pc}.
We first propose an adaptive law and characterize the limit parameter matching error between the closed-loop system and the reference model (Section~\ref{Spc}).
Then, we establish necessary and sufficient conditions under which given noise characteristics guarantee that the closed-loop system matrix is Hurwitz in the limit (Section~\ref{Ssa}).
Next, we analyze input-to-state stability of the tracking error dynamics (Section~\ref{Sba}).
Finally, we specialize our results to the noise-free setting (Section~\ref{Sscns}).

\subsection{Robust parameter convergence}
\label{Spc}

Consider $t_1,t_2,\dots,t_M$ satisfying $0=t_0<t_1<t_2<\dots<t_M<t_{M+1}=\infty$ with $M\in\mathbb{N}$,
and the data matrices
\begin{equation}\label{fisdM}
	\begin{aligned}
		X_j^\textrm{D}:=&\begin{bmatrix}
			X^\textrm{D}&x^\textrm{df}(t_1)& x^\textrm{df}(t_2)& \cdots& x^\textrm{df}(t_j)
		\end{bmatrix}\in\mathbb{R}^{n\times (N+j)},\\
		X_j:=&\begin{bmatrix}
			X&x^\textrm{f}(t_1)& x^\textrm{f}(t_2)& \cdots& x^\textrm{f}(t_j)
		\end{bmatrix}\in\mathbb{R}^{n\times (N+j)},\\
		U_j:=&\begin{bmatrix}
			U&u^\textrm{f}(t_1)& u^\textrm{f}(t_2)& \cdots& u^\textrm{f}(t_j)
		\end{bmatrix}\in\mathbb{R}^{m\times (N+j)},\\
		W_j:=&\begin{bmatrix}
			W&w^\textrm{f}(t_1)& w^\textrm{f}(t_2)& \cdots& w^\textrm{f}(t_j)
		\end{bmatrix}\in\mathbb{R}^{n\times (N+j)},
	\end{aligned}
\end{equation}
where $j\in\{1,2,\dots,M\}$ and $x^\textrm{df}$, $x^\textrm{f}$, $u^\textrm{f}$ and $w^\textrm{f}$ satisfy \eqref{xf}-\eqref{xdf}.
The data $X^D, X$ and $U$ were collected offline, as in Section \ref{SPF}, while the data at the times $t_1,t_2,\dots,t_j$ are collected online during operation of an adaptive controller, to be further specified below.
Similar to \eqref{fcsd}, it follows that
\begin{equation}\label{fccsd}
	X_j^\textrm{D}=A_\textrm{s}X_j+B_\textrm{s}U_j+W_j.
\end{equation}
Next, we construct two piecewise constant matrix-valued signals $\calU:\mathbb{R}_+\to\mathbb{R}^{m\times 2n}$ and $\calD:\mathbb{R}_+\to\mathbb{R}^{2n\times 2n}$
that will be used in the adaptive law. 
These matrix-valued signals are initialized as
\begin{equation}
	\calU(t)=
	\frac{1}{N} U
	\begin{bmatrix}
		X\\X^\textrm{D}
	\end{bmatrix}^{\!\top}\!, \quad
	\calD(t)=
	\frac{1}{N}
	\begin{bmatrix}
		X\\X^\textrm{D}
	\end{bmatrix}\!
	\begin{bmatrix}
		X\\X^\textrm{D}
	\end{bmatrix}^{\!\top}\!,
\end{equation}
for $t\in [0,t_{1})$, and are recursively updated as
\begin{align}
	\label{calU}
	&\calU(t)=
		\frac{N\!+\!j\!-\!1}{N\!+\!j}\calU(t_{j-1})+
		\frac{1}{N\!+\!j}
		u^\textrm{f}(t_j)
		\begin{bmatrix}
			x^\textrm{f}(t_j)\\x^\textrm{df}(t_j)
		\end{bmatrix}^{\!\top\!}, 
	\\
	\label{calD}
	&\calD(t)=
		\frac{N\!+\!j\!-\!1}{N\!+\!j}\calD(t_{j-1})+
		\frac{1}{N\!+\!j}
		\begin{bmatrix}
			x^\textrm{f}(t_j)\\x^\textrm{df}(t_j)
		\end{bmatrix}\!
		\begin{bmatrix}
			x^\textrm{f}(t_j)\\x^\textrm{df}(t_j)
		\end{bmatrix}^{\!\top\!},
\end{align}
for $t\in [t_{j},t_{j+1})$ with $j\in\{1,2,\dots,M\}$.
Hence, $\calU(t)$ and $\calD(t)$ can be updated with the real-time data generated online by the closed-loop system, for a control input to be designed.
We note that, for any $t\geq0$, $\calD(t)$ is positive semidefinite. Such sample covariance matrices play a key role in the data-driven control literature \cite{song2024role,chiuso2025harnessing,zhao2025regularization}.

We now consider the adaptive law
\begin{equation}\label{alp}
\dot{\Psi}(t)=
-\Gamma
\left(\calD(t)\Psi(t)-R_\textrm{m}\right),
\end{equation}
where $
\Psi(t)\in \mathbb{R}^{2n\times(n+p)}
$, $ \Gamma\in \mathbb{S}^{2n} $ is an adaptation rate satisfying $ \Gamma>0 $, and
\begin{equation*}
	R_\textrm{m}=
	\begin{bmatrix}
		I&0\\A_\textrm{m}&B_\textrm{m}
	\end{bmatrix}.
\end{equation*}
Next, we show that the adaptive law in \eqref{alp} leads to a solution to Problem~\ref{Pc}.
Denote
\begin{equation*}
		\bar{W}_0=\frac{1}{N}E_\textrm{s}W
		\begin{bmatrix}
			X\\X^\textrm{D}
		\end{bmatrix}^{\!\top} \text{and }
		\bar{W}_j=\frac{1}{N+j}E_\textrm{s}W_j
		\begin{bmatrix}
			X_j\\X_j^\textrm{D}
		\end{bmatrix}^{\!\top\!} 
\end{equation*}
with $j\in\{1,2,\dots,M\}$.
We provide the following convergence result.
\begin{theorem}\label{Tec}
	Consider the data in \eqref{fisdM} generated by \eqref{cs}, \eqref{xf}, \eqref{uf} and \eqref{xdf}, with the input
	\begin{equation}\label{input}
	u(t)=\calU(t)\Psi(t)
	\begin{bmatrix}
	x(t)\\r(t)
	\end{bmatrix},
	\end{equation}
	and $ \Psi(t) $ updated by the adaptive law \eqref{alp}.
	If
	\begin{equation}\label{image}
	\im R_\textnormal{m}\subseteq \im\calD(t_M),
	\end{equation}
	then for any initial conditions $ x(0) $, $ x_\textnormal{m}(0) $ and $ \Psi(0) $, the control gains
	\begin{equation}\label{gain}
	\begin{bmatrix}
	\hat{K}(t)&\hat{L}(t)
	\end{bmatrix}:=
	\calU(t)\Psi(t)
	\end{equation}
	are such that 
	\begin{equation}\label{lim}
	\lim\limits_{t\to\infty}\!
	\begin{bmatrix}
		A_\textnormal{m}\!-\!A_\textnormal{s}\!-\!B_\textnormal{s}\hat{K}(t) & B_\textnormal{m}\!-\!B_\textnormal{s}\hat{L}(t) 
	\end{bmatrix}\!
	= \bar{W}_M \calD(t_M)^\dagger R_\textnormal{m}.
	\end{equation}
\end{theorem}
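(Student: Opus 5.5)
The plan is to work only on $[t_M,\infty)$. On this interval $\calD(t)\equiv\calD_M:=\calD(t_M)$ and $\calU(t)\equiv\calU_M:=\calU(t_M)$ are constant. Before $t_M$, the adaptive law \eqref{alp} is a linear ODE with piecewise constant coefficients, so $\Psi$ has no finite escape. The closed loop \eqref{cs}, \eqref{input} is linear time-varying with bounded coefficients on compact intervals, so all signals are well defined and finite at $t_M$.

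First I would unroll the recursions \eqref{calU}--\eqref{calD}. Writing $Z:=\begin{bmatrix}X_M\\X_M^\textrm{D}\end{bmatrix}$, they give
\[
\calD_M=\tfrac{1}{N+M}ZZ^\top,\qquad \calU_M=\tfrac{1}{N+M}U_MZ^\top .
\]
Next I would re-express the matching error algebraically. Using $\begin{bmatrix}I&0\end{bmatrix}=\begin{bmatrix}I&0\end{bmatrix}R_\textrm{m}$ and $\begin{bmatrix}A_\textrm{m}&B_\textrm{m}\end{bmatrix}=\begin{bmatrix}0&I\end{bmatrix}R_\textrm{m}$, the error equals $\begin{bmatrix}-A_\textrm{s}&I\end{bmatrix}R_\textrm{m}-B_\textrm{s}\calU_M\Psi(t)$. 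By \eqref{fccsd} (with the noise term $E_\textrm{s}W_M$),
\[
B_\textrm{s}U_M=X_M^\textrm{D}-A_\textrm{s}X_M-E_\textrm{s}W_M=\begin{bmatrix}-A_\textrm{s}&I\end{bmatrix}Z-E_\textrm{s}W_M,
\]
so $B_\textrm{s}\calU_M=\begin{bmatrix}-A_\textrm{s}&I\end{bmatrix}\calD_M-\bar W_M$. Hence the error is
\[
\begin{bmatrix}-A_\textrm{s}&I\end{bmatrix}\bigl(R_\textrm{m}-\calD_M\Psi(t)\bigr)+\bar W_M\Psi(t).
\]

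The core step is the convergence of the adaptive law. By \eqref{image}, $\Psi^\star:=\calD_M^\dagger R_\textrm{m}$ satisfies $\calD_M\Psi^\star=R_\textrm{m}$. Setting $\tilde\Psi=\Psi-\Psi^\star$ gives $\dot{\tilde\Psi}=-\Gamma\calD_M\tilde\Psi$. I would take $V=\operatorname{tr}(\tilde\Psi^\top\calD_M\tilde\Psi)$, so that $\dot V=-2\operatorname{tr}(\tilde\Psi^\top\calD_M\Gamma\calD_M\tilde\Psi)$. Let $\lambda_+$ be the smallest positive eigenvalue of $\calD_M$. Since $\calD_M\Gamma\calD_M\ge\lambda_{\min}(\Gamma)\calD_M^2\ge\lambda_{\min}(\Gamma)\lambda_+\calD_M$, we get $\dot V\le-2\lambda_{\min}(\Gamma)\lambda_+V$, so $V\to0$ exponentially. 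Because $\|\calD_M\tilde\Psi\|^2\le\lambda_{\max}(\calD_M)V$, it follows that $\calD_M\Psi(t)\to R_\textrm{m}$ exponentially.

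Then $\dot\Psi=-\Gamma\calD_M\tilde\Psi$ is exponentially integrable, so $\Psi(t)\to\Psi_\infty$ for some $\Psi_\infty$ with $\calD_M\Psi_\infty=R_\textrm{m}$. Note that $\Psi_\infty$ need not equal $\Psi^\star$: it may differ by a component in $\ker\calD_M$ that depends on $\Psi(0)$.

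Finally I would resolve this ambiguity for the noise term. The first term of the error tends to zero because $\calD_M\Psi(t)\to R_\textrm{m}$. For the second term, $\ker\calD_M=\ker Z^\top$, and $\bar W_M=\tfrac1{N+M}E_\textrm{s}W_MZ^\top$ annihilates $\ker Z^\top$. Since $\calD_M(\Psi_\infty-\Psi^\star)=0$, this gives $\bar W_M\Psi_\infty=\bar W_M\calD_M^\dagger R_\textrm{m}$, which yields \eqref{lim}.

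I expect the main obstacle to be this non-uniqueness of the limit of $\Psi$. The Lyapunov argument must be phrased through $\calD_M\tilde\Psi$ rather than $\tilde\Psi$, since $\Gamma\calD_M$ is only semidefinite-like and its kernel part does not decay. The kernel identity $\ker\calD_M=\ker Z^\top$ is what makes the limit error independent of $\Psi(0)$.
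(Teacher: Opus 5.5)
Your proposal is correct and follows essentially the same route as the paper. Both use the Lyapunov function $\mathrm{tr}(\tilde\Psi^\top\calD(t_M)\tilde\Psi)$ with exponential decay on $[t_M,\infty)$, and both substitute $\calU(t_M)=\frac{1}{N+M}U_M Z^\top$ and the data equation to isolate $\bar W_M\calD(t_M)^\dagger R_\textrm{m}$.

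The one difference is at the end. You prove that $\Psi(t)$ converges and then use $\ker\calD(t_M)=\ker Z^\top$. The paper instead works directly with $Z^\top\Psi(t)\to Z^\top\calD(t_M)^\dagger R_\textrm{m}$, which already follows from $V\to 0$. Since the gains and $\bar W_M\Psi(t)$ depend on $\Psi$ only through $Z^\top\Psi$, convergence of $\Psi$ itself is never needed, and the non-uniqueness you flag as the main obstacle does not arise.
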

\begin{proof}
	Denote
	$
	\tilde{\Psi}(t)=\Psi(t)-\calD(t)^\dagger R_\textrm{m},
	$
	and consider the function
	\begin{equation}\label{lf}
	V(t) = \frac{1}{2}\text{tr} \left(\tilde{\Psi}(t)^{\!\top}\calD(t)\tilde{\Psi}(t)\right) \geq 0,
	\end{equation}
	where $\mathcal{D}(t)$ is positive semidefinite and the inequality in \eqref{lf} holds with equality if and only if $ \tilde{\Psi}(t)\in\ker\calD(t) $.
	By \eqref{alp} and \eqref{image}, we have that for $t\geq t_{M}$,
	\begin{equation*}
	\dot{V}(t)=-\text{tr}\left(
	\tilde{\Psi}(t)^{\!\top}
	\calD(t_M)\Gamma\calD(t_M)
	\tilde{\Psi}(t)\right).
	\end{equation*}
	Since $ \Gamma>0 $, there exists a sufficiently small scalar $ \alpha>0 $ such that \mbox{$ 2\Gamma-\alpha\calD(t_M)^\dagger\geq0 $}.
	Then, 
	\begin{align*}
		&2\calD(t_M)\Gamma\calD(t_M)-\alpha\calD(t_M)\calD(t_M)^\dagger\calD(t_M)\\
		=\ &2\calD(t_M)\Gamma\calD(t_M)-\alpha\calD(t_M) \geq 0, 
	\end{align*}
	implying that
	\begin{equation}\label{V}
	\dot{V}(t) \leq -\alpha V(t) \quad \forall t\geq t_M.
	\end{equation}
	Hence, $ \lim\limits_{t\to\infty}\mathcal{D}(t) \tilde{\Psi}(t) = 0 $.
	Note that by \eqref{calD}, we have
	\begin{equation*}
		\calD(t_M)=\frac{1}{N+M}
		\begin{bmatrix}
			X_M\\X_M^\textrm{D}
			\end{bmatrix}
		\begin{bmatrix}
			X_M\\X_M^\textrm{D}
		\end{bmatrix}^{\!\top\!},
	\end{equation*}
	implying that
\begin{equation}\label{limPsi}
	\lim\limits_{t\to\infty}\begin{bmatrix}
	X_M\\X_M^\textrm{D}
	\end{bmatrix}^{\!\top\!}\Psi(t)=
	\begin{bmatrix}
	X_M\\X_M^\textrm{D}
	\end{bmatrix}^{\!\top\!}\calD_M^\dagger	R_\textrm{m}.
\end{equation}
	Then, \eqref{limPsi} and \eqref{image} give
	\begin{equation}\label{limRm}
	\lim\limits_{t\to\infty}\calD(t_M)\Psi(t)=\calD(t_M)\calD(t_M)^\dagger R_\textrm{m}=R_\textrm{m}.
	\end{equation}
	Note that by \eqref{calU}, we have
	\begin{equation*}
		\calU(t_M)=\frac{1}{N+M}U_M
		\begin{bmatrix}
			X_M\\X_M^\textrm{D}
		\end{bmatrix}^{\!\top\!}.
	\end{equation*}
	Then, \eqref{fccsd} and \eqref{gain} give
\begin{align*}
	& \lim\limits_{t\to\infty}
	\begin{bmatrix}
	A_\textrm{m}\!-\!A_\textrm{s}\!-\!B_\textrm{s}\hat{K}(t) & B_\textrm{m}\!-\!B_\textrm{s}\hat{L}(t)
	\end{bmatrix}\\
	=&	\begin{bmatrix}
	A_\textrm{m}\!-\!A_\textrm{s}& B_\textrm{m}
	\end{bmatrix}\!-
	\frac{1}{N\!+\!M}B_\textrm{s}U_M\lim\limits_{t\to\infty}
	\begin{bmatrix}
	X_M\\X_M^\textrm{D}
	\end{bmatrix}^{\!\top\!} \Psi(t)
\end{align*}
\begin{align*}
	=&	\begin{bmatrix}
	A_\textrm{m}\!-\!A_\textrm{s}& B_\textrm{m}
	\end{bmatrix}\!-\\
	&\frac{1}{N\!+\!M}
	(X^\textrm{D}_M\!-\!A_\textrm{s}X_M\!-\!E_\textrm{s}W_M)
	\begin{bmatrix}
	X_M\\X_M^\textrm{D}
	\end{bmatrix}^{\!\top\!}
	\calD(t_M)^\dagger R_\textrm{m}
	\\
	=&	\begin{bmatrix}
	A_\textrm{m}\!-\!A_\textrm{s}& B_\textrm{m}
	\end{bmatrix}\!-\!
	\begin{bmatrix}
	A_\textrm{m}& B_\textrm{m}
	\end{bmatrix}\!+\!
	\begin{bmatrix}
	A_\textrm{s}& 0
	\end{bmatrix}\!+
	\bar{W}_M\calD(t_M)^\dagger R_\textrm{m}\\
	=&\ \bar{W}_M\calD(t_M)^\dagger R_\textrm{m},
\end{align*}
	which concludes the proof.
\end{proof}


The validity of Theorem~\ref{Tec} relies on the assumption in \eqref{image}.
To further discuss this assumption, the following result provides sufficient conditions under which \eqref{image} holds.
\begin{lemma}\label{Limage}
	If there exists a solution $(K,L)$ to the matching equations \eqref{mc}, and
	\begin{equation}\label{ranknmq}
	\rank \begin{bmatrix}
	X_M\\U_M\\W_M
	\end{bmatrix}=n+m+q,
	\end{equation}
	then \eqref{image} holds.
\end{lemma}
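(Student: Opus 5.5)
Since $\calD(t_M)=\frac{1}{N+M}Z Z^{\top}$ with $Z:=\begin{bmatrix} X_M\\ X_M^\textrm{D}\end{bmatrix}$, we have $\im\calD(t_M)=\im Z$. The plan is therefore to show $\im R_\textrm{m}\subseteq \im Z$, that is, to find a matrix $\Theta\in\mathbb{R}^{(N+M)\times(n+p)}$ with $Z\Theta = R_\textrm{m}$. Equivalently, we need $X_M\Theta=\begin{bmatrix} I & 0\end{bmatrix}$ and $X_M^\textrm{D}\Theta=\begin{bmatrix} A_\textrm{m} & B_\textrm{m}\end{bmatrix}$.

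To construct $\Theta$, we use the data equation \eqref{fccsd} in the form $X_M^\textrm{D}=A_\textrm{s}X_M+B_\textrm{s}U_M+E_\textrm{s}W_M$; the $E_\textrm{s}$ factor is consistent with \eqref{fcsd}. By the rank condition \eqref{ranknmq}, the stacked matrix $\begin{bmatrix} X_M^{\top} & U_M^{\top} & W_M^{\top}\end{bmatrix}^{\top}$ has full row rank. Hence for any right-hand side there is a $\Theta$ with
\begin{equation*}
\begin{bmatrix} X_M\\ U_M\\ W_M\end{bmatrix}\Theta=\begin{bmatrix} I & 0\\ K & L\\ 0 & 0\end{bmatrix},
\end{equation*}
where $(K,L)$ is the assumed solution of \eqref{mc}. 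With this choice $X_M\Theta=\begin{bmatrix} I & 0\end{bmatrix}$ holds directly. Moreover,
\begin{equation*}
X_M^\textrm{D}\Theta=A_\textrm{s}\begin{bmatrix} I & 0\end{bmatrix}+B_\textrm{s}\begin{bmatrix} K & L\end{bmatrix}+E_\textrm{s}\cdot 0=\begin{bmatrix} A_\textrm{s}+B_\textrm{s}K & B_\textrm{s}L\end{bmatrix}=\begin{bmatrix} A_\textrm{m} & B_\textrm{m}\end{bmatrix},
\end{equation*}
where the last equality is exactly the matching equations \eqref{mc}. Therefore $Z\Theta=R_\textrm{m}$, so $\im R_\textrm{m}\subseteq\im Z=\im\calD(t_M)$, which is \eqref{image}.

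The only real subtlety is the step $\im(ZZ^{\top})=\im Z$. This is standard: $\ker(ZZ^{\top})=\ker Z^{\top}$, because $ZZ^{\top}v=0$ implies $|Z^{\top}v|^2=0$, and taking orthogonal complements gives the equality of images. Otherwise the argument is a direct linear-algebra construction. The key design choice is to set the $W_M$-component of the right-hand side to zero, so that the unknown noise drops out of $X_M^\textrm{D}\Theta$.
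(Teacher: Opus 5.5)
Your proof is correct and is essentially the paper's argument. The paper chains $\im\calD(t_M)=\im\begin{bmatrix} X_M\\ X_M^\textrm{D}\end{bmatrix}=\im\begin{bmatrix} I&0&0\\ A_\textrm{s}&B_\textrm{s}&E_\textrm{s}\end{bmatrix}$ (using the full-row-rank condition) with $\im R_\textrm{m}\subseteq\im\begin{bmatrix} I&0\\ A_\textrm{s}&B_\textrm{s}\end{bmatrix}$ (using the matching equations), whereas you exhibit the corresponding preimage $\Theta$ explicitly; your reading of the data equation with the factor $E_\textrm{s}W_M$ is also the right one, since \eqref{fccsd} as printed omits $E_\textrm{s}$.
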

\begin{proof}
	By \eqref{ranknmq}, the following holds
	\begin{align*}
	\im \calD(t_M) & = 
	\im \begin{bmatrix}
		X_M\\X_M^\textrm{D}
	\end{bmatrix} =
	\im \begin{bmatrix}
	I&0&0\\A_\textrm{s}&B_\textrm{s}&E_\textrm{s}
	\end{bmatrix}
	\begin{bmatrix}
	X_M\\U_M\\W_M
	\end{bmatrix}
	\\
	& = 
	\im \begin{bmatrix}
	I&0&0\\A_\textrm{s}&B_\textrm{s}&E_\textrm{s}
	\end{bmatrix}.
	\end{align*}
	Since the matching equations \eqref{mc} hold for some $ K $ and $ L $, we have
	\begin{equation}\label{imageABE}
	\im R_\textrm{m}
	\subseteq
	\im \begin{bmatrix}
	I&0\\A_\textrm{s}&B_\textrm{s}
	\end{bmatrix}
	\subseteq
	\im \begin{bmatrix}
	I&0&0\\A_\textrm{s}&B_\textrm{s}&E_\textrm{s}
	\end{bmatrix}
	=
	\im \calD(t_M),
	\end{equation}
	that is, \eqref{image} holds.
\end{proof}

A few remarks follow.


\begin{remark}
	The feasibility of achieving the rank condition \eqref{ranknmq} has been analyzed and discussed for discrete-time systems in \cite{naveen2024data}.
	To relate \eqref{ranknmq} to existing excitation notions in continuous-time adaptive control, we revisit a few standard definitions.
	A signal $\bm{v}(t)$ is said to have finite excitation \cite{cho2018composite} if there exist $\tau\geq0$, $\delta>0$, and $T>0$ such that
	\vspace{0.2ex}
	\begin{equation}\label{fe}
		\int_{\tau}^{\tau+T}
		\bm{v}(t)\bm{v}(t)^{\!\top}dt \geq \delta I,
	\end{equation}
	and is said to have persistent excitation if there exist $\delta>0$ and $T>0$ such that
	\begin{equation}\label{pe}
		\int_{\tau}^{\tau+T}
		\bm{v}(t)\bm{v}(t)^{\!\top}dt \geq \delta I,
		\quad \forall \tau\geq0.
	\end{equation}
	We note that \eqref{ranknmq} implies finite excitation of the signal
	\begin{equation}\label{v}
		\bm{v}=\left[ 
		\begin{aligned}
			x^\textnormal{f\ }\\
			u^\textnormal{f\ }\\
			w^\textnormal{f\ }
		\end{aligned}
		\right],
	\end{equation}
	but not persistent excitation. 
	Furthermore, even when $\bm{v}$ in \eqref{v} is not persistently exciting, condition \eqref{ranknmq} may still hold.
\end{remark}

\begin{remark}
	We note that Theorem~\ref{Tec} does not require the matching equations \eqref{mc} to hold.
	In fact, even when the matching equations \eqref{mc} fail to hold, \eqref{image} can still be satisfied for a wide range of scenarios.
	Indeed, \eqref{imageABE} reveals that in the case that \eqref{ranknmq} holds, the noise matrix $ E_\textnormal{s} $ can contribute to satisfy the image condition \eqref{image}.
	For example, in the special case that $ \rank(E_\textnormal{s})=n $, \eqref{ranknmq} implies
	$$
	\im R_\textnormal{m} \subseteq \mathbb{R}^{2n}
	\!=
	\im\! \begin{bmatrix}
	I&0&0\\A_\textnormal{s}&B_\textnormal{s}&E_\textnormal{s}
	\end{bmatrix}
	\!=
	\im\! \begin{bmatrix}
	X_M\\X_M^\textnormal{D}
	\end{bmatrix}
	\!=
	\im \calD(t_M),
	$$ 
	that is, \eqref{image} holds.
\end{remark}

%
%
%
%
%

\subsection{Stability analysis}
\label{Ssa}

Theorem~\ref{Tec} accomplishes the design objective i) of Problem~\ref{Pc}.
Addressing the stability objective ii) constitutes the main focus of this section.
Before delving into the analysis, let us recall the Kalman-Yakubovich-Popov (KYP) lemma \cite[Lemma 2.11]{scherer2000linear}.
\begin{lemma}\label{LKYP}
	Let 
	$$ \calA\in\mathbb{R}^{r\times r}, \ \calB\in\mathbb{R}^{r\times s},\  \calM:=
	\begin{bmatrix}
	\calM_{11}&\calM_{12}\\\calM_{21}&\calM_{22}
	\end{bmatrix}\in\mathbb{S}^{r+s} $$
	 be given.
	There exists $ P\in \mathbb{S}^r $ such that
	\begin{equation}
	\begin{bmatrix}
	I&0\\\calA&\calB
	\end{bmatrix}^{\!\top\!}
	\begin{bmatrix}
	0&P \\ P&0
	\end{bmatrix}
	\begin{bmatrix}
	I&0\\\calA&\calB
	\end{bmatrix}
	+\calM<0
	\end{equation} 
	if and only if $ \calM_{22}\!<\!0 $ and, for all $ \omega \!\in\! \mathbb{R} $ and $ v\!\in\!\mathbb{C}^{r+s}\backslash\{0\} $,
	\begin{equation}\label{iomega}
	\begin{bmatrix}
	\calA-i\omega I & \calB
	\end{bmatrix}v=0
	\Rightarrow
	v^*\calM v<0,
	\end{equation}
	where $ i $ is the imaginary unit such that $ i^2=-1 $.
\end{lemma}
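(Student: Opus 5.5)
The plan is to prove the two implications separately. Necessity is a direct computation. Sufficiency goes through a theorem of alternatives for the strict LMI together with a rank-one decomposition of the dual certificate, which I expect to be the main obstacle. Throughout, write $F:=\begin{bmatrix} I & 0\end{bmatrix}$ and $G:=\begin{bmatrix}\calA & \calB\end{bmatrix}$. A direct expansion gives
\[
\begin{bmatrix} F\\ G\end{bmatrix}^{\!\top}\!\begin{bmatrix}0&P\\P&0\end{bmatrix}\!\begin{bmatrix} F\\ G\end{bmatrix}=F^\top P G+G^\top P F=\begin{bmatrix}\calA^\top P+P\calA & P\calB\\ \calB^\top P & 0\end{bmatrix}.
\]

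\emph{Necessity.} Suppose the LMI holds for some $P$.
\begin{itemize}
\item The $(2,2)$ block of the expansion above is zero, so the $(2,2)$ block of the LMI is exactly $\calM_{22}<0$.
\item Now take $v=(x,u)\neq 0$ with $\calA x+\calB u=i\omega x$, i.e.\ $Gv=i\omega Fv$. Then
\[
v^*(F^\top PG+G^\top PF)v=x^*P(i\omega x)+(i\omega x)^*Px=i\omega\, x^*Px-i\omega\, x^*Px=0.
\]
\item Since the LMI is strict, $v^*\calM v<0$, which is \eqref{iomega}.
\end{itemize}

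\emph{Sufficiency.} I would argue by contraposition. Assume no $P\in\mathbb{S}^r$ satisfies the LMI.
\begin{itemize}
\item The set $\{F^\top PG+G^\top PF+\calM : P\in\mathbb{S}^r\}$ is an affine subspace that does not meet the open cone of negative definite matrices.
\item By separation of convex sets, there is a nonzero $Z\ge 0$ with $\operatorname{tr}(\calM Z)\ge 0$ and $\operatorname{tr}\big((F^\top PG+G^\top PF)Z\big)=0$ for all $P\in\mathbb{S}^r$.
\item The second condition is equivalent to $FZG^\top+GZF^\top=0$.
\item I would work over $\mathbb{C}$ from the start, i.e.\ with $Z\in\mathbb{H}^{r+s}$, which is harmless because the conditions are real-linear in $Z$.
\end{itemize}

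The key step is to decompose such a $Z$ into rank-one pieces, each of which is killed by the frequency hypothesis.
\begin{itemize}
\item Factor $Z=VV^*$ and set $X:=FV$, $Y:=GV$. The constraint reads $XY^*+YX^*=0$.
\item Equivalently, $(X+Y)(X+Y)^*=(X-Y)(X-Y)^*$. Hence there is a unitary $\mathcal{W}$ with $X+Y=(X-Y)\mathcal{W}$.
\item Diagonalize $\mathcal{W}=\sum_k e^{i\theta_k}u_ku_k^*$ with orthonormal $u_k$, and put $v_k:=Vu_k$. Then $Z=\sum_k v_kv_k^*$ and $(F+G)v_k=e^{i\theta_k}(F-G)v_k$ for each $k$.
\item If $e^{i\theta_k}\neq -1$, this rearranges to $Gv_k=\frac{e^{i\theta_k}-1}{e^{i\theta_k}+1}Fv_k=i\omega_kFv_k$ with $\omega_k=\tan(\theta_k/2)\in\mathbb{R}$ (Cayley transform). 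Thus $\begin{bmatrix}\calA-i\omega_kI&\calB\end{bmatrix}v_k=0$.
\item If $e^{i\theta_k}=-1$, the relation forces $Fv_k=0$, so $v_k=(0,u)$ and $v_k^*\calM v_k=u^*\calM_{22}u$.
\end{itemize}

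Under the hypotheses, every nonzero $v_k$ then satisfies $v_k^*\calM v_k<0$: the first case uses \eqref{iomega}, and the second uses $\calM_{22}<0$. Since $Z\neq0$, at least one $v_k$ is nonzero, so $\operatorname{tr}(\calM Z)=\sum_k v_k^*\calM v_k<0$. This contradicts $\operatorname{tr}(\calM Z)\ge 0$.

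The delicate points are the unitary factorization in the key step and the bookkeeping of the case $e^{i\theta_k}=-1$, which is exactly where $\calM_{22}<0$ enters as the ``$\omega=\infty$'' frequency.
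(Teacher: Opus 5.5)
Your proof is correct. The paper does not prove this lemma at all. It quotes it from Scherer and Weiland \cite{scherer2000linear} and uses it as a black box in Lemma~\ref{Lstability}, with $\calA=0$, $\calB=I$, $\calM=\Pi(Q,R)$. So your argument supplies a proof the paper omits rather than competing with one.

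What you wrote is the classical duality proof of the strict KYP lemma, essentially Rantzer's argument. You separate the affine set of LMI values from the open cone of negative definite matrices. You then split the dual certificate $Z$ into rank-one terms $v_kv_k^*$, using the unitary $\mathcal{W}$ with $X+Y=(X-Y)\mathcal{W}$, and map its eigenvalues to real frequencies by the Cayley transform. I checked each step: the block expansion, the necessity computation, the separation yielding $Z\ge 0$ with $\operatorname{tr}(\calM Z)\ge 0$ and $FZG^\top+GZF^\top=0$, the unitary factorization (the standard SVD argument), and the two eigenvalue cases. All are sound.

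Relative to the citation, your argument makes the result self-contained. It also shows why the strict version needs no controllability or imaginary-axis assumption on $(\calA,\calB)$, and why $\calM_{22}<0$ is exactly the ``$\omega=\infty$'' frequency, i.e.\ the eigenvalue $-1$ of $\mathcal{W}$. The citation buys only brevity.

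One wording point concerns the passage to $\mathbb{C}$, which should come after the separation. Your first bullets already do this in effect: the real symmetric certificate obtained in $\mathbb{S}^{r+s}$ is Hermitian and satisfies $FZG^\top+GZF^\top=0$, and only then is it factored over $\mathbb{C}$. If you instead separated in $\mathbb{H}^{r+s}$ while keeping $P$ real, orthogonality to all real symmetric $P$ would only force the real part of $FZG^*+GZF^*$ to vanish. You would then have to replace $Z$ by $\operatorname{Re}Z$ before factoring, which preserves all the other properties.
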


A few auxiliary results are now given. 
To this purpose, let us define, for any $ Q\in \mathbb{R}^{n\times n} $, $ R\in \mathbb{R}^{n\times n} $ and $ c\in \mathbb{C} $,
\begin{equation}\label{Lambda}
\Lambda(Q,R,c):=Q-(R-c I)^*(R-c I).
\end{equation}
The following result allows to conclude non-singularity of \eqref{Lambda} for all complex numbers $c$ on the imaginary axis.

\begin{lemma}\label{imcirc}
	$ \Lambda(Q,R,i\omega) $ is non-singular for all $ \omega\in \mathbb{R} $ if and only if 
	\begin{equation*}
	\Theta(Q,R) := \begin{bmatrix}
	0&I\\
	R^\top R-Q&R- R^\top
	\end{bmatrix}
	\end{equation*}
	has no eigenvalues on the imaginary axis.
\end{lemma}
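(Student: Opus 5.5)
The plan is to reduce singularity of $\Lambda(Q,R,i\omega)$ to the existence of an imaginary eigenvalue of $\Theta(Q,R)$ by a companion-type linearization. Expanding the definition gives
\begin{equation*}
\Lambda(Q,R,i\omega)=Q-(R^\top+i\omega I)(R-i\omega I)=Q-R^\top R+i\omega(R^\top-R)-\omega^2 I .
\end{equation*}
Setting $\lambda=i\omega$, so that $-\omega^2=\lambda^2$, this becomes
\begin{equation*}
\Lambda(Q,R,i\omega)=\lambda^2 I-\lambda(R-R^\top)-(R^\top R-Q),
\end{equation*}
which is a monic quadratic matrix polynomial $P(\lambda)$ evaluated at $\lambda=i\omega$. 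The matrix $\Theta(Q,R)$ is exactly the block companion matrix of $P$: the eigenvalue equation $\Theta\begin{bmatrix}v_1\\ v_2\end{bmatrix}=\lambda\begin{bmatrix}v_1\\ v_2\end{bmatrix}$ reads $v_2=\lambda v_1$ and $(R^\top R-Q)v_1+(R-R^\top)v_2=\lambda v_2$. Substituting the first into the second gives $P(\lambda)v_1=0$.

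Both directions then follow from this correspondence. If $\Lambda(Q,R,i\omega_0)$ is singular for some real $\omega_0$, take a nonzero $v_1\in\ker \Lambda(Q,R,i\omega_0)$ and set $v_2=i\omega_0 v_1$. Then $\begin{bmatrix}v_1\\ v_2\end{bmatrix}$ is a nonzero eigenvector of $\Theta$ with eigenvalue $i\omega_0$. Conversely, suppose $\Theta$ has an eigenvalue $i\omega_0$ with eigenvector $\begin{bmatrix}v_1\\ v_2\end{bmatrix}\neq 0$. We must have $v_1\neq0$, since $v_1=0$ forces $v_2=\lambda v_1=0$. Hence $\Lambda(Q,R,i\omega_0)v_1=P(i\omega_0)v_1=0$, and $\Lambda(Q,R,i\omega_0)$ is singular. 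Taking contrapositives of these two implications gives the stated equivalence.

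As a cleaner alternative, one can use the determinant identity $\det(\lambda I-\Theta)=\det\!\big(\lambda^2 I-\lambda(R-R^\top)-(R^\top R-Q)\big)$, obtained by a Schur complement on the $(1,1)$ block $\lambda I$ for $\lambda\neq0$ and extended to $\lambda=0$ by continuity. No step is a real obstacle. The only care needed is the sign bookkeeping when expanding $(R-i\omega I)^*(R-i\omega I)$: the conjugate transpose turns $-i\omega$ into $+i\omega$, which produces the term $i\omega(R^\top-R)$. That term must match the $(2,2)$ block $R-R^\top$ of $\Theta$ with the correct sign once rewritten in terms of $\lambda=i\omega$.
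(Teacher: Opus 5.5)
Your proof is correct and follows the paper's argument. The paper likewise sets $v_2=i\omega v_1$ to pass between kernel vectors of $\Lambda(Q,R,i\omega)$ and eigenvectors of $\Theta(Q,R)$ with eigenvalue $i\omega$, though it proves each direction directly rather than through contrapositives; your sign bookkeeping and the optional identity $\det(\lambda I-\Theta)=\det(\lambda^2 I-\lambda(R-R^\top)-(R^\top R-Q))$ are also correct.
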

\begin{proof}
	\emph{Sufficiency}:\\
	Suppose that $ \Theta(Q,R) $ has no eigenvalues on the imaginary axis.
	For any $ \omega\in \mathbb{R} $, let $ v_1\in\mathbb{C}^n $ be such that $ \Lambda(Q,R,i\omega)v_1=0 $.
	Denote $ v_2:=i\omega v_1 $.
	Then, 
	\begin{align*}
	\Lambda(Q,R,i\omega)v_1
	&=\left( -\omega^2I - i\omega(R\!-\!R^\top) + Q-R^{\top\!}R\right) v_1\\
	&=i\omega v_2-(R\!-\!R^\top)v_2-(R^{\top\!} R\!-\!Q)v_1=0,
	\end{align*}
	implying that 
	\begin{equation}\label{lambda}
	(R^\top R-Q)v_1+(R-R^\top)v_2=i\omega v_2.
	\end{equation}
	Denote $ v=[v_1^\top,v_2^\top]^\top $.
	Then, by $ v_2=i\omega v_1 $ and \eqref{lambda}, we have $ \Theta(Q,R) v=i\omega v $.
	Since $ \Theta(Q,R) $ has no eigenvalue on the imaginary axis, we have $ v=0 $, and hence $ v_1=0 $.
	Then, it follows that $ \Lambda(Q,R,i\omega) $ is non-singular.
	Since $ \omega $ can take any value in $ \mathbb{R} $, we conclude that $ \Lambda(Q,R,i\omega) $ is non-singular for all $ \omega\in \mathbb{R} $.
	
	\emph{Necessity}:\\
	Suppose that $ \Lambda(Q,R,i\omega) $ is non-singular for all $ \omega\in \mathbb{R} $.
	For any $ \omega\in \mathbb{R} $, let $ v\in\mathbb{C}^{2n} $ be such that $ \Theta(Q,R) v=i\omega v $.
	Denote $ v=[v_1^\top,v_2^\top]^\top $ with $ v_1,v_2\in\mathbb{C}^n $.
	Then, we have $ v_2=i\omega v_1 $ and \eqref{lambda} holds.
	By substituting $ v_2=i\omega v_1 $ into \eqref{lambda}, we obtain
	$$
	0=(-\omega^2I - i\omega(R-R^\top) + Q-R^\top R)v_1=\Lambda(Q,R,i\omega)v_1.
	$$
	Since $ \Lambda(Q,R,i\omega) $ is non-singular for all $ \omega\in \mathbb{R} $, it follows that $ v_1=0 $, and hence $ v=0 $.
	Therefore, $ i\omega $ is not an eigenvalue of $ \Theta(Q,R) $.
	Since $ \omega $ can take any value in $ \mathbb{R} $, we conclude that $ \Theta(Q,R) $ has no eigenvalues on the imaginary axis.
\end{proof}

We then recall a result about QMIs.
Define the set
\begin{equation*}
\mathcal{Z}_{r}(\Pi):=\left\{Z\in \mathbb{R}^{r \times s}\ \Bigg| \begin{bmatrix}
I\\Z
\end{bmatrix}^{\top}\Pi
\begin{bmatrix}
I\\Z
\end{bmatrix}\geq0\right\},
\end{equation*}
where 
\begin{equation}\label{pi}
\Pi=\begin{bmatrix}
\Pi_{11}&\Pi_{12}\\
\Pi_{21}&\Pi_{22}
\end{bmatrix}\in \mathbb{S}^{s+r}.
\end{equation}
Denote the (generalized) Schur complement of $\Pi$ with respect to $\Pi_{22}$ as $ \Pi|\Pi_{22}=\Pi_{11}-\Pi_{12}\Pi^{\dagger}_{22}\Pi_{21} $.
Then, we define the set
\begin{equation*}
\mathbf{\Pi}_{s,r}\!=\!\left\{\Pi\!\in\!\mathbb{S}^{s+r}\!\mid\!\Pi_{22}\!\leq\! 0, \ker\Pi_{22}\!\subseteq\!\ker\Pi_{12}, \Pi|\Pi_{22}\!\geq\!0\right\}\!.
\end{equation*}
We state the following result instrumental to bridge the two sets we just defined.
\begin{lemma}\label{LZ}
	(\cite[Lemma 4.5]{van2023quadratic}).
	Let $ \calM\in \mathbf{\Pi}_{s,r} $. 
	Let \mbox{$ x\in \mathbb{R}^s $} and $ y\in \mathbb{R}^r $ be vectors, with $ x $ nonzero, such that 
	$$ \begin{bmatrix}
	x\\y
	\end{bmatrix}^{\!\top\!} M \begin{bmatrix}
	x\\y
	\end{bmatrix}\geq0. $$
	Then, there exists
	$
	Z \in \mathcal{Z}_{r}(\calM) $ 
	such that $ y= Zx.
	$
\end{lemma}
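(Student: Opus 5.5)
The plan is to reduce the quadratic matrix inequality to a completed-square form using the three structural properties of $\mathbf{\Pi}_{s,r}$. Then I would build $Z$ explicitly as a rank-one correction of a "central" solution. Throughout, write $\calM_{11},\calM_{12},\calM_{21},\calM_{22}$ for the blocks of $\calM$ partitioned as in \eqref{pi}, and $S:=\calM|\calM_{22}=\calM_{11}-\calM_{12}\calM_{22}^\dagger\calM_{21}\geq 0$.

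\textbf{Step 1 (completion of squares).} Since $\calM_{22}$ is symmetric, $\calM_{22}\calM_{22}^\dagger$ is the orthogonal projector onto $\im\calM_{22}=(\ker\calM_{22})^\perp$. The condition $\ker\calM_{22}\subseteq\ker\calM_{12}$ therefore gives $\calM_{12}\calM_{22}^\dagger\calM_{22}=\calM_{12}$. Set $Z_0:=-\calM_{22}^\dagger\calM_{21}$. A direct expansion, using this identity, shows that for every $Z\in\mathbb{R}^{r\times s}$
\begin{equation*}
\begin{bmatrix} I\\ Z\end{bmatrix}^{\!\top}\!\calM\begin{bmatrix} I\\ Z\end{bmatrix}
= S+(Z-Z_0)^\top\calM_{22}(Z-Z_0).
\end{equation*}
The same identity applied to vectors gives
\begin{equation*}
\begin{bmatrix} x\\ y\end{bmatrix}^{\!\top}\!\calM\begin{bmatrix} x\\ y\end{bmatrix}=x^\top Sx+d^\top\calM_{22}d,
\qquad d:=y-Z_0x .
\end{equation*}
So the hypothesis reads $d^\top(-\calM_{22})d\leq x^\top Sx$.

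\textbf{Step 2 (construction of $Z$).} I look for $Z=Z_0+\Delta$ with $\Delta x=d$, which makes $y=Zx$ automatic. Membership $Z\in\mathcal{Z}_r(\calM)$ then reduces to $\Delta^\top(-\calM_{22})\Delta\leq S$. I split into two cases.
\begin{itemize}
\item If $x^\top Sx>0$, take $\Delta:=d\,x^\top S/(x^\top Sx)$. Then $\Delta x=d$, and
\begin{equation*}
\Delta^\top(-\calM_{22})\Delta=\frac{d^\top(-\calM_{22})d}{(x^\top Sx)^2}\,Sxx^\top S\leq\frac{Sxx^\top S}{x^\top Sx}\leq S .
\end{equation*}
The last inequality is the standard Cauchy--Schwarz fact for $S\geq0$: for any $z$, $(z^\top Sx)^2\leq (z^\top Sz)(x^\top Sx)$.
\item If $x^\top Sx=0$, the hypothesis forces $d^\top\calM_{22}d\geq 0$. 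Together with $\calM_{22}\leq0$ this gives $d^\top\calM_{22}d=0$, hence $\calM_{22}d=0$. Take $\Delta:=d\,x^\top/(x^\top x)$, which is well defined because $x\neq0$. Then $\Delta x=d$ and $\calM_{22}\Delta=0$, so the completed-square expression equals $S\geq0$.
\end{itemize}

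\textbf{Main obstacle.} The main care point is Step 1. The generalized Schur complement identity needs exactly $\calM_{12}\calM_{22}^\dagger\calM_{22}=\calM_{12}$, so the kernel inclusion must be converted correctly into this projector identity. Once that is done, the rest is the explicit rank-one construction above. The degenerate case $x^\top Sx=0$ is handled by the semidefiniteness of $\calM_{22}$, as shown.
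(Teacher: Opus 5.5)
Your proof is correct and complete. The kernel inclusion gives exactly $\calM_{12}\calM_{22}^\dagger\calM_{22}=\calM_{12}$, using $\calM_{22}^\dagger\calM_{22}=\calM_{22}\calM_{22}^\dagger$ for symmetric $\calM_{22}$; you rely on this equality without stating it, so it is worth a line. The completed-square identity also checks term by term, and note that $Z_0^\top\calM_{22}Z_0=\calM_{12}\calM_{22}^\dagger\calM_{21}$ needs only $A^\dagger AA^\dagger=A^\dagger$. Both cases of Step 2 then give $Z=Z_0+\Delta$ with $Zx=y$ and $S+\Delta^\top\calM_{22}\Delta\geq 0$.

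The paper does not prove this lemma; it imports it verbatim from \cite{van2023quadratic}, so there is no in-paper argument to compare against. A common alternative route goes through a full parametrization of $\mathcal{Z}_r(\calM)$. There one picks a contraction sending $(\calM|\calM_{22})^{1/2}x$ to $(-\calM_{22})^{1/2}d$, and separately adds a free term to cover the component of $d$ in $\ker\calM_{22}$. Your version is more elementary. It needs no matrix square roots and no parametrization theorem, and it gives an explicit rank-one correction of the central solution $Z_0=-\calM_{22}^\dagger\calM_{21}$. That single correction $d\,x^\top S/(x^\top Sx)$ handles the image and kernel components of $d$ together, and the degenerate case $x^\top Sx=0$ is settled cleanly by $\calM_{22}\leq 0$.
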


Finally, we define
\begin{equation*}
\Pi(Q,R):=
\begin{bmatrix}
Q-R^\top R&\quad R^\top \\ 
R&-I
\end{bmatrix}
\in \mathbf{\Pi}_{n,n},
\end{equation*}
and consider the following result which plays an essential role in stability analysis.
\begin{lemma}\label{Lstability}
	Assume $ \Theta(Q,R) $ has no eigenvalues on the imaginary axis, $ Q\geq0 $ and $ R $ is Hurwitz.
	Then, the following two statements are equivalent:
	\begin{itemize}
		\item[a)] $ Z\in \mathcal{Z}_{n}(\Pi(Q,R)) \Rightarrow Z $ is Hurwitz;
		\item[b)] $ \Lambda(Q,R,0)<0 $.
	\end{itemize}
\end{lemma}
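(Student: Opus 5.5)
The plan is to first make the set $\mathcal{Z}_n(\Pi(Q,R))$ explicit. Expanding the quadratic form gives
\[
\begin{bmatrix} I\\Z\end{bmatrix}^{\!\top}\Pi(Q,R)\begin{bmatrix} I\\Z\end{bmatrix}
= Q-R^\top R+R^\top Z+Z^\top R-Z^\top Z = Q-(Z-R)^\top(Z-R).
\]
Hence $\mathcal{Z}_n(\Pi(Q,R))=\{Z \mid (Z-R)^\top(Z-R)\le Q\}$, a matrix ``ball'' centred at $R$. Since $Q\ge 0$, it contains $R$. It is also convex, because $Z\mapsto (Z-R)^\top(Z-R)$ is matrix-convex. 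Note also that $\Lambda(Q,R,0)=Q-R^\top R$.

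\emph{b) $\Rightarrow$ a).} First I will show that $\Lambda(Q,R,i\omega)<0$ for every $\omega\in\mathbb{R}$.
\begin{enumerate}
\item By Lemma~\ref{imcirc} and the assumption on $\Theta(Q,R)$, the Hermitian matrix $\Lambda(Q,R,i\omega)$ is non-singular for all $\omega$.
\item It depends continuously on $\omega$, so its eigenvalues are continuous in $\omega$ and can never cross zero.
\item At $\omega=0$ all eigenvalues are negative by b), so they stay negative for all $\omega$.
\end{enumerate}
Now suppose some $Z\in\mathcal{Z}_n(\Pi(Q,R))$ is not Hurwitz.
\begin{enumerate}
\item Consider the segment $Z_s=R+s(Z-R)$, $s\in[0,1]$. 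It lies in the set, since $(Z_s-R)^\top(Z_s-R)=s^2(Z-R)^\top(Z-R)\le Q$.
\item $Z_0=R$ is Hurwitz and $Z_1$ is not. By continuity of eigenvalues, some $Z_s$ has an eigenvalue $i\omega$ with eigenvector $v\neq 0$.
\item Then $(Z_s-R)v=(i\omega I-R)v$. Evaluating the defining inequality at $v$ gives $v^*(R-i\omega I)^*(R-i\omega I)v\le v^*Qv$, i.e.\ $v^*\Lambda(Q,R,i\omega)v\ge 0$.
\item This contradicts $\Lambda(Q,R,i\omega)<0$.
\end{enumerate}

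\emph{a) $\Rightarrow$ b).} I argue by contraposition. Suppose $\Lambda(Q,R,0)=Q-R^\top R$ is not negative definite. Since it is real symmetric, there is a real $x\neq0$ with $x^\top(Q-R^\top R)x\ge 0$, i.e.
\[
\begin{bmatrix}x\\0\end{bmatrix}^{\!\top}\Pi(Q,R)\begin{bmatrix}x\\0\end{bmatrix}\ge 0 .
\]
I will check that $\Pi(Q,R)\in\mathbf{\Pi}_{n,n}$:
\begin{itemize}
\item $\Pi_{22}=-I\le0$;
\item $\ker\Pi_{22}=\{0\}$, so the kernel inclusion holds trivially;
\item the Schur complement is $Q-R^\top R+R^\top R=Q\ge0$.
\end{itemize}
Lemma~\ref{LZ} with $y=0$ then yields $Z\in\mathcal{Z}_n(\Pi(Q,R))$ with $Zx=0$. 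So $Z$ has eigenvalue $0$ and is not Hurwitz, contradicting a).

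The main obstacle is the direction b) $\Rightarrow$ a). Negativity of $\Lambda$ is only given at $\omega=0$, and it must be propagated to the whole imaginary axis and then used to rule out boundary crossings of eigenvalues. The continuity argument along $\omega$, powered by Lemma~\ref{imcirc}, and the convex homotopy from the Hurwitz centre $R$ are the two key ingredients. Here I must be careful that eigenvalue continuity for Hermitian families (in $\omega$) and for general matrices (in $s$) is invoked correctly.
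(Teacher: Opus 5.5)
Your proof is correct. The direction a) $\Rightarrow$ b) matches the paper's argument. So does the first half of b) $\Rightarrow$ a), where you use Lemma~\ref{imcirc} and eigenvalue continuity in $\omega$ to get $\Lambda(Q,R,i\omega)<0$ for all $\omega$. The two proofs differ only in the concluding step of b) $\Rightarrow$ a).

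\textbf{The paper's conclusion.} It writes $\Lambda(Q,R,i\omega)=\begin{bmatrix} I\\ i\omega I\end{bmatrix}^*\Pi(Q,R)\begin{bmatrix} I\\ i\omega I\end{bmatrix}$ and applies the KYP lemma (Lemma~\ref{LKYP} with $\calA=0$, $\calB=I$, $\calM=\Pi(Q,R)$). This gives $P\in\mathbb{S}^n$ with $\begin{bmatrix}0&P\\P&0\end{bmatrix}+\Pi(Q,R)<0$.
\begin{itemize}
\item Testing with $\begin{bmatrix} I\\ R\end{bmatrix}$ gives $R^\top P+PR<-Q\le 0$, so $P>0$ because $R$ is Hurwitz.
\item Testing with $\begin{bmatrix} I\\ Z\end{bmatrix}$ gives $Z^\top P+PZ<0$ for every $Z\in\mathcal{Z}_n(\Pi(Q,R))$.
\end{itemize}

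\textbf{Your conclusion.} You avoid the KYP lemma. You use the explicit ball description $(Z-R)^\top(Z-R)\le Q$ and its star-shapedness about the Hurwitz centre $R$; this is where $Q\ge 0$ enters, via $s^2(Z-R)^\top(Z-R)\le s^2Q\le Q$. Continuity of the spectral abscissa along the segment $R+s(Z-R)$ then forces a member with an eigenvalue $i\omega$. Its eigenvector $v$ satisfies $v^*\Lambda(Q,R,i\omega)v\ge 0$, which is a contradiction.

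\textbf{Comparison.} Your route is more elementary. It makes clear that negativity of $\Lambda$ on the imaginary axis is exactly what stops eigenvalues of members of the set from crossing the axis. It certifies only that each member is Hurwitz. The paper's route costs the KYP machinery but yields more: a single $P>0$ that is a common quadratic Lyapunov function for the whole set, i.e.\ quadratic rather than merely pointwise stability. That is also the natural certificate in the LMI/QMI framework the paper builds on.
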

\begin{proof}
	\underline{a) $\Rightarrow$ b)}:\\
	Suppose that any $ Z\in \mathcal{Z}_{n}(\Pi(Q,R)) $ is Hurwitz.
	We aim to prove that $ \Lambda(Q,R,0)<0 $. 
	Suppose on the contrary that $ \Lambda(Q,R,0)\not<0 $, that is,
	$$ \begin{bmatrix}
	I \\ R
	\end{bmatrix}^*
	\begin{bmatrix}
	Q & \ 0 \\ 0 & -I
	\end{bmatrix}
	\begin{bmatrix}
	I \\ R
	\end{bmatrix}\not<0. $$
	Then, there exists a nonzero vector $ x \in \mathbb{R}^n $ such that
	\begin{align*}
	&\begin{bmatrix}
	x \\ 0
	\end{bmatrix}^*
	\begin{bmatrix}
	I & \ 0\\ R & -I
	\end{bmatrix}^{\!\top\!}
	\begin{bmatrix}
	Q & \ 0 \\ 0 & -I
	\end{bmatrix}
	\begin{bmatrix}
	I & \ 0\\ R & -I
	\end{bmatrix}
	\begin{bmatrix}
	x \\ 0
	\end{bmatrix}
	\\
	=&\begin{bmatrix}
	x \\ 0
	\end{bmatrix}^*
	\Pi(Q,R)
	\begin{bmatrix}
	x \\ 0
	\end{bmatrix}\geq 0.
	\end{align*}
	Since $\Pi(Q,R)\in \mathbf{\Pi}_{n,n}$, we obtain from Lemma~\ref{LZ} that there exists $ Z\in \mathbb{R}^{n\times n} $ satisfying
	$
	Z \in \mathcal{Z}_{r}\left(\Pi(Q,R)\right)
	$
	and such that
	\begin{equation}\label{P2}
	\begin{bmatrix}
	x\\0
	\end{bmatrix}= \begin{bmatrix}
	I\\ Z
	\end{bmatrix}x.
	\end{equation}
	However, from (\ref{P2}) it follows that $ Z $ has a zero eigenvalue and is therefore not Hurwitz.
	This contradicts the initial assumption.
	We thus conclude that $ \Lambda(Q,R,0)<0 $ when $ Z $ is Hurwitz for any $ Z\in \mathcal{Z}_{n}(\Pi(Q,R)) $.
	\\
	\underline{b) $\Rightarrow$ a)}:\\
	Suppose $ \Lambda(Q,R,0)<0 $.
	Since $ \Theta(Q,R) $ has no eigenvalues on the imaginary axis, we have that $ \Lambda(Q,R,i\omega) $ is non-singular for all $ \omega\in \mathbb{R} $ according to Lemma~\ref{imcirc}.
	In addition, since $ Q\geq0 $, we have $ \Lambda(Q,R,i\omega) \in \mathbb{H}^{n} $, so that the eigenvalues of $ \Lambda(Q,R,i\omega) $ are real for any $ \omega \in \mathbb{R} $.
	Meanwhile, $ \Lambda(Q,R,i\omega) $ depends continuously on $ \omega $, implying that the eigenvalues of $ \Lambda(Q,R,i\omega) $ also vary continuously with $ \omega $ \cite{kato2012short}.
	Now, suppose on the contrary that there exists $ \omega\in \mathbb{R} $ such that $ \Lambda(Q,R,i\omega)\not<0 $. 
	By the continuity of the eigenvalues of $ \Lambda $ and the fact that $ \Lambda(Q,R,0)<0 $, there must exist $\bar{\omega}\in \mathbb{R}$ for which $ \Lambda(Q,R,i\bar{\omega}) $ is singular. 
	This leads to a contradiction. 
	We conclude that $ \Lambda(Q,R,i\omega)<0 $ for all $ \omega\in \mathbb{R} $.
	Note that 
	\begin{align*}
	\Lambda(Q,R,i\omega)
	&=Q-R^\top R + i\omega R^\top - i\omega R - \omega^2 I\\
	&=
	\begin{bmatrix}
	I\\i\omega I
	\end{bmatrix}^*
	\Pi(Q,R)
	\begin{bmatrix}
	I\\i\omega I
	\end{bmatrix}.
	\end{align*}
	Then, $ \Lambda(Q,R,i\omega)<0 $ if and only if \eqref{iomega} holds for $ \calA=0 $, $ \calB=I $ and $ \calM=\Pi(Q,R) $.
	By Lemma~\ref{LKYP}, this implies that there exists $ P\in \mathbb{S}^n $ such that
	\begin{equation}\label{MP}
	\begin{bmatrix}
	0&P\\P&0
	\end{bmatrix}+\Pi(Q,R)<0.
	\end{equation}
	Then, we have
	\begin{equation*}
	\begin{bmatrix}
	I\\R
	\end{bmatrix}^{\!\top\!}
	\left(\begin{bmatrix}
	0&P\\P&0
	\end{bmatrix}+\Pi(Q,R)\right)
	\begin{bmatrix}
	I\\R
	\end{bmatrix}<0.
	\end{equation*}
	Since
	\begin{equation*}
	\begin{bmatrix}
	I\\R
	\end{bmatrix}^{\!\top\!}
	\Pi(Q,R)
	\begin{bmatrix}
	I\\R
	\end{bmatrix}=
	Q\geq0,
	\end{equation*}
	we have
	\begin{equation}
	\begin{bmatrix}
	I\\R
	\end{bmatrix}^{\!\top\!}
	\begin{bmatrix}
	0&P\\P&0
	\end{bmatrix}
	\begin{bmatrix}
	I\\R
	\end{bmatrix}<0.
	\end{equation}
	Since $ R $ is Hurwitz, there exists $ \mathcal{Q}>0 $ such that \mbox{$ R^\top P + P R = - \mathcal{Q} $}, implying $ P = \int_{0}^{\infty}e^{R^\top t}\mathcal{Q}e^{Rt}dt>0 $ \cite[Theorem 3.28]{trentelman2002control}.
	Hence, we conclude that for any $ Z\in \mathcal{Z}_{n}(\Pi(Q,R)) $, there exists $ P>0 $ such that
	\begin{equation*}
	\begin{bmatrix}
	I\\Z
	\end{bmatrix}^{\!\top\!}
	\begin{bmatrix}
	0&P\\P&0
	\end{bmatrix}
	\begin{bmatrix}
	I\\Z
	\end{bmatrix}<
	-\begin{bmatrix}
	I\\Z
	\end{bmatrix}^{\!\top\!}
	\Pi(Q,R)
	\begin{bmatrix}
	I\\Z
	\end{bmatrix}
	\leq 0,
	\end{equation*}
	implying that $ Z $ is Hurwitz.
\end{proof}

Given the above auxiliary results, we are in a position to start the analysis for the stability objective ii) of Problem~\ref{Pc}.
For notational convenience, let us define 
$$ B_{\hat{K}}:=\lim\limits_{t\to\infty}B_\textrm{s}\hat{K}(t). $$
The subsequent analysis proceeds as follows.
We first construct an ellipsoidal set around $ A_\textrm{m} $.
The set will be constructed to contain all possible $ A_\textrm{s}+B_{\hat{K}} $ arising from the adaptive law \eqref{alp} and \eqref{gain}.
Then, we will derive necessary and sufficient conditions on $ A_\textrm{m} $ and certain noise characteristics to guarantee that all matrices within the ellipsoidal set are Hurwitz.

Let us consider the set $ \mathcal{Z}_{n}(\Pi(Q_\gamma,A_\textrm{m})) $ with 
\begin{equation}\label{Q}
Q_\gamma:= \gamma^2 (I + A_\textrm{m}^\top A_\textrm{m}),
\end{equation}
where the scalar $ \gamma>0 $ is such that
\begin{equation}\label{SNR}
\bar{W}_M^\top\bar{W}_M\leq\gamma^2 \calD(t_M)^2.
\end{equation}
The inequality \eqref{SNR} can be interpreted as a \emph{signal-to-noise ratio}, governed by the scalar $\gamma>0$.
Note that such a $ \gamma $ always exists since
\begin{equation*}
\im \bar{W}_M^\top \subseteq \im \begin{bmatrix}
X_M\\X_M^\textrm{D}
\end{bmatrix} = \im \calD(t_M).
\end{equation*}
Recall from Theorem~\ref{Tec} that the control gains \eqref{gain} with $ \Psi(t) $ updated by the adaptive law \eqref{alp} are such that \eqref{lim} holds.
By \eqref{lim}, we have
$$
A_\textrm{m}-A_\textrm{s}-B_{\hat{K}}=
\bar{W}_M\calD(t_M)^\dagger
\begin{bmatrix}
I & A_\textrm{m}^\top
\end{bmatrix}^{\!\top\!}.
$$
Then, by \eqref{SNR} and \eqref{image}, we have the ellipsoidal set
\begin{equation*}
\begin{aligned}
& (A_\textrm{m}-A_\textrm{s}-B_{\hat{K}})^\top
(A_\textrm{m}-A_\textrm{s}-B_{\hat{K}})
\\
\leq&\ \gamma^2
\begin{bmatrix}
I & A_\textrm{m}^\top
\end{bmatrix}
\calD(t_M)^\dagger\calD(t_M)^2\calD(t_M)^\dagger
\begin{bmatrix}
I & A_\textrm{m}^\top
\end{bmatrix}^{\!\top\!}
\\
=&\	\gamma^2
\begin{bmatrix}
I & A_\textrm{m}^\top
\end{bmatrix}
\begin{bmatrix}
I & A_\textrm{m}^\top
\end{bmatrix}^{\!\top\!} = Q_\gamma,
\end{aligned}
\end{equation*}
which implies 
\begin{equation}\label{matchingSNR}
A_\textrm{s}+B_{\hat{K}}\in \mathcal{Z}_{n}(\Pi(Q_\gamma,A_\textrm{m})).
\end{equation}
Then, $ \mathcal{Z}_{n}(\Pi(Q_\gamma,A_\textrm{m})) $ contains all possible $ A_\textrm{s}+B_{\hat{K}} $ arising from the adaptive law \eqref{alp} and \eqref{gain}, and $ Q_\gamma $ serves as a metric to quantify how closely $ A_\textrm{s}+B_{\hat{K}} $ adheres to the reference matrix $A_\textrm{m}$.
Accordingly, the stability objective ii) of Problem~\ref{Pc} can be reformulated as the problem of finding the conditions on $ A_\textrm{m} $ and $ \gamma $ such that \eqref{matchingSNR} implies that $ A_\textrm{s}+B_{\hat{K}} $ is Hurwitz.
It turns out that a necessary and sufficient condition can be obtained for such a problem, which can be stated as follows.

\begin{theorem}\label{Tstability}
	For a given  $ \gamma > 0 $, assume $ \Theta(Q_\gamma,A_\textnormal{m}) $ has no eigenvalues on the imaginary axis.
	Consider the control gains \eqref{gain} updated by the adaptive law \eqref{alp}.
	Then,  all $ Z\in \mathcal{Z}_{n}(\Pi(Q_\gamma,A_\textnormal{m})) $ are Hurwitz if and only if 
	\begin{equation}\label{stability}
		\Lambda(Q_\gamma,A_\textnormal{m},0)<0.
	\end{equation}
\end{theorem}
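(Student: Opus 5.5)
The plan is to obtain Theorem~\ref{Tstability} as a direct specialization of Lemma~\ref{Lstability}, with $Q=Q_\gamma$ and $R=A_\textrm{m}$. Lemma~\ref{Lstability} has three hypotheses, and I will check each in turn.

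First, the spectral assumption that $\Theta(Q_\gamma,A_\textrm{m})$ has no eigenvalues on the imaginary axis is assumed verbatim in the statement. Second, $Q_\gamma\geq0$ follows from \eqref{Q}, since $Q_\gamma=\gamma^2(I+A_\textrm{m}^\top A_\textrm{m})$ and $I+A_\textrm{m}^\top A_\textrm{m}>0$; in fact $Q_\gamma>0$ because $\gamma>0$. Third, $R=A_\textrm{m}$ is Hurwitz by the standing assumption on the reference model in Section~\ref{SPF}. One further check is that $\Pi(Q_\gamma,A_\textrm{m})$ belongs to $\mathbf{\Pi}_{n,n}$, which Lemma~\ref{Lstability} uses through Lemma~\ref{LZ}. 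This holds because $\Pi_{22}=-I<0$, $\ker\Pi_{22}=\{0\}$, and the Schur complement is $Q_\gamma-A_\textrm{m}^\top A_\textrm{m}+A_\textrm{m}^\top A_\textrm{m}=Q_\gamma\geq0$.

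With these hypotheses in place, Lemma~\ref{Lstability} gives the equivalence between statement a), that every $Z\in\mathcal{Z}_n(\Pi(Q_\gamma,A_\textrm{m}))$ is Hurwitz, and statement b), that $\Lambda(Q_\gamma,A_\textrm{m},0)<0$. This is exactly the claim. The mention of the control gains \eqref{gain} and the adaptive law \eqref{alp} in the statement only supplies context: by \eqref{matchingSNR}, the limit matrix $A_\textrm{s}+B_{\hat K}$ lies in this set, so \eqref{stability} also implies that this limit is Hurwitz. I would add that remark at the end, but it is not needed for the equivalence itself.

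I expect no real obstacle, since the substance sits in Lemma~\ref{Lstability}. The only step needing care is to state explicitly that $Q_\gamma\geq0$ and that $\Pi(Q_\gamma,A_\textrm{m})\in\mathbf{\Pi}_{n,n}$, so that Lemma~\ref{LZ} and the Hermitian and continuity argument inside Lemma~\ref{Lstability} apply without change.
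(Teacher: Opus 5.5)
Your proposal is correct and matches the paper's proof, which simply invokes Lemma~\ref{Lstability} with $Q=Q_\gamma$ and $R=A_\textrm{m}$, noting that $Q_\gamma\geq0$ and $A_\textrm{m}$ is Hurwitz. You also check that the Schur complement of $\Pi(Q_\gamma,A_\textrm{m})$ equals $Q_\gamma\geq0$, so that $\Pi(Q_\gamma,A_\textrm{m})\in\mathbf{\Pi}_{n,n}$ and Lemma~\ref{LZ} applies; this is a worthwhile detail that the paper leaves implicit.
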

\begin{proof}
	The proof directly follows from Lemma~\ref{Lstability} since $ Q_\gamma\geq0 $ and $ A_\textrm{m} $ is Hurwitz.
\end{proof}

In view of \eqref{matchingSNR}, \eqref{stability} allows to conclude that, despite $ A_\textrm{s} $ being unknown, the matrix $ A_\textrm{s}+B_{\hat{K}} $ is Hurwitz, that is, $ \hat{K} $ in \eqref{gain} converges to a stabilizing gain.
We note from the definition of $ \Lambda $ in \eqref{Lambda} that, for a Hurwitz matrix $ A_\textrm{m} $, \eqref{stability} holds for sufficiently small $ \gamma $.
This implies that convergence to a stabilizing gain requires 
a sufficiently large signal-to-noise ratio.

\subsection{Boundedness analysis}
\label{Sba}

In this subsection, we show the boundedness of all closed-loop signals and we characterize how the limit tracking error depends on the noise.
To this end, we first recall some standard notation.
A continuous function $ f: \mathbb{R}_+ \to \mathbb{R}_+ $ is of class $ \mathcal{N}_0 $ if it is nondecreasing and satisfies $ f(0)=0 $.
A function $ f $ of class $ \mathcal{N}_0 $ is also of class $ \mathcal{K} $ if it is strictly increasing.
A continuous function $ g: \mathbb{R}_+ \times \mathbb{R}_+ \to \mathbb{R}_+ $ is of class $ \mathcal{KL} $ if, for each fixed $ t\geq0 $, the mapping $ \tau\to g(\tau,t) $ is a $ \mathcal{K} $ function, and for each fixed $ \tau\geq0 $, the mapping $ t\to g(\tau,t) $ is non-increasing with $ g(\tau,t)\to0 $ as $ t\to\infty $.
For a matrix-valued signal $ S:\mathbb{R}_+\to\mathbb{R}^{r\times s} $, we define the seminorm
\begin{equation*}
\|S\|_\infty:=\text{ess$ \, $}\sup\{\|S(t)\|,t\geq0\},
\end{equation*}
where $ \|\cdot\| $ is any matrix norm.
We denote by $\calL_\infty^{r,s}$ the space of signals $S$ with $ \|S\|_\infty < \infty. $
In addition, we use the shorthand notation $\calL_\infty^r := \calL_\infty^{r,1}$.
With this notation, we recall some key definitions of input-to-state stability.

\begin{definition}\label{Diss}
	(\cite{sontag1989smooth}). Consider an input-affine system
	\begin{equation}\label{nls}
	\dot{\xi}(t)=\phi(\xi(t))+\Upsilon(\xi(t))\mu(t)
	\end{equation}
	with $ \phi:\mathbb{R}^n\to\mathbb{R}^n $ and $ \Upsilon:\mathbb{R}^n\to\mathbb{R}^{n\times m} $.
	The system \eqref{nls} is input-to-state stable (ISS) if there exist a function $ g $ of class $ \mathcal{KL} $ and a function $ f $ of class $ \mathcal{K} $ such that for every measurable essentially bounded $ \mu(\cdot) $ and every initial state $ \xi_0 $, the solution $ \xi(t) $ exists for every $ t \geq 0 $ and satisfies
	\begin{equation}\label{issc}
	|\xi(t)|\leq g(|\xi_0|,t)+f(\|\mu\|_\infty).
	\end{equation}
\end{definition}
Definition~\ref{Diss} implies that, for any initial state $ \xi_0 $ and any input $ u\in \calL_{\infty}^m $, the corresponding state satisfies $ x\in\calL_{\infty}^n $.

We aim to study input-to-state stability for the dynamics of the tracking error
\begin{equation*}
\begin{aligned}
&\ \dot{e}(t)=\dot{x}(t)-\dot{x}_\textrm{m}(t)
\\
=& \left(A_\textrm{s}\!+\!B_\textrm{s}\hat{K}(t)\right)\! x(t)+B_\textrm{s}\hat{L}(t)r(t)-A_\textrm{m}x_\textrm{m}(t)-B_\textrm{m}r(t)
\\
=&\left( A_\textrm{s}\!+\!B_\textrm{s}\hat{K}(t)\right)\! e(t)+\\
&\left( A_\textrm{s}\!+\!B_\textrm{s}\hat{K}(t)\!-\!A_\textrm{m}\right)\! x_\textrm{m}(t)+\left( B_\textrm{s}\hat{L}(t)\!-\!B_\textrm{m}\right)\! r(t).
\end{aligned}
\end{equation*}
Denote
\begin{equation}\label{hatu}
\begin{aligned}
\hat{A}(t)&:=A_\textrm{s}\!+\!B_\textrm{s}\hat{K}(t),\\
\hat{u}(t)&:=
\begin{bmatrix}
A_\textrm{s}\!+\!B_\textrm{s}\hat{K}(t)\!-\!A_\textrm{m}\
&\ B_\textrm{s}\hat{L}(t)\!-\!B_\textrm{m}
\end{bmatrix}
\begin{bmatrix}
x_\textrm{m}(t)\\r(t)
\end{bmatrix}.
\end{aligned}
\end{equation}
Then, the dynamics of $ e $ can be written as
\begin{equation}\label{e}
\dot{e}(t)=\hat{A}(t)e(t)+\hat{u}(t),\quad e(0) = x_0-x_\textrm{m0}.
\end{equation}
The following result is instrumental to analyze the dynamics \eqref{e}.

\begin{lemma}\label{Liss}
	Consider a linear time-varying system
	\begin{equation}\label{csxi}
	\dot{\xi}(t) = A(t) \xi(t) + \mu(t).
	\end{equation}
	If $ A(t) $ converges to a Hurwitz matrix $ A_H $ exponentially, then \eqref{csxi} is ISS.
\end{lemma}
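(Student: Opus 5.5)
We prove Lemma~\ref{Liss} by writing $A(t)=A_H+\Delta(t)$, where $\|\Delta(t)\|\leq c\,e^{-\lambda t}$ for some $c,\lambda>0$. We treat $\Delta(t)\xi(t)$ as a vanishing perturbation of the time-invariant Hurwitz system $\dot{\xi}=A_H\xi$.

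First, since $A_H$ is Hurwitz, there exists $P>0$ with $A_H^\top P+PA_H=-I$ \cite[Theorem 3.28]{trentelman2002control}. Take $V(\xi)=\xi^\top P\xi$, so that $\underline{p}|\xi|^2\leq V\leq\bar{p}|\xi|^2$. Along \eqref{csxi},
\begin{equation*}
\dot V\leq -|\xi|^2+2\|P\|c\,e^{-\lambda t}|\xi|^2+2\|P\||\xi||\mu|.
\end{equation*}
Choose $T$ such that $2\|P\|c\,e^{-\lambda T}\leq 1/4$. For $t\geq T$, Young's inequality gives
\begin{equation*}
\dot V\leq -\tfrac{1}{2}|\xi|^2+4\|P\|^2|\mu|^2\leq -\tfrac{1}{2\bar p}V+4\|P\|^2\|\mu\|_\infty^2.
\end{equation*}
The comparison lemma then yields an estimate of the form \eqref{issc} on $[T,\infty)$, in terms of $|\xi(T)|$. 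The $\mathcal{KL}$ term is exponential, and the gain is $f(s)=\kappa s$ with $\kappa=2\|P\|\sqrt{2\bar p/\underline p}$.

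Second, we handle the finite interval $[0,T]$. There $\|A(t)\|\leq a:=\|A_H\|+c$, so Grönwall gives
\begin{equation*}
|\xi(t)|\leq e^{aT}|\xi_0|+Te^{aT}\|\mu\|_\infty .
\end{equation*}
This also guarantees existence of solutions for all $t\geq0$, since the system is linear with bounded coefficients.

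Third, we combine the two pieces. Define $g(s,t)=C e^{-\beta t}s$ with $C$ large enough to cover both phases, taking $C\geq e^{aT}e^{\beta T}\sqrt{\bar p/\underline p}$, so the decay is trivially valid on $[0,T]$. Take $f(s)=(\kappa+Te^{aT}\sqrt{\bar p/\underline p}+Te^{aT})s$. Then $g$ is of class $\mathcal{KL}$, $f$ is of class $\mathcal{K}$, and \eqref{issc} holds.

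An alternative is to note that the state-transition matrix satisfies $\|\Phi(t,s)\|\leq k e^{-\beta(t-s)}$ uniformly, which follows by the same splitting, and then apply variation of constants. The main obstacle is obtaining estimates that are uniform in the initial time. The transient on $[0,T]$ is where $\Delta$ may be large, and it must be absorbed into constants that are independent of $\xi_0$ and $\mu$. The fixed choice of $T$, depending only on $c$, $\lambda$ and $P$, is what makes this work.
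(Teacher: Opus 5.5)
Your proof is correct, but it is organized differently from the paper's.

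The paper uses the same $V=z^\top Pz$, with $A_H^\top P+PA_H=-I$, and the same idea of waiting until a time $\theta_0$ after which $A(t)-A_H$ is small. It does this only for the homogeneous system $\dot z=A(t)z$. There it proves $\|\Phi(\theta_2,\theta_1)\|\leq c\exp(-\lambda(\theta_2-\theta_1))$ for all $\theta_2\geq\theta_1$, via a three-case analysis of where $\theta_0$ lies relative to $\theta_1,\theta_2$. It then reads off \eqref{issc} from the variation-of-constants formula with $g(s,t)=c\,e^{-\lambda t}s$ and $f(s)=(c/\lambda)s$. This is exactly the alternative you mention at the end.

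Your main route instead uses $V$ as an ISS-Lyapunov function for the forced system on $[T,\infty)$, via Young's inequality plus the comparison lemma. You add a Gr\"onwall bound on $[0,T]$ and glue the two estimates at the single time $T$. This buys you two things:
\begin{itemize}
\item You only need estimates issued from the two fixed initial times $0$ and $T$. So the uniformity in the initial time that you single out as the main obstacle is really an issue only for the transition-matrix route. There the convolution integral samples $\Phi(t,\tau)$ at every $\tau\in[0,t]$, which is what forces the paper's three cases.
\item Since you never use superposition, your argument would carry over with little change to nonlinear perturbation or input terms.
\end{itemize}
The paper's route, in exchange, exploits linearity to give a cleaner gain. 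It also establishes uniform exponential stability of $\dot z=A(t)z$ as a by-product.

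One minor point: state $\beta$ explicitly. From your comparison step it is $\beta=1/(4\bar{p})$.
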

\begin{proof}
	Suppose $ \lim_{t\to\infty} A(t) = A_{H} $ exponentially with $ A_H $ being Hurwitz.
	Then, there exist $ c_0>0 $ and $ \lambda_0>0 $ such that $ \|A(t) - A_{H}\| \leq c_0 \exp(-\lambda_0 t) $ for any $ t\geq0 $.
	Since $ A_H $ is Hurwitz, there exists $ P>0 $ such that \mbox{$ A_H^\top P+PA_H = -I $}.
	Furthermore, there exist $ c_1,c_2>0 $ such that $ c_1I\leq P\leq c_2I $.
	Consider the auxiliary autonomous system
	\begin{equation}\label{as}
	\dot{z}(t)=A(t)z(t),
	\end{equation}
	and the Lyapunov function $ V(z)=z^\top P z $.
	Then, we have
	\begin{align*}
	\dot{V}(z(t))=&\ z(t)^{\!\top}\left( A(t)^{\!\top} P + P A(t)\right) z(t)\\
	=&\ z(t)^{\!\top} \left( A_H^\top P + P A_H\right)^{\!\top} z(t)+\\
	&\ z(t)^{\!\top} \left( (A(t)-A_H)^\top P + P (A(t)-A_H)\right)  z(t)\\
	\leq& -|z(t)|^2 + 2\|P\| \cdot \|A(t)-A_H\| \cdot |z(t)|^2\\
	\leq& -\frac{1}{c_2}V(z(t)) + \frac{2\|P\|c_0}{c_1} \exp(-\lambda_0 t) V(z(t)),
	\end{align*}
	where the first inequality follows from Cauchy-Schwarz inequality, and the second  inequality follows from $ \|A(t) - A_{H}\| \leq c_0 \exp(-\lambda_0 t) $ and $ c_1I\leq P\leq c_2I $.
	Since $ \exp(-\lambda_0 t) $ converges to $ 0 $ exponentially, there exists $ \theta_0>0 $ such that $ 4\|P\|c_0c_2\exp(-\lambda_0 \theta_0)<c_1 $.
	Then, for any $ t\geq \theta_0 $, we have 
	\begin{equation}\label{Ve}
	\dot{V}(z(t))< -\frac{1}{2c_2}V(z(t)),
	\end{equation} 
	implying that $ z\in \calL_{\infty}^n $.
	Denote the state transition matrix for \eqref{as} as $ \Phi(\theta_2,\theta_1) $ (we recall that $ \Phi(\theta_2,\theta_1) $ is such that $z(\theta_2) = \Phi(\theta_2, \theta_1) z(\theta_1)$ for $ \theta_2\geq \theta_1>0 $).
	We consider three possible ordering cases for $\theta_0$, $\theta_1$, and $\theta_2$.
	\\
	In case $ \theta_0\geq \theta_2\geq \theta_1 $, there exist $ c_3>0 $ and $ \lambda>0 $ such that 
	$$
	\|\Phi(\theta_2,\theta_1)\|\leq c_3 \leq c_3 \exp(\lambda \theta_0) \cdot \exp(-\lambda(\theta_2-\theta_1)).
	$$
	In case $ \theta_2\geq \theta_1\geq \theta_0 $, by \eqref{Ve}, there exist $ c_4>0 $ and $ \lambda>0 $ such that
	$$
	\|\Phi(\theta_2,\theta_1)\|\leq c_4 \exp(-\lambda(\theta_2-\theta_1)).
	$$
	Finally, in case $ \theta_2\geq \theta_0\geq \theta_1 $, there exist $ c_4,c_5>0 $ and $ \lambda>0 $ such that
	\begin{align*}
	\|\Phi(\theta_2,\theta_1)\| 
	& = \|\Phi(\theta_2,\theta_0)\Phi(\theta_0,\theta_1)\| \\
	& \leq \|\Phi(\theta_0,\theta_1)\|\cdot \|\Phi(\theta_2,\theta_0)\| \\
	&\leq c_5\cdot c_4 \exp(-\lambda(\theta_2-\theta_0)) \\
	&\leq c_5 c_4 \exp(\lambda \theta_0) \cdot \exp(-\lambda(\theta_2-\theta_1)). 
	\end{align*}
	From the above three case, we conclude that there exist $ c>0 $ and $ \lambda>0 $ such that
	\begin{equation}\label{Phi}
	\|\Phi(\theta_2,\theta_1)\|\leq c \exp(-\lambda(\theta_2-\theta_1)) \text{ for } \theta_2  \geq \theta_1 > 0.
	\end{equation}
	Having considered the autonomous system \eqref{as}, we now consider the system \eqref{csxi}.
	The solution of \eqref{csxi} can be represented as 
	\begin{equation}\label{solution}
	\xi(t)=\Phi(t,0)\xi(0)+\int_0^t\Phi(t,\tau)\mu(\tau)d\tau.
	\end{equation}
	Then, we have
	\begin{align*}
	|\xi(t)|&\leq\|\Phi(t,0)\| \cdot |\xi(0)|+\int_0^t\|\Phi(t,\tau)\| \cdot |\mu(\tau)|d\tau.
	\\
	&\leq c \exp(-\lambda t) |\xi(0)| + c \|\mu\|_\infty \int_0^t\exp(-\lambda(t-\tau))d\tau
	\\
	&\leq c \exp(-\lambda t) |\xi(0)| + \frac{c}{\lambda} \|\mu\|_\infty
	\\
	&:=g(|\xi(0)|,t)+f(\|\mu\|_\infty).
	\end{align*}
	Note that $ g(s,t) $ is a $ \mathcal{KL} $ function and $ f(s) $ is a $ \mathcal{K} $ function.
	Hence, \eqref{csxi} is ISS. 
\end{proof}

We are now in a position to state the following result.

\begin{theorem}\label{Tbound}
	Assume that \eqref{image} holds and $ \Theta(Q_\gamma,A_\textnormal{m}) $ has no eigenvalues on the imaginary axis.
	For any $x(0),x_\textnormal{m}(0) \in \mathbb{R}^n$ and $\Psi(0)\in \mathbb{R}^{2n\times(n+p)}$, consider the adaptive law \eqref{alp} and the input \eqref{input}.
	Then, if \eqref{stability} holds, the resulting input-state trajectory satisfies $ x\in\calL_{\infty}^n $ and $ u\in\calL_{\infty}^m $. 
	Moreover, the tracking error satisfies $e\in\calL_{\infty}^n $ and there exists a function $ f $ of class $ \mathcal{N}_0 $ such that 
	\begin{equation}\label{eW}
	\lim_{t \rightarrow \infty} |e(t)| \leq f\left( \|\bar{W}_M\|\right) .
	\end{equation}
\end{theorem}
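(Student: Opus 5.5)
The plan is to recast the tracking-error dynamics \eqref{e} as a linear time-varying system of the form \eqref{csxi} and apply Lemma~\ref{Liss}. This requires three facts: $\hat{A}(t)$ converges exponentially to a Hurwitz limit, the input $\hat{u}$ is bounded, and we have a way to handle the $\limsup$ of $|e|$.

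\textbf{Step 1 (exponential convergence of the gains).} For $t\geq t_M$ the matrices $\calD(t)=\calD(t_M)$ and $\calU(t)=\calU(t_M)$ are constant. The adaptive law \eqref{alp} is then the linear time-invariant ODE $\dot{\Psi}=-\Gamma\calD(t_M)\Psi+\Gamma R_\textrm{m}$. By \eqref{V}, $V$ decays exponentially, and since $V\geq \tfrac{1}{2}\lambda_{\min}^+(\calD(t_M))\,|\calD(t_M)\tilde\Psi|^2/\|\calD(t_M)\|^2$ (up to constants), $\calD(t_M)\Psi(t)\to R_\textrm{m}$ exponentially. In the limit computation of Theorem~\ref{Tec}, $\hat K,\hat L$ enter only through $U_M[X_M;X_M^\textrm{D}]^\top\Psi(t)$, which is a linear function of $[X_M;X_M^\textrm{D}]^\top\Psi(t)$. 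That quantity lies in the range where $\calD(t_M)$ acts injectively, so its convergence is exponential too. Hence $B_\textrm{s}\hat K(t)\to B_{\hat K}$ and $B_\textrm{s}\hat L(t)$ converge exponentially. I must be careful that the component of $\Psi$ in $\ker\calD(t_M)$ need not converge. Because $\Gamma$ is not a multiple of the identity, that component is driven by $\Gamma\calD\Psi$ terms that vanish exponentially, so it stays bounded and converges. Either way it does not affect $B_\textrm{s}\hat K$, since $\calU(t_M)$ annihilates $\ker\calD(t_M)$ ($\ker \calD(t_M)=\ker[X_M;X_M^\textrm{D}]^\top$).

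\textbf{Step 2 (Hurwitz limit).} By \eqref{matchingSNR}, $A_H:=A_\textrm{s}+B_{\hat K}\in\mathcal{Z}_n(\Pi(Q_\gamma,A_\textrm{m}))$. By Theorem~\ref{Tstability} and \eqref{stability}, $A_H$ is Hurwitz. On the finite interval $[0,t_M]$ everything is piecewise continuous with bounded coefficients, so there is no finite escape. Lemma~\ref{Liss} therefore gives that \eqref{e} is ISS on $[0,\infty)$; the finite initial segment is absorbed exactly as in that lemma's proof.

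\textbf{Step 3 (bounded input and the limit bound).} Here I take $r\in\calL_\infty^p$ (implicitly assumed). Since $A_\textrm{m}$ is Hurwitz, $x_\textrm{m}\in\calL_\infty^n$, and the matrix factor in $\hat u$ is bounded and convergent, so $\hat u\in\calL_\infty^n$. ISS then gives $e\in\calL_\infty^n$, hence $x=e+x_\textrm{m}\in\calL_\infty^n$. Boundedness of $\calU\Psi$ then gives $u\in\calL_\infty^m$. For \eqref{eW}, I apply the ISS estimate from a shifted initial time $T$, using the uniform transition bound \eqref{Phi}. This gives $\limsup|e|\leq \frac{c}{\lambda}\limsup_{t}|\hat u(t)|$. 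By \eqref{lim}, $\limsup|\hat u|\leq \|\bar W_M\calD(t_M)^\dagger R_\textrm{m}\|\cdot\limsup|[x_\textrm{m};r]|\leq \kappa\|\bar W_M\|$, which yields $f(s)=\frac{c}{\lambda}\kappa s\in\mathcal{N}_0$. Strictly, \eqref{eW} is a $\limsup$ statement; I will state it that way. One subtlety is that $c,\lambda$ depend on $A_H$ and hence on the noise. This is acceptable because only membership of $f$ in $\mathcal{N}_0$ is claimed.

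The main obstacle I expect is Step 1: making the exponential convergence of $B_\textrm{s}\hat K(t)$ rigorous, in particular handling the non-converging kernel component of $\Psi$. My first attempt will use the identity $\calU(t_M)=\calU(t_M)\calD(t_M)^\dagger\calD(t_M)$, which holds by the kernel inclusion above. It gives $\hat K\,\hat L$ as $\calU(t_M)\calD(t_M)^\dagger\cdot\calD(t_M)\Psi(t)$, and the exponential convergence then follows directly from that of $\calD(t_M)\Psi$.
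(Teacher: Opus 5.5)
Your proposal is correct and follows the paper's proof: exponential convergence of $\hat A(t)$ via \eqref{V}, a Hurwitz limit from \eqref{matchingSNR} and Theorem~\ref{Tstability}, ISS of \eqref{e} by Lemma~\ref{Liss}, and then the bound on $\limsup|\hat u|$ from \eqref{lim}. The differences are only refinements: you justify the exponential convergence of $[\hat K\ \hat L]$ explicitly through $\calU(t_M)=\calU(t_M)\calD(t_M)^\dagger\calD(t_M)$, which also avoids the paper's loose claim that $V$ alone bounds $\Psi$ (it does not control the $\ker\calD(t_M)$ component), and you derive the asymptotic gain directly from \eqref{Phi}, yielding a linear $f$, instead of citing the Sontag--Wang result that ISS implies the asymptotic gain property.
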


\begin{proof}
	Suppose that \eqref{stability} holds.
	We first show that \mbox{$ e\in\calL_{\infty}^n $}, $ x\in\calL_{\infty}^n $ and $u\in\calL_{\infty}^m $.
	Note that $ x_\textrm{m}\in\calL_{\infty}^n $ since $ r\in\calL_{\infty}^p $ and $ A_\textrm{m} $ is Hurwitz.
	Further, according to \eqref{lf} and \eqref{V}, we have $ \Psi\in \calL_{\infty}^{2n,n+p} $, which implies $ \hat{K}\in\calL_{\infty}^{m,n} $ and $ \hat{L}\in\calL_{\infty}^{m,p} $ by \eqref{gain}.
	Hence, $ \hat{u}\in\calL_{\infty}^n $.
	Next, by Theorem~\ref{Tec}, \eqref{lf} and \eqref{V} implies that $ \hat{A}(t) $ converges to $ A_\textrm{s}+B_{\hat{K}} $ exponentially.	 
	By Theorem~\ref{Tstability}, we have that $ A_\textrm{s}+B_{\hat{K}} $ is Hurwitz.
	Then, by Lemma~\ref{Liss}, we have that \eqref{e} is \mbox{ISS}, implying that $ e\in\calL_{\infty}^n $.
	From the fact that $ x_\textrm{m}\in\calL_{\infty}^n $, we further have $ x=e+x_\textrm{m}\in\calL_{\infty}^n $.
	Then, by \eqref{input}, we have $ u\in\calL_{\infty}^m $.
	\\
	We next show that \eqref{eW} holds.
	According to \cite[Sect. I-B\&C]{sontag1996new}, if \eqref{e} is ISS, then \eqref{e} satisfies the asymptotic gain property, that is, there exists a function $ \hat{f} $ of class $ \mathcal{N}_0 $ such that for any $ e(0)\in\mathbb{R}^{n} $,
	\begin{equation*}
	\limsup_{t\to\infty} |e(t)|\leq \hat{f}\left( \limsup_{t\to\infty} |\hat{u}(t)|\right).
	\end{equation*}
	By \eqref{lim} and \eqref{hatu}, we have
	\begin{equation*}
	\limsup_{t\to\infty} |\hat{u}(t)| \leq \|\bar{W}_M\| \left\|\calD(t_M)^\dagger R_\textrm{m}\right\|
	\left\|\begin{bmatrix}
	x_\textrm{m}\\r
	\end{bmatrix}\right\|_\infty.
	\end{equation*}
	Since $ x_\textrm{m}\in\calL_{\infty}^n $, $ r\in\calL_{\infty}^p $, there exists a $ \delta>0 $ such that
	\begin{equation}
	\hat{f}\left( \limsup_{t\to\infty} |\hat{u}(t)|\right)  \leq \hat{f}\left( \delta\|\bar{W}_M\|\right) :=f\left( \|\bar{W}_M\|\right).
	\end{equation}
	Hence, we conclude that the function $ f $ of class $ \mathcal{N}_0 $ is such that \eqref{eW} holds.
\end{proof}

\subsection{Noise-free setting}
\label{Sscns}

We now specialize the presented analysis to the noise-free setting, i.e., $ q=0 $ and thus $\bar{W}_M=0$.
Hence, the following holds as a consequence of Theorem~\ref{Tec}.

\begin{corollary}\label{Cnc}
	Let $ q=0 $.
	Consider the data in \eqref{fisdM} generated by \eqref{cs} with the adaptive law \eqref{alp} and the input \eqref{input}.
	Then, if \eqref{image} holds, the control gains $ \hat{K}(t) $ and $ \hat{L}(t) $ in \eqref{gain} are such that \eqref{lime} holds.
\end{corollary}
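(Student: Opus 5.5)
This corollary follows by specializing Theorem~\ref{Tec} to $q=0$. The plan is to check that the noise-free setting satisfies the hypotheses of Theorem~\ref{Tec}, and then to show that the limit matching error it provides vanishes.

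\emph{Hypotheses.} When $q=0$, the matrix $E_\textrm{s}$ has no columns, $W_M$ is an empty matrix, and \eqref{fccsd} reduces to $X_M^\textrm{D}=A_\textrm{s}X_M+B_\textrm{s}U_M$. The data in \eqref{fisdM} are generated exactly as in Theorem~\ref{Tec}, using the same adaptive law \eqref{alp}, the same input \eqref{input} and the same gains \eqref{gain}. Together with the assumed image condition \eqref{image}, this means every hypothesis of Theorem~\ref{Tec} holds.

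\emph{Limit.} Applying Theorem~\ref{Tec}, for any initial conditions we obtain
\[
\lim_{t\to\infty}\begin{bmatrix} A_\textrm{m}-A_\textrm{s}-B_\textrm{s}\hat{K}(t) & B_\textrm{m}-B_\textrm{s}\hat{L}(t)\end{bmatrix}=\bar{W}_M\calD(t_M)^\dagger R_\textrm{m}.
\]
By definition,
\[
\bar{W}_M=\frac{1}{N+M}E_\textrm{s}W_M\begin{bmatrix}X_M\\X_M^\textrm{D}\end{bmatrix}^{\top}.
\]
With $q=0$ the product $E_\textrm{s}W_M$ is the zero $n\times(N+M)$ matrix, since it is a product through an inner dimension of size zero. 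Hence $\bar{W}_M=0$, the right-hand side above is zero, and \eqref{lime} follows.

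\emph{Main point of care.} The only delicate step is the formal convention $q=0$. One way to avoid any ambiguity about empty matrices is to treat the noise-free case as $w\equiv 0$ instead. Then $w^\textrm{f}\equiv 0$ by \eqref{wf}, so $W_M=0$ and $\bar{W}_M=0$, and the argument above applies unchanged. The proof of Theorem~\ref{Tec} itself never used $W_M\neq 0$: the last chain of equalities there simply drops the $E_\textrm{s}W_M$ term. So no further obstacle arises.
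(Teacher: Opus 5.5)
Your proposal is correct and follows the same route as the paper. The paper also treats the corollary as an immediate specialization of Theorem~\ref{Tec}: $q=0$ forces $\bar{W}_M=0$, so the right-hand side of \eqref{lim} vanishes and \eqref{lime} follows. Your remark about treating the case as $w\equiv 0$ to sidestep empty-matrix conventions is harmless extra care that does not change the argument.
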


Having shown exact parameter convergence in \eqref{lime}, we next present stability and boundedness in the noise-free case.
Because stability follows directly from Corollary~\ref{Cnc}, stability and boundedness are stated jointly as a corollary.
\begin{corollary}\label{Csb}
	Let $ q=0 $.
	Assume that \eqref{image} holds.
	For any $x(0),x_\textnormal{m}(0) \in \mathbb{R}^n$ and $\Psi(0)\in \mathbb{R}^{2n\times(n+p)}$, consider the adaptive law \eqref{alp} and the input \eqref{input}.
	Then, $\hat{K}$ in \eqref{gain} converges to a stabilizing gain.
	Moreover, the resulting trajectories satisfy $ x\in\calL_{\infty}^n $, $ u\in\calL_{\infty}^m $, $e\in\calL_{\infty}^n $ and
	$
	\lim_{t \rightarrow \infty} |e(t)| =0.
	$
\end{corollary}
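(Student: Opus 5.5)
By Corollary~\ref{Cnc}, condition \eqref{image} with $q=0$ yields \eqref{lime}. In particular, $\lim_{t\to\infty}(A_\textrm{s}+B_\textrm{s}\hat{K}(t))=A_\textrm{m}$, so $B_{\hat{K}}=A_\textrm{m}-A_\textrm{s}$. Since $A_\textrm{m}$ is Hurwitz by standing assumption, the limit closed-loop matrix is Hurwitz, i.e., $\hat{K}$ converges to a stabilizing gain. This argument is direct and does not need Theorem~\ref{Tstability}.

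For boundedness I will reuse the argument of Theorem~\ref{Tbound} rather than its hypotheses. The hypotheses of Theorem~\ref{Tbound} are awkward here: with $q=0$ one has $\bar{W}_M=0$, so \eqref{SNR} holds for every $\gamma>0$. One could take $\gamma$ small so that \eqref{stability} holds and $\Theta(Q_\gamma,A_\textrm{m})$ has no imaginary eigenvalues, but the direct route is cleaner.

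First, $x_\textrm{m}\in\calL_\infty^n$ because $A_\textrm{m}$ is Hurwitz and $r$ is bounded. Second, the Lyapunov function \eqref{lf} and estimate \eqref{V} from the proof of Theorem~\ref{Tec} give boundedness of $\Psi$. The same estimates give exponential convergence of $\calD(t_M)\tilde{\Psi}(t)$, and hence of $[\hat K\ \hat L]=\calU(t_M)\Psi(t)$, because $\calU(t_M)$ factors through $[X_M^\top\ (X_M^\textrm{D})^\top]^\top$ as in that proof. Consequently $\hat{A}(t)$ converges exponentially to the Hurwitz matrix $A_\textrm{m}$, and $\hat{u}$ in \eqref{hatu} is bounded. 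Lemma~\ref{Liss} then shows that \eqref{e} is ISS, so $e\in\calL_\infty^n$, $x=e+x_\textrm{m}\in\calL_\infty^n$, and $u\in\calL_\infty^m$ by \eqref{input}.

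For $\lim|e(t)|=0$, I will use the asymptotic gain property quoted in the proof of Theorem~\ref{Tbound}: $\limsup|e|\le\hat f(\limsup|\hat u|)$ with $\hat f\in\mathcal{N}_0$. By \eqref{lime} and boundedness of $(x_\textrm{m},r)$, $\hat u(t)\to0$. Since $\hat f(0)=0$, it follows that $e(t)\to0$. Equivalently, one can take \eqref{eW} with $\|\bar W_M\|=0$.

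The main point requiring care is the exponential convergence of $\hat{A}(t)$, as opposed to mere convergence, which Lemma~\ref{Liss} needs. I expect it to follow from \eqref{V}, since $V\ge c|\calD(t_M)\tilde\Psi|^2$ restricted to $\im\calD(t_M)$.
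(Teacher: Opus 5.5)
Your proposal is correct and follows the paper's proof: Corollary~\ref{Cnc} gives the Hurwitz limit $A_\textrm{m}$, and boundedness together with $e(t)\to0$ come from the argument of Theorem~\ref{Tbound} (bounded $\Psi$ via \eqref{lf} and \eqref{V}, exponential convergence of $\hat{A}(t)$, Lemma~\ref{Liss}, and the asymptotic gain property with $\hat{u}\to0$). The only difference is that the paper invokes Theorem~\ref{Tbound} directly by checking its hypotheses with $Q_\gamma=0$, which formally means $\gamma=0$ and lies outside the stated range $\gamma>0$. You instead rerun that argument with $A_\textrm{m}$ as the Hurwitz limit, which avoids this technicality, as your small-$\gamma$ remark would also do.
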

\begin{proof}
	By Corollary~\ref{Cnc}, \eqref{lime} holds.
	Then, $\hat{K}$ in \eqref{gain} converges to a stabilizing gain since $ A_\textrm{m} $ is Hurwitz.\\
	Since $ q=0 $, we have $ Q_\gamma=0 $ and $ \bar{W}_M=0 $.
	Since $ A_\textrm{m} $ is Hurwitz, it follows that $ \Theta(0,A_\textrm{m}) $ has no eigenvalues on the imaginary axis and $ \Lambda(0,A_\textrm{m},0)<0 $.
	Then, by Theorem~\ref{Tbound}, we have $ x\in\calL_{\infty}^n $, $ u\in\calL_{\infty}^m $, $e\in\calL_{\infty}^n $ and $ \lim_{t \rightarrow \infty} |e(t)| \leq f(0)=0 $ since the function $ f $ is of class $ \mathcal{N}_0 $.
\end{proof}

We note that noise-free MRAC approaches in the literature attain convergence to a solution of the matching equations under structural assumptions on the system, such as full-rank of $ B_\textrm{s} $, which is required to ensure that the matching equations \eqref{mc} have a unique solution \cite{chowdhary2013concurrent,roy2018combined}.
No such structural assumptions are required here.
It is also worth mentioning that existing noise-free approaches with relaxed excitation conditions on data \cite{lavretsky2009combined,chowdhary2010concurrent,chowdhary2013concurrent,cho2018composite,roy2018combined,lee2019concurrent}, typically require knowledge of the input matrix $ B_\textrm{s} $, with the exception of \cite{roy2018combined}.
There, persistence of excitation is relaxed to initial excitation, implying that a set of noise-free data can be collected such that
\begin{equation}\label{ranknm}
\rank \begin{bmatrix}
X_M\\U_M
\end{bmatrix} = n+m.
\end{equation}
From Lemma~\ref{Limage} (with \mbox{$q=0$}), we have that \eqref{ranknm} is sufficient for \eqref{image} to hold.
However, \eqref{ranknm} becomes necessary only in some special cases, meaning that \eqref{image} is weaker than \eqref{ranknm}.
This is formalized in the following result.

\begin{proposition}\label{Pweaker}
	Let $ q=0 $.
	Assume that \eqref{mc} holds for some $ K\in\mathbb{R}^{m\times n} $ and $ L\in\mathbb{R}^{m\times p} $.
	Then, for any \mbox{$M \geq 0$} and any dataset $ \left( U_M,X_M,X_M^\textnormal{D}\right) $ satisfying \eqref{image}, also \eqref{ranknm} holds if and only if $ \rank B_\textnormal{m}=m $.
\end{proposition}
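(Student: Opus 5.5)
The plan is to work entirely with the column space of the stacked data $S:=\begin{bmatrix}X_M\\U_M\end{bmatrix}$ and to translate \eqref{image} into a statement about $S$. Since $q=0$, \eqref{fccsd} gives $X_M^\textrm{D}=A_\textrm{s}X_M+B_\textrm{s}U_M$, hence
$$\begin{bmatrix}X_M\\X_M^\textrm{D}\end{bmatrix}=T S,\qquad T:=\begin{bmatrix}I&0\\A_\textrm{s}&B_\textrm{s}\end{bmatrix}.$$
As in the proof of Lemma~\ref{Limage}, $\im\calD(t_M)=\im TS$. Because \eqref{mc} holds, we can factor $R_\textrm{m}=TG$ with
$$G:=\begin{bmatrix}I&0\\K&L\end{bmatrix}\in\mathbb{R}^{(n+m)\times(n+p)}.$$
Thus \eqref{image} is equivalent to $\im TG\subseteq\im TS$. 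Throughout, I will use $\ker T=\{0\}\times\ker B_\textrm{s}$ and $\rank TG=n+\rank B_\textrm{s}L=n+\rank B_\textrm{m}$.

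\emph{Sufficiency ($\rank B_\textrm{m}=m\Rightarrow$ \eqref{ranknm}).} Since $m\ge p$ and $\rank B_\textrm{m}\le p$, the assumption forces $p=m$. From $B_\textrm{m}=B_\textrm{s}L$ we then get $\rank B_\textrm{s}=m$ and $\rank L=m$. Hence $T$ is injective and $G$ is square and invertible, so $\im TG=\im T$, which has dimension $n+m$. The inclusion $\im T\subseteq\im TS\subseteq\im T$ then gives $\rank TS=n+m$. Since $T$ is injective, $\rank S=\rank TS=n+m$, which is \eqref{ranknm}.

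\emph{Necessity.} Suppose $\rank B_\textrm{m}<m$; I will exhibit a dataset that satisfies \eqref{image} but violates \eqref{ranknm}. Let $k:=\rank TG=n+\rank B_\textrm{m}<n+m$. Choose vectors $s_1,\dots,s_k\in\mathbb{R}^{n+m}$ such that $Ts_1,\dots,Ts_k$ form a basis of $\im TG$; this is possible since $\im TG\subseteq\im T$. Let the data matrix $S$ have these columns, possibly repeated or padded with further columns from their span to reach the required number $N+M$ of samples. Set $X_M^\textrm{D}:=A_\textrm{s}X_M+B_\textrm{s}U_M$. Then $\im TS=\im TG\supseteq\im R_\textrm{m}$, so \eqref{image} holds, while $\rank S=k<n+m$. (When $\rank L<m$ one may simply take $S=G$.)

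The main obstacle I expect is justifying that such an $S$ is an admissible dataset. The statement quantifies over datasets $(U_M,X_M,X_M^\textrm{D})$, which I will treat as arbitrary data consistent with \eqref{fccsd}. If realizability by actual filtered trajectories is required, I would first try to construct an input on $[0,\calT]$ that makes the filtered pair $(x^\textrm{f},u^\textrm{f})$ remain in the $k$-dimensional subspace $\im S$ at the sampling instants. The most natural candidate is feedback that keeps $(x,u)$ in $\im S$ (for example via $u=Kx$-type choices within that subspace), so that the filtered signals inherit this property.
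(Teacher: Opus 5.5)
Your sufficiency argument is correct, though longer than needed: $R_\textrm{m}=\calD(t_M)V=TS\begin{bmatrix}X_M\\X_M^\textrm{D}\end{bmatrix}^{\!\top}V$ already gives $n+m=\rank R_\textrm{m}\le\rank S$, without using injectivity of $T$.

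The gap is in the necessity direction, and it is exactly the point you flag but do not resolve. In this proposition a dataset is not an arbitrary triple obeying \eqref{fccsd}. It consists of samples of the filters \eqref{xf}--\eqref{xdf} driven by a trajectory of \eqref{cs}. The claim is that data one can actually collect may satisfy \eqref{image} without allowing identification of $(A_\textrm{s},B_\textrm{s})$, so prescribing matrices with a given column space proves nothing yet. Your fallback also covers only half of what is needed. A feedback that confines $(x,u)$, and hence $(x^\textrm{f},u^\textrm{f})$, to a proper subspace makes \eqref{ranknm} fail. But \eqref{image} additionally requires the sampled filtered data to \emph{span} enough that $\im\begin{bmatrix}X_M\\X_M^\textrm{D}\end{bmatrix}\supseteq\im R_\textrm{m}$. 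That excitation requirement is the substantive part of the proof, and it is missing.

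The paper closes the gap as follows. Since $\im B_\textrm{m}\subseteq\im B_\textrm{s}$, one may always take $L:=B_\textrm{s}^\dagger B_\textrm{m}$. Then $\rank L\le\rank B_\textrm{m}<m$, hence $\rank G=n+\rank L<n+m$, which also removes your case distinction on $\rank L$. Run the plant with $u=Kx+Lr$ from $x(0)=x_\textrm{m}(0)$. By \eqref{mc}, $x\equiv x_\textrm{m}$, and by linearity of the filters, $u^\textrm{f}=Kx^\textrm{f}+Lr^\textrm{f}$ and $x^\textrm{df}=A_\textrm{m}x^\textrm{f}+B_\textrm{m}r^\textrm{f}$. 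Therefore
\[
\begin{bmatrix}X\\U\end{bmatrix}=G\begin{bmatrix}X_\textrm{m}\\R\end{bmatrix},\qquad
\begin{bmatrix}X\\X^\textrm{D}\end{bmatrix}=R_\textrm{m}\begin{bmatrix}X_\textrm{m}\\R\end{bmatrix},
\]
where $X_\textrm{m}$ and $R$ collect the filtered samples of $x_\textrm{m}$ and $r$.

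The reference is then chosen piecewise constant, with period $T$ such that $\lambda_j-\lambda_l\neq 2a\pi i/T$ for all distinct eigenvalues of $A_\textrm{m}$ and all $a\in\mathbb{N}$. Its levels are selected online as in \cite[Theorem~1]{wang2025experiment}, which is where the standing controllability of $(A_\textrm{m},B_\textrm{m})$ enters. This makes the $n+p$ samples at $T,2T,\dots,(n+p)T$ satisfy $\rank\begin{bmatrix}X_\textrm{m}\\R\end{bmatrix}=n+p$. Hence $\im\begin{bmatrix}X\\X^\textrm{D}\end{bmatrix}=\im R_\textrm{m}$, so \eqref{image} holds with $M=0$. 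At the same time $\rank\begin{bmatrix}X\\U\end{bmatrix}\le\rank G<n+m$, so \eqref{ranknm} fails.
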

\begin{proof}
	\emph{Sufficiency:}\\
	Suppose $ \rank B_\textrm{m}=m $.
	Then, we have \mbox{$ \rank R_\textrm{m}=n+m $}.
	For any \mbox{$M \geq 0$}, we consider a set of data $\left( U_M,X_M,X_M^\textrm{D}\right) $ in \eqref{fccsd} satisfying \eqref{image}.
	Then, there exists a matrix $ V\in\mathbb{R}^{2n\times (n+p)} $ such that
	\begin{equation*}
	R_\textrm{m} = \calD(t_M) V =
	\begin{bmatrix}
	I&0\\A_\textrm{s}&B_\textrm{s}
	\end{bmatrix}
	\begin{bmatrix}
	X_M \\ U_M
	\end{bmatrix}
	\begin{bmatrix}
	X_M \\ X_M^\textrm{D}
	\end{bmatrix}^{\!\top\!} V.
	\end{equation*}
	As a result,
	\begin{equation*}
	n+m= \rank R_\textrm{m} \leq \rank \begin{bmatrix}
	X_M\\ U_M
	\end{bmatrix} \leq n+m,
	\end{equation*}
	implying that \eqref{ranknm} holds.
	
	\emph{Necessity:}\\
	Suppose on the contrary that $\rank (B_\textrm{m}) \neq m$.
	We will show that there exist \mbox{$M \geq 0$} and dataset $ \left( U_M,X_M,X_M^\textnormal{D}\right) $ such that \eqref{image}, but not \eqref{ranknm}, holds.\\
	Since $p \leq m$, we have that $\rank (B_m) < m$.
	Let $(K,L)$ be a solution of the matching equations \eqref{mc}.
	We may assume without loss of generality that $\rank(L) < m$.
	Indeed, we have $\im B_\textrm{m} \subseteq \im B_\textrm{s}$ and thus $L := B_\textrm{s}^\dagger B_\textrm{m}$ satisfies the equation $B_\textrm{s} L = B_\textrm{m}$.
	Note that the matrix $L$ satisfies $\rank (L) \leq \rank (B_\textrm{m}) < m$.
	\\
	Let $ \lambda_1, \lambda_2, \dots, \lambda_n $ be the eigenvalues of $ A_\textrm{m} $ and let $ T>0 $ be such that $ (n+p)T\leq \calT $ and
	\begin{equation*}
	\lambda_{j} - \lambda_{l} \neq \frac{2a\pi}{T}i
	\end{equation*}
	holds for any distinct $ j,l \in \{1,2,\dots,n\} $ and $ a\in\mathbb{N} $.
	Then, in line with \cite[Theorem 1]{wang2025experiment}, let us design the reference signals to be piecewise constant with sampling time $T$: for any \mbox{$ t\in [jT,(j+1)T): j \in \{0,1,\dots,n+p-1\} $}, design $ r(t)=\mu_j $, where $ \mu_j\in \mathbb{R}^{p} $.
	At time $ t=0 $, select a nonzero $ \mu_0 $ such that for any $ t\in [0,T) $, $ r(t)=\mu_0 $. 
	At time $ t=jT $, for $ j \in \{1,2,\dots,n+p-1\} $,
	\begin{itemize}
		\item if
		$
		x_\textrm{m}(jT) \notin \im 
		\left[ 
		x_\textrm{m}(0)\ x_\textrm{m}(T)\ \cdots\ x_\textrm{m}((j-1)T)
		\right],
		$
		then select $ \mu_j $ arbitrarily;
		\item if 
		$
		x_\textrm{m}(jT) \in \im 
		\left[ 
		x_\textrm{m}(0)\ x_\textrm{m}(T)\ \cdots\ x_\textrm{m}((j-1)T)
		\right],
		$
		then there exist $ \eta \in \mathbb{R}^m $ and $ \xi\in \mathbb{R}^n $ with $ \eta\neq0 $ such that
		\begin{equation*}
		\begin{bmatrix}
		\xi^\top&\eta^\top
		\end{bmatrix}
		\begin{bmatrix}
		x_\textrm{m}(0)&x_\textrm{m}(T)&\cdots&x_\textrm{m}((j-1)T) \\
		r(0)&r(T)&\cdots&r((j-1)T)
		\end{bmatrix}=0.
		\end{equation*}
		In this case, select $ \mu_j $ such that
		$$
		\xi^{\top} x_\textrm{m}(jT) + \eta^{\top} \mu_j \neq 0.
		$$
	\end{itemize}
	Then, with the low-pass filters
	\begin{align}
	\label{fxm}
	\dot{x}_\textrm{m}^\textrm{f}(t)&=-\rho{x}_\textrm{m}^\textrm{f}(t)+x_\textrm{m}(t), 
	& x_\textrm{m}^\textrm{f}(0)&=0,\\
	\label{fr}
	\dot{r}^\textrm{f}(t)&=-\rho{r}^\textrm{f}(t)+r(t), 
	& r^\textrm{f}(0)&=0,
	\end{align}
	one can obtain from \cite[Theorem 1]{wang2025experiment} that the resulting filtered data satisfy
\begin{equation}\label{ranknmf}
	\rank
	\begin{bmatrix}
	x_\textrm{m}^\textrm{f}(T)& x_\textrm{m}^\textrm{f}(2T)& \cdots& x_\textrm{m}^\textrm{f}((n+p)T)\\
	r^\textrm{f}(T)& r^\textrm{f}(2T)& \cdots& r^\textrm{f}((n+p)T)
	\end{bmatrix}=n+p.
\end{equation}
	We now consider the low-pass filter
	\begin{equation}\label{fxmd}
	\dot{x}_\textrm{m}^\textrm{df}(t)=-\rho{x}_\textrm{m}^\textrm{df}(t)+\dot{x}_\textrm{m}(t), \quad
	x_\textrm{m}^\textrm{df}(0)=0.
	\end{equation}
	Then, it can be verified that for any $ j\in\{1,2,\dots,n+p\} $, $ {x}_\textrm{m}^\textrm{df}(jT)=A_\textrm{m}x_\textrm{m}^\textrm{f}(jT)+B_\textrm{m}r^\textrm{f}(jT) $.
	By denoting
	\begin{equation*}
	\begin{aligned}
	R:=&\begin{bmatrix}
	r(T)&r(2T)&\cdots&r((n+p)T)
	\end{bmatrix},\\
	X_\textrm{m}:=&\begin{bmatrix}
	x^\textrm{f}_\textrm{m}(T)&x^\textrm{f}_\textrm{m}(2T)&\cdots&x^\textrm{f}_\textrm{m}((n+p)T)
	\end{bmatrix},\\
	X^\textrm{D}_\textrm{m}:=&\begin{bmatrix}
	x^\textrm{f}_\textrm{m}(T)&x^\textrm{f}_\textrm{m}(2T)&\cdots&x^\textrm{f}_\textrm{m}((n+p)T)
	\end{bmatrix},
	\end{aligned}
	\end{equation*}
	we have 
	$$ X^\textrm{D}_\textrm{m}=A_\textrm{m}X_\textrm{m}+B_\textrm{m}R, $$ 
	implying that
	\begin{equation}\label{m}
	\begin{bmatrix}
	X_\textrm{m} \\ X^\textrm{D}_\textrm{m}
	\end{bmatrix}
	=R_\textrm{m}
	\begin{bmatrix}
	X_\textrm{m}\\R
	\end{bmatrix}.
	\end{equation}
	Therefore, \eqref{ranknmf} and \eqref{m} imply that
	\begin{equation}\label{xm}
	\im R_\textrm{m} =
	\im \begin{bmatrix}
	X_\textrm{m} \\ X_\textrm{m}^\textrm{D}
	\end{bmatrix}.
	\end{equation}
	Since $ (K,L) $ is a solution of the matching equations \eqref{mc}, we have from \eqref{m} that
	\begin{equation}
	\begin{bmatrix}
	X_\textrm{m}\\ X_\textrm{m}^\textrm{D}
	\end{bmatrix}
	=
	\begin{bmatrix}
	I&0\\A_\textrm{s}&B_\textrm{s}
	\end{bmatrix}
	\begin{bmatrix}
	I&0\\K&L
	\end{bmatrix}
	\begin{bmatrix}
	X_\textrm{m}\\R
	\end{bmatrix}.
	\end{equation}
	By defining the input $ u(t)=Kx_\textrm{m}(t)+Lr(t) $, it can be verified with the filter \eqref{uf} that
	\begin{equation*}
	U_\textrm{m}:=\begin{bmatrix}
	u(T)&u(2T)&\cdots&u((n+p)T)
	\end{bmatrix}=KX_\textrm{m}+LR.
	\end{equation*}
	The matching equations \eqref{mc} give
	$$
	\begin{aligned}
	\dot{x}(t)
	&= A_\textrm{s}x(t)+B_\textrm{s}u(t)\\
	&= A_\textrm{s}x(t)+B_\textrm{s}Kx(t)+B_\textrm{s}Lr(t)\\
	&= A_\textrm{m}x(t)+B_\textrm{m}r(t).
	\end{aligned}
	$$
	Then, $ (u,x_\textrm{m}) $ could be signals generated by \eqref{cs} under the initial condition $ x_0=x_\textrm{m0} $.
	Hence, the set of data $ (U_\textrm{m},X_\textrm{m},X_\textrm{m}^\textrm{D}) $ can be generated by system \eqref{cs} and the low-pass filters \eqref{xd}-\eqref{uf}.
	Since $ \rank (L) < m$, we have
	\begin{equation*}
	\rank\left(\begin{bmatrix}
	I&0\\K&L
	\end{bmatrix}\right)<n+m,
	\end{equation*}
	and therefore
	\begin{equation}\label{xmu}
	\rank\left(\begin{bmatrix}
	X_\textrm{m}\\U_\textrm{m}
	\end{bmatrix}\right)<n+m.
	\end{equation}
	By \eqref{xm}, we have that \eqref{image}, but not \eqref{ranknm}, holds for the data $ \left( U_M,X_M,X_M^\textrm{D}\right)  = (U_\textrm{m},X_\textrm{m},X^\textrm{D}_\textrm{m}) $ with $M=0$, resulting in a contradiction.
	Hence, we conclude that $ \rank(B_\textrm{m})=m $.
\end{proof}

\begin{remark}
	Rank conditions analogous to \eqref{ranknm} play an important role in system identification and data-driven control, where different types of data have been considered, such as samples of continuous-time trajectories \cite{de2019formulas,hu2025data}, coefficient matrices corresponding to certain basis functions \cite{rapisarda2023orthogonal}, and filtered data of continuous-time trajectories \cite{ohta2024data,wang2025experiment}.
	The condition \eqref{ranknm} allows to uniquely identify the pair $(A_\textnormal{s}, B_\textnormal{s})$ in \eqref{cs} from noise-free data \cite{wang2025experiment}. 
	Proposition~\ref{Pweaker} shows that \eqref{image} is weaker than \eqref{ranknm}, implying that \eqref{image} is weaker than requiring the collected data to uniquely identify the system.
\end{remark}




\section{Simulation}\label{Ss}

\begin{table*}[b]
	\centering
	\begin{tabular}{c|c|cc|cc}
		\toprule
		& \multicolumn{3}{c|}{Percentage of runs in which $\Lambda(Q_\gamma,A_\textnormal{m},0)<0$} & \multicolumn{2}{c}{Percentage of runs in which $ A_\textrm{s}+B_\textrm{s}\hat{K}(t) $ is Hurwitz} \\
		\midrule
		& with offline & \multicolumn{2}{c|}{with online data ($M=1200$) and} &  \multicolumn{2}{c}{$ A_\textrm{s}+B_\textrm{s}\hat{K}(t) $ evaluated at $t=30\,\mathrm{s}$ with} \\
		& data ($M=0$) & $ r(t)=[\sin(t),\cos(t)]^\top\! $ & $ r(t)=[0.1,0.1]^\top $ & \quad $ r(t)=[\sin(t),\cos(t)]^\top\! $ & $ r(t)=[0.1,0.1]^\top $ \\
		\midrule
		$\sigma=0.02$ & 100\% & 100\%  & 100\% & 100\% & 100\% \\
		$\sigma=0.06$ & 88.2\% & 100\%  & 91.4\% & 100\%  & 100\% \\
		$\sigma=0.2$ & 17.2\%  & 87.6\% & 30.2\% & 100\%  & 100\% \\
		$\sigma=0.6$ & 0.4\%  & 7.8\% & 1.2\% & 100\%  & 100\% \\
		$\sigma=2$ & 0\%  & 0\% & 0\% & 98.2\% & 98.2\% \\
		$\sigma=6$ & 0\% & 0\% & 0\%  & 88.6\% & 89\% \\
		$\sigma=20$ & 0\% & 0\% & 0\%  & 79.2\% & 78.6\%\\
		\bottomrule
	\end{tabular}
	\vspace{1ex}
	\label{table}
\end{table*}

\begin{figure*}[b]
	\centering
	\subfigure[Offline data and $ \sigma=0.06 $]
	{\label{dsinoff003}
		\includegraphics[trim=30bp 255bp 60bp 265bp, clip, height=3.5cm]{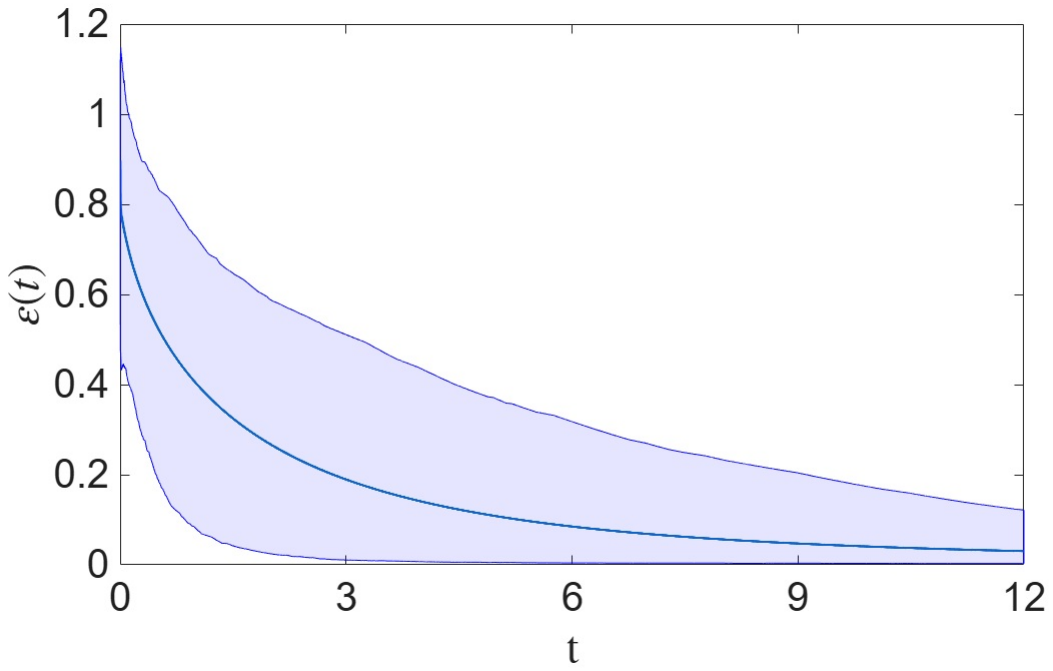} }
	\subfigure[Offline data and $ \sigma=0.2 $]
	{\label{dsinoff03}
		\includegraphics[trim=30bp 255bp 60bp 265bp, clip, height=3.5cm]{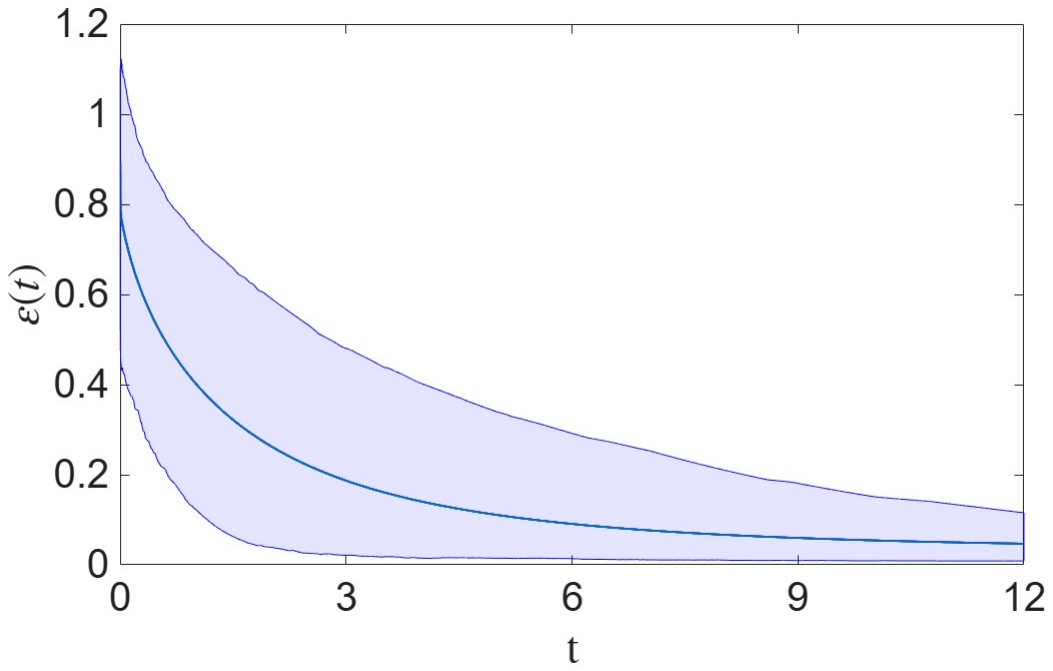} }
	\subfigure[Offline data and $ \sigma=0.6 $]
	{\label{dsinoff3}
		\includegraphics[trim=30bp 255bp 60bp 265bp, clip, height=3.5cm]{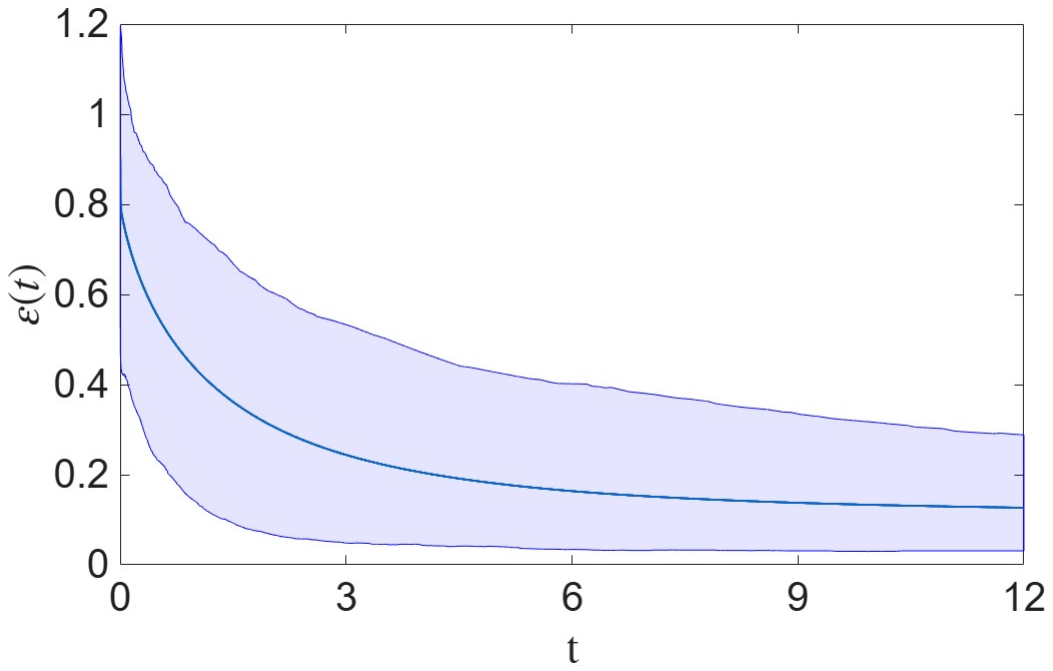} }
	\subfigure[Online data and $ \sigma=0.06 $]
	{\label{dsin003}
		\includegraphics[trim=30bp 255bp 60bp 265bp, clip, height=3.5cm]{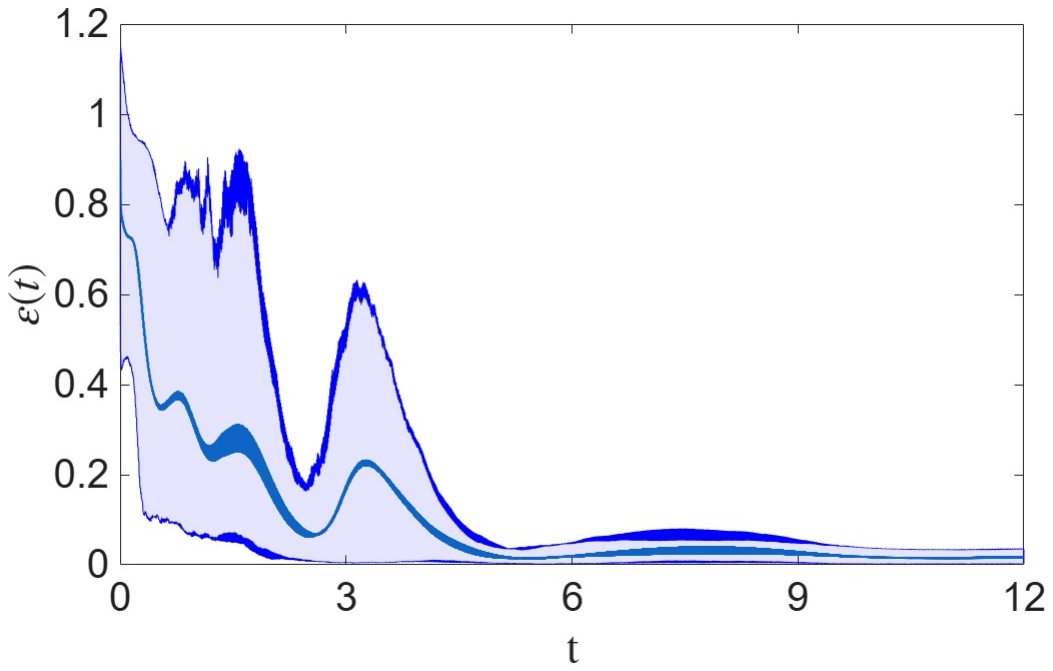} }
	\subfigure[Online data and $ \sigma=0.2 $]
	{\label{dsin03}
		\includegraphics[trim=30bp 255bp 60bp 265bp, clip, height=3.5cm]{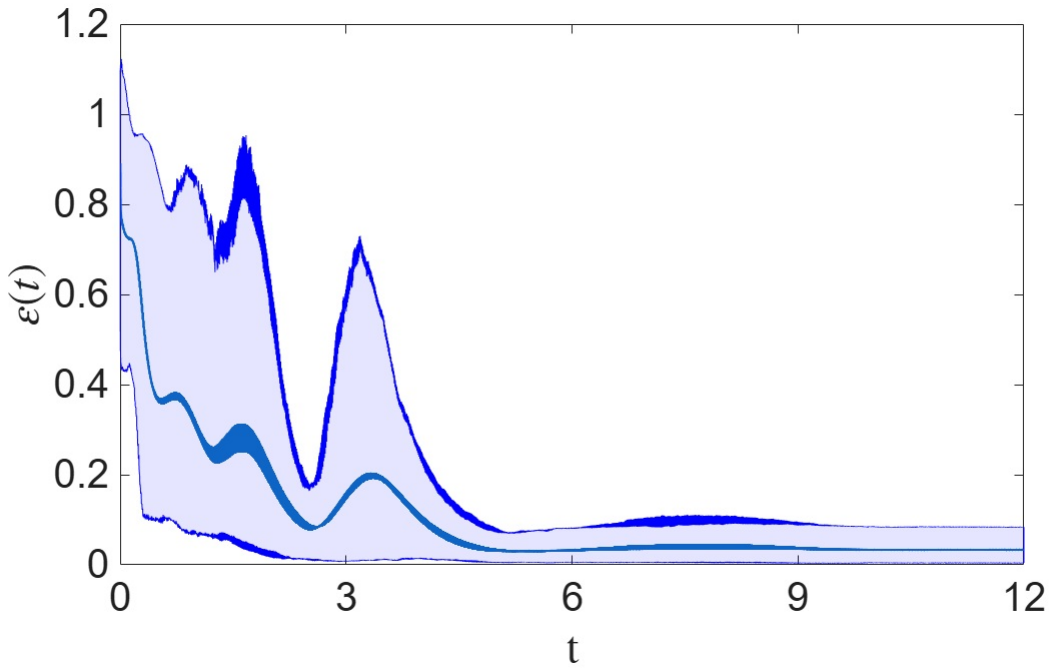} }
	\subfigure[Online data and $ \sigma=0.6 $]
	{\label{dsin3}
		\includegraphics[trim=30bp 255bp 60bp 265bp, clip, height=3.5cm]{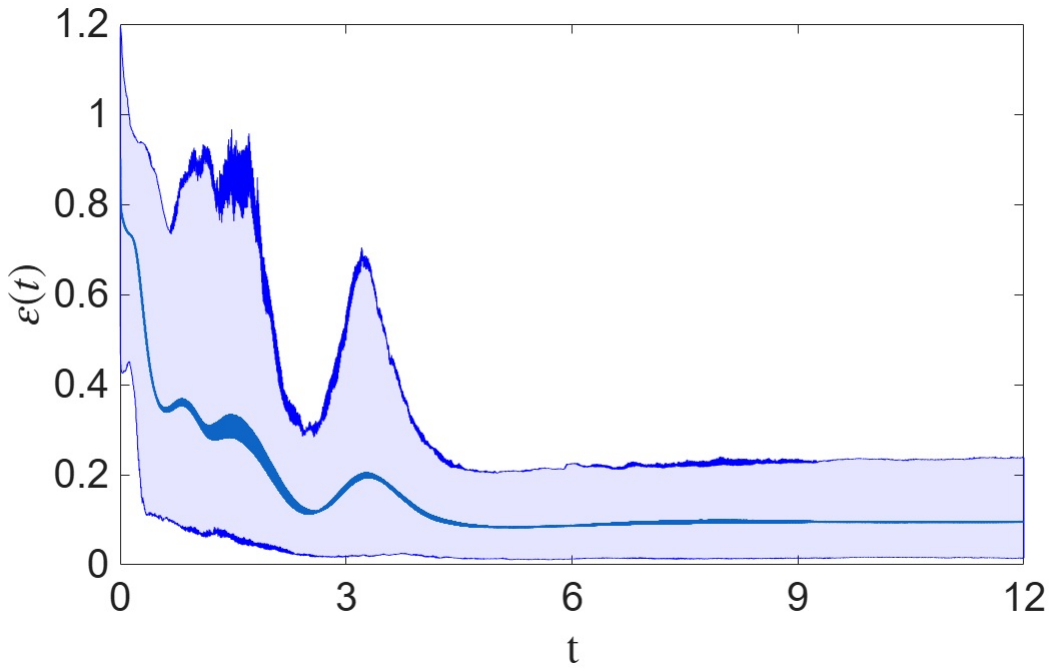}}
	\caption{The average and 90\% confidence intervals of the normalized matching errors $ \varepsilon(t) $ for $ r(t)=[\sin(t),\cos(t)]^\top $ under different noise levels ($\sigma=0.06,0.2,0.6$) and data collection (offline or online).}
	\label{Fdsin}
\end{figure*}

\begin{figure*}[b]
	\centering
	\subfigure[Offline data and $ \sigma=0.06 $]
	{\label{esinoff003}
		\includegraphics[trim=30bp 255bp 60bp 265bp, clip, height=3.5cm]{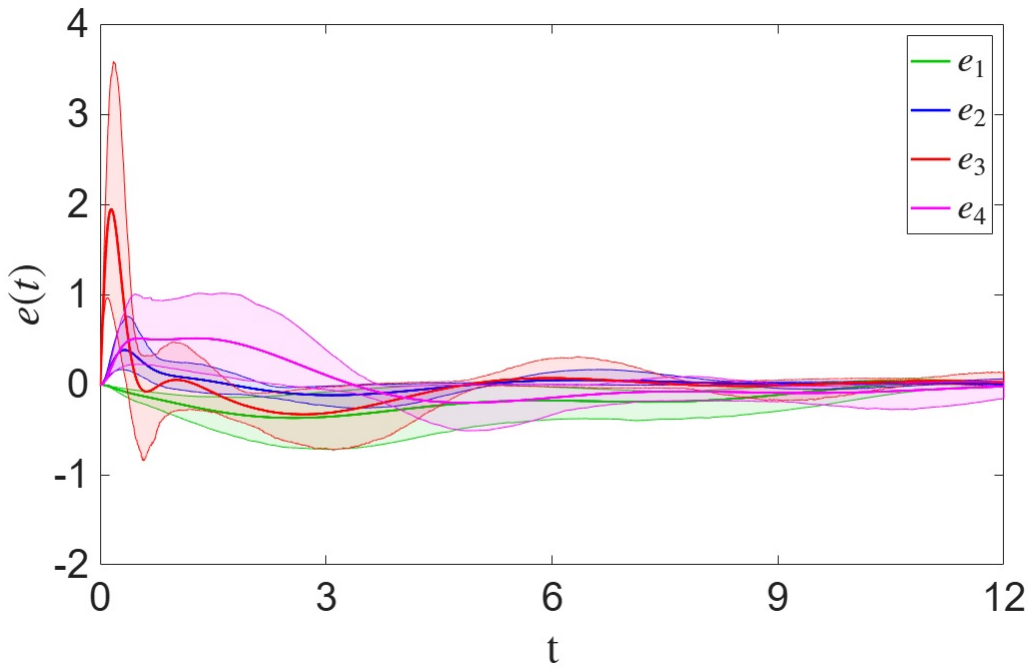} }
	\subfigure[Offline data and $ \sigma=0.2 $]
	{\label{esinoff03}
		\includegraphics[trim=30bp 255bp 60bp 265bp, clip, height=3.5cm]{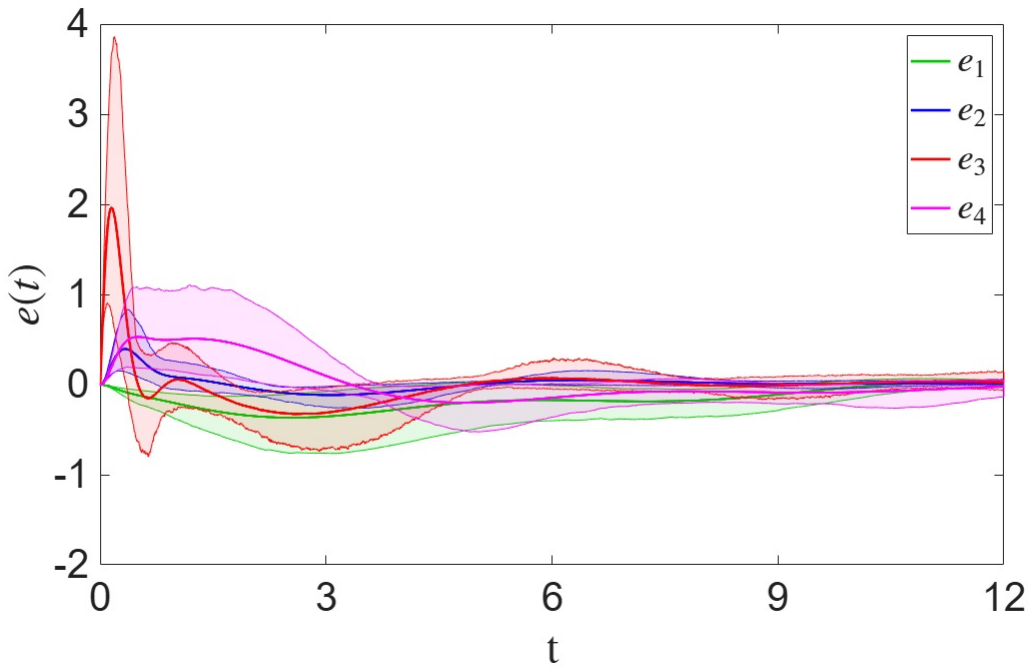} }
	\subfigure[Offline data and $ \sigma=0.6 $]
	{\label{esinoff3}
		\includegraphics[trim=30bp 255bp 60bp 265bp, clip, height=3.5cm]{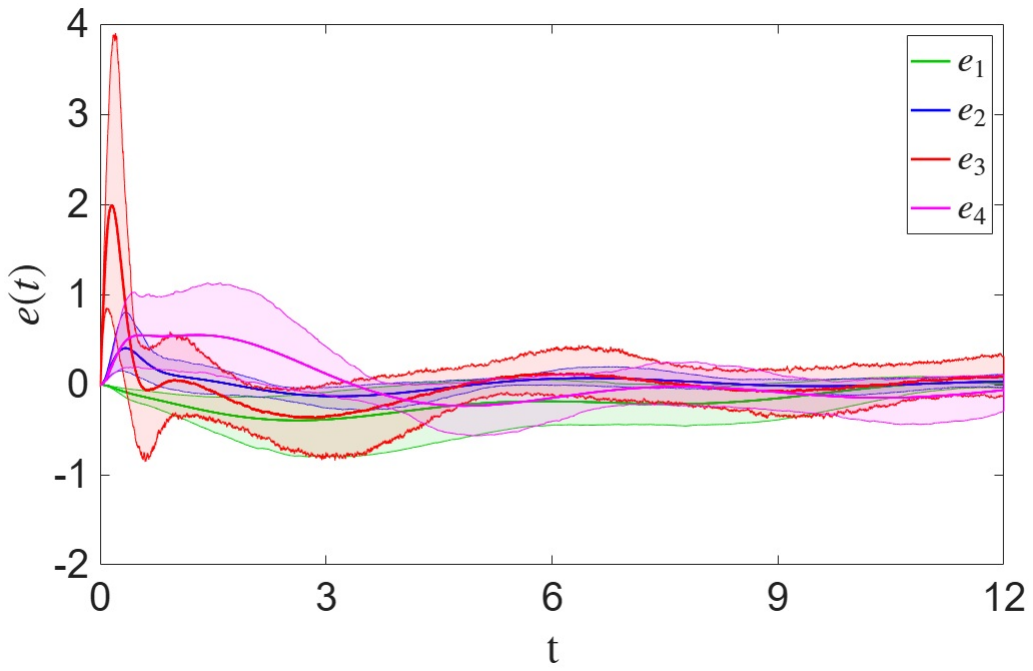} }
	\subfigure[Online data and $ \sigma=0.06 $]
	{\label{esin003}
		\includegraphics[trim=30bp 255bp 60bp 265bp, clip, height=3.5cm]{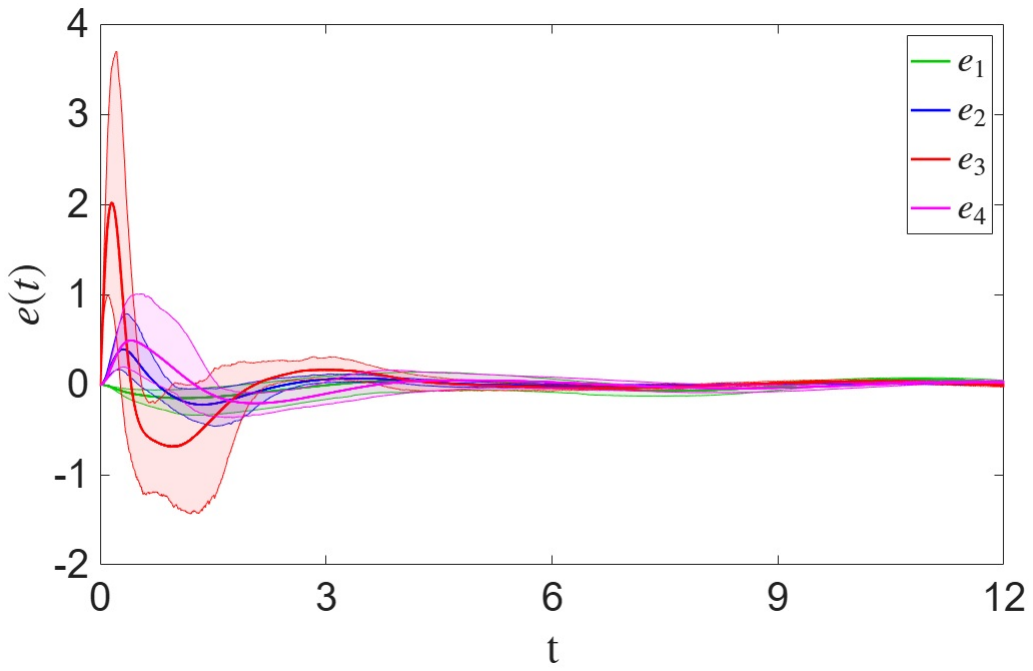} }
	\subfigure[Online data and $ \sigma=0.2 $]
	{\label{esin03}
		\includegraphics[trim=30bp 255bp 60bp 265bp, clip, height=3.5cm]{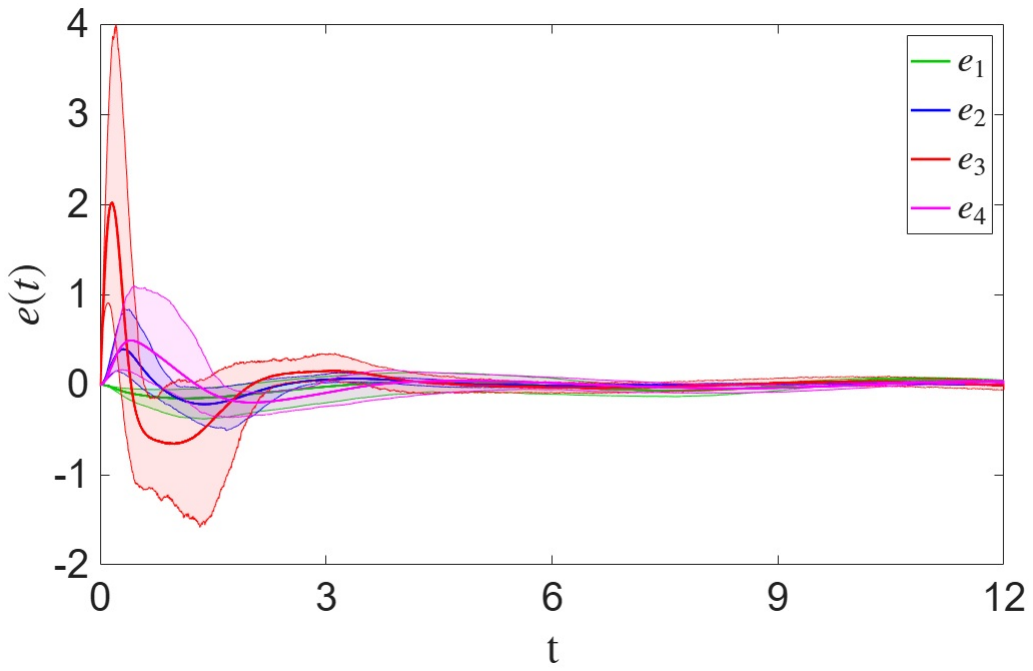} }
	\subfigure[Online data and $ \sigma=0.6 $]
	{\label{esin3}
		\includegraphics[trim=30bp 255bp 60bp 265bp, clip, height=3.5cm]{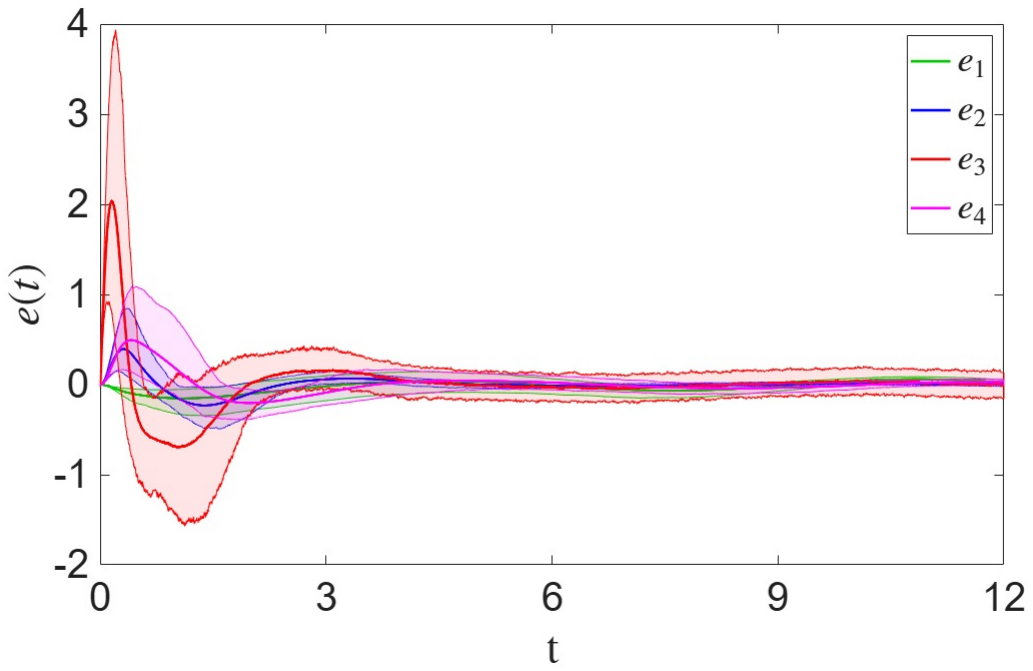} }
	\caption{The average and 90\% confidence intervals of the tracking errors $e(t)$ for $ r(t)=[\sin(t),\cos(t)]^\top $ under different noise levels ($\sigma=0.6,0.2,0.06$) and data collection (offline or online).}
	\label{Fesin}
\end{figure*}

\begin{figure*}[b]
	\centering
	\subfigure[Offline data and $ \sigma=0.06 $]
	{\label{dconoff003}
		\includegraphics[trim=30bp 255bp 60bp 265bp, clip, height=3.5cm]{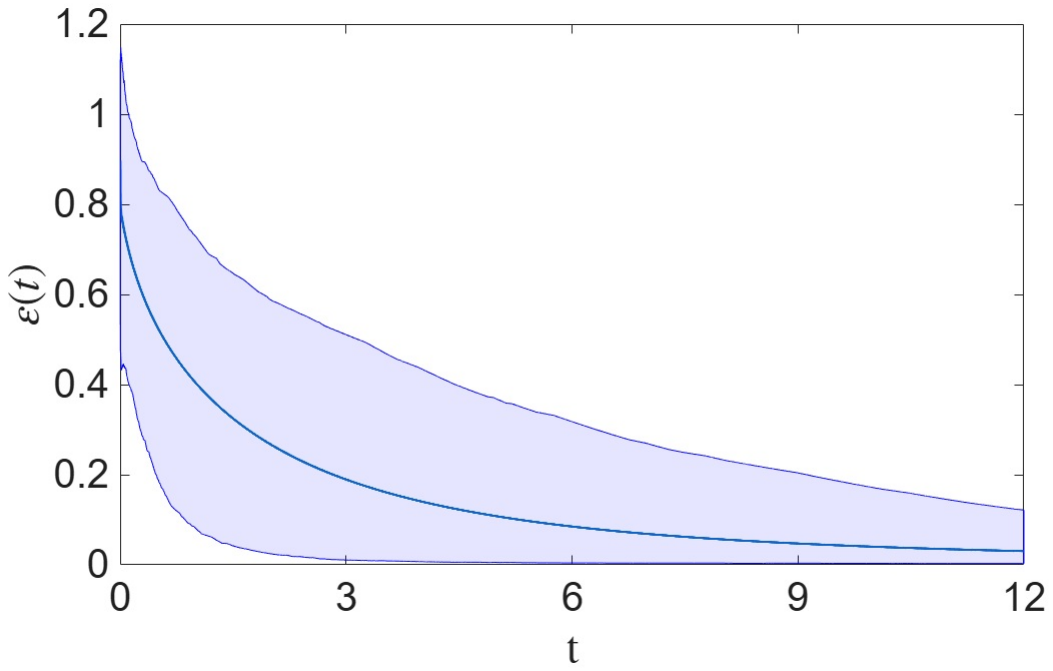} }
	\subfigure[Offline data and $ \sigma=0.2 $]
	{\label{dconoff03}
		\includegraphics[trim=30bp 255bp 60bp 265bp, clip, height=3.5cm]{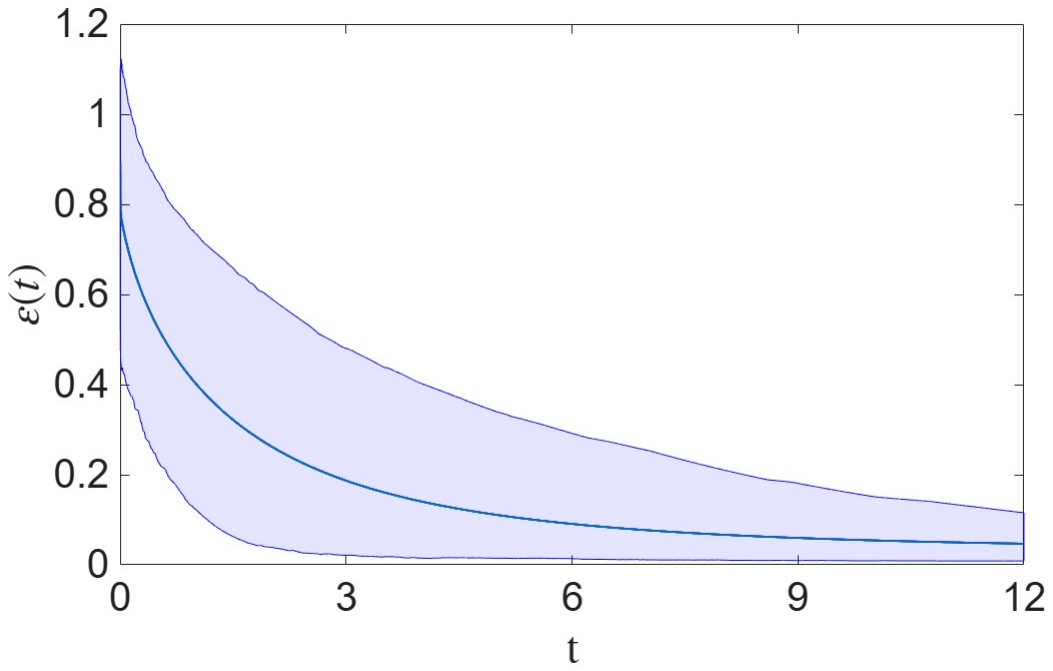} }
	\subfigure[Offline data and $ \sigma=0.6 $]
	{\label{dconoff3}
		\includegraphics[trim=30bp 255bp 60bp 265bp, clip, height=3.5cm]{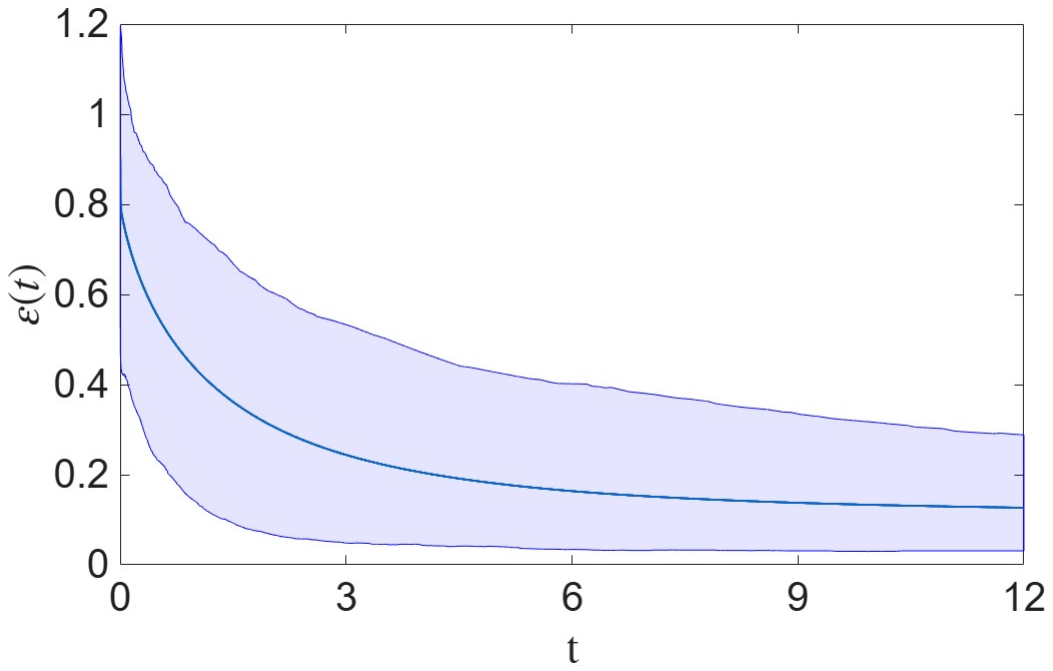} }
	\subfigure[Online data and $ \sigma=0.06 $]
	{\label{dcon003}
		\includegraphics[trim=30bp 255bp 60bp 265bp, clip, height=3.5cm]{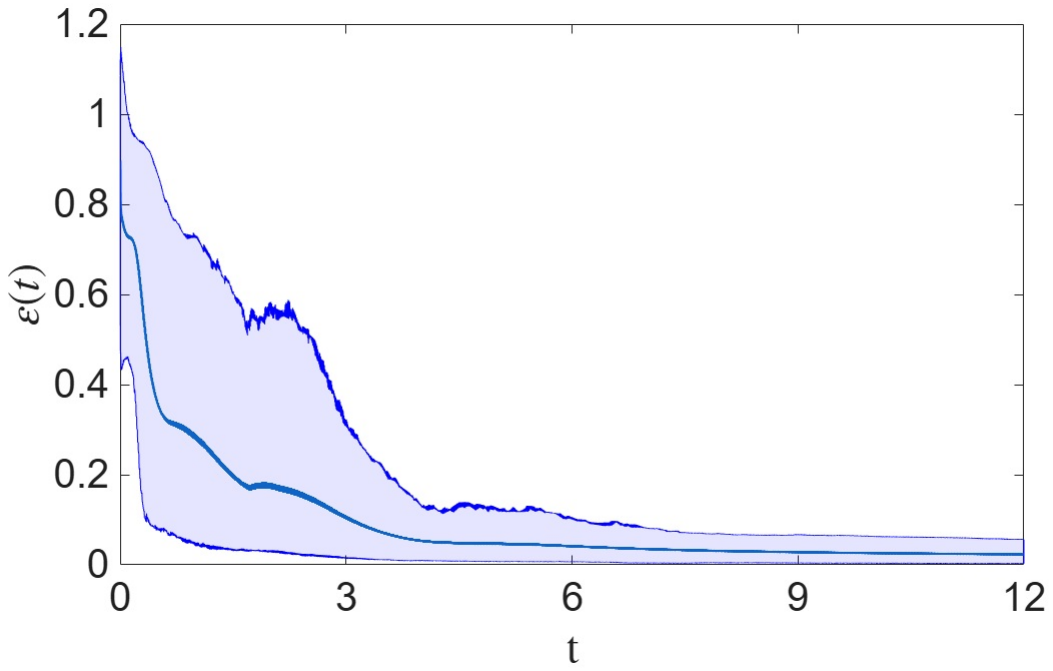} }
	\subfigure[Online data and $ \sigma=0.2 $]
	{\label{dcon03}
		\includegraphics[trim=30bp 255bp 60bp 265bp, clip, height=3.5cm]{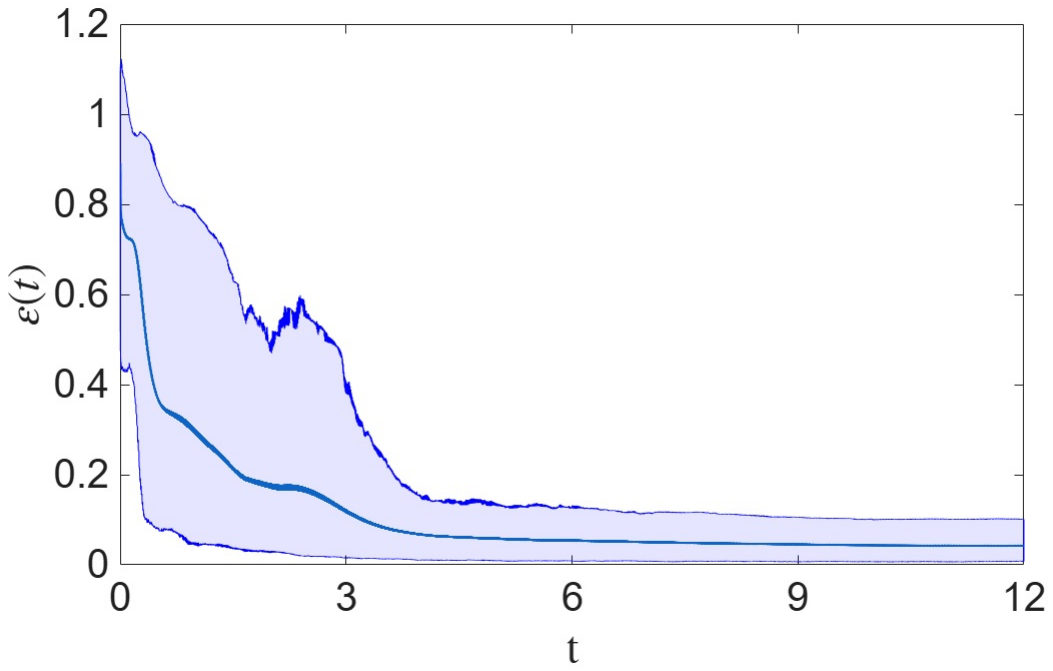} }
	\subfigure[Online data and $ \sigma=0.6 $]
	{\label{dcon3}
		\includegraphics[trim=30bp 255bp 60bp 265bp, clip, height=3.5cm]{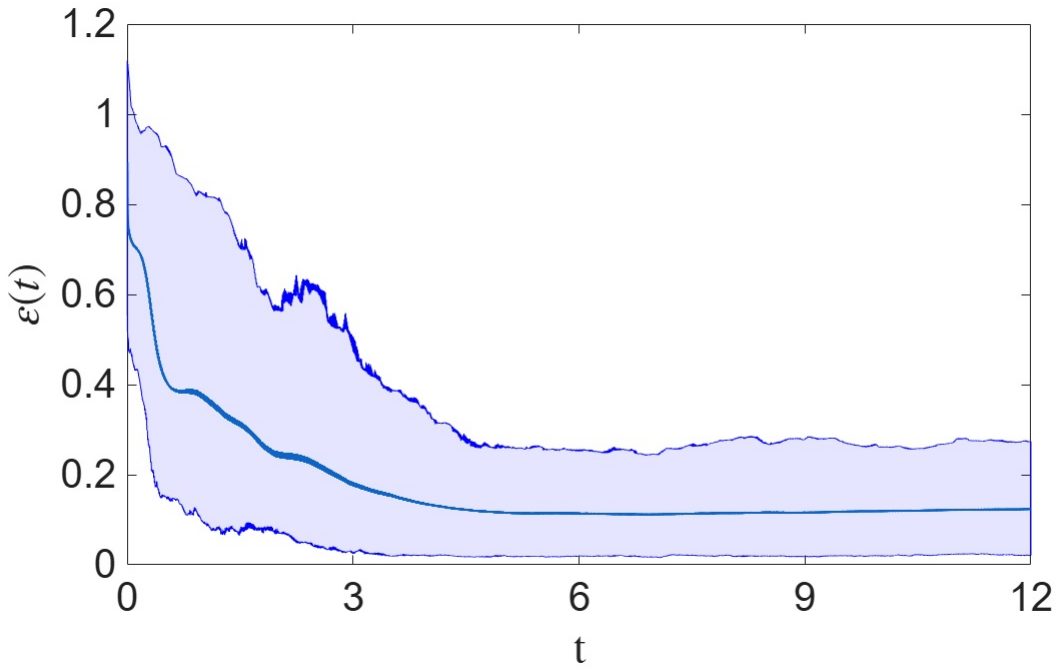} }
	\caption{The average and 90\% confidence intervals of the normalized matching errors $\varepsilon(t)$ for $ r(t)=[0.1,0.1]^\top $ under different noise levels ($\sigma=0.6,0.2,0.06$) and data collection (offline or online).}
	\label{Fdcon}
\end{figure*}

\begin{figure*}[b]
	\centering
	\subfigure[Offline data and $ \sigma=0.06 $]
	{\label{econoff003}
		\includegraphics[trim=20bp 255bp 45bp 270bp, clip, height=3.33cm]{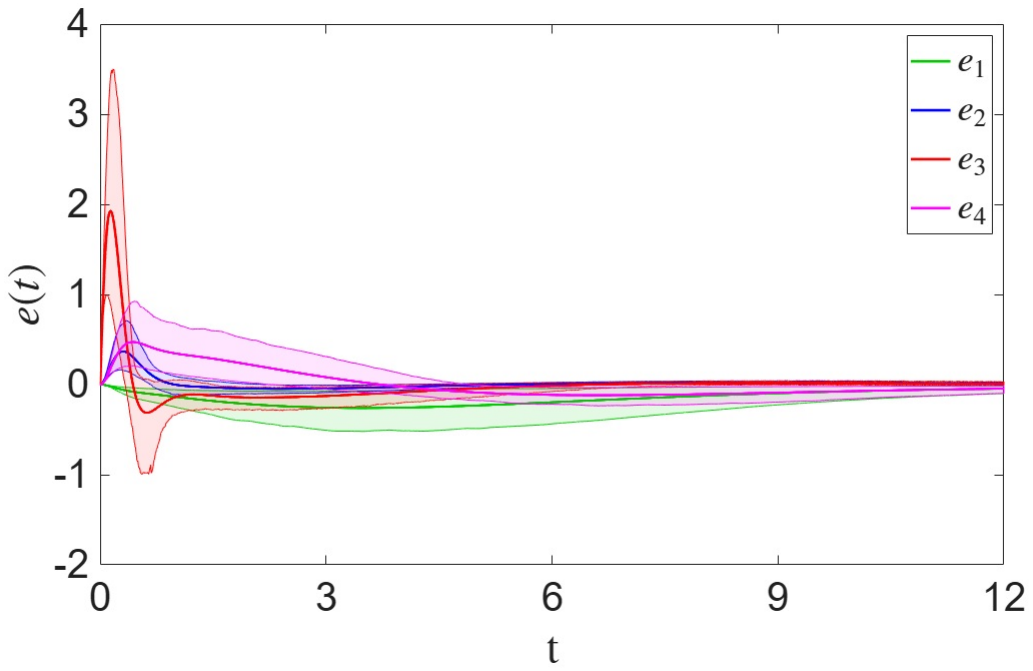} }
	\subfigure[Offline data and $ \sigma=0.2 $]
	{\label{econoff03}
		\includegraphics[trim=20bp 255bp 45bp 270bp, clip, height=3.33cm]{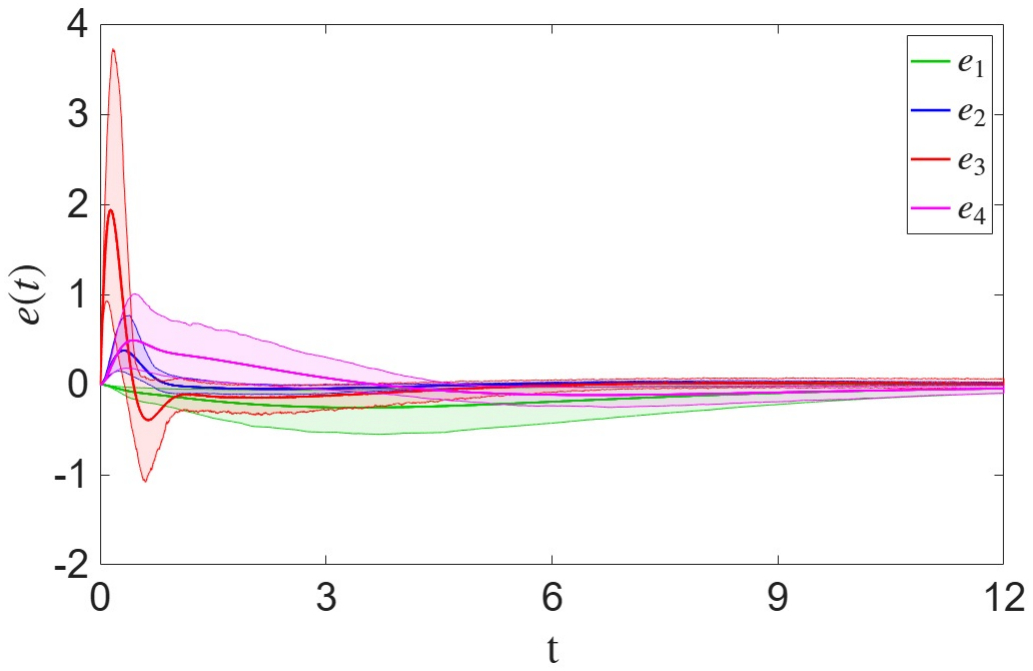} }
	\subfigure[Offline data and $ \sigma=0.6 $]
	{\label{econoff3}
		\includegraphics[trim=20bp 255bp 45bp 270bp, clip, height=3.33cm]{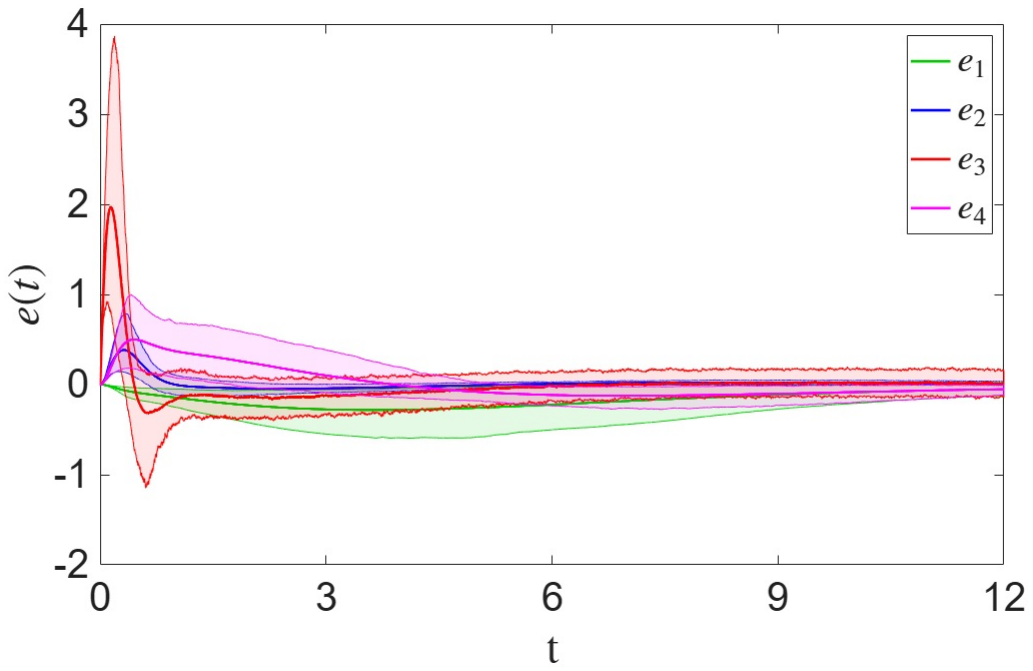} }
	\subfigure[Online data and $ \sigma=0.06 $]
	{\label{econ003}
		\includegraphics[trim=20bp 255bp 45bp 270bp, clip, height=3.33cm]{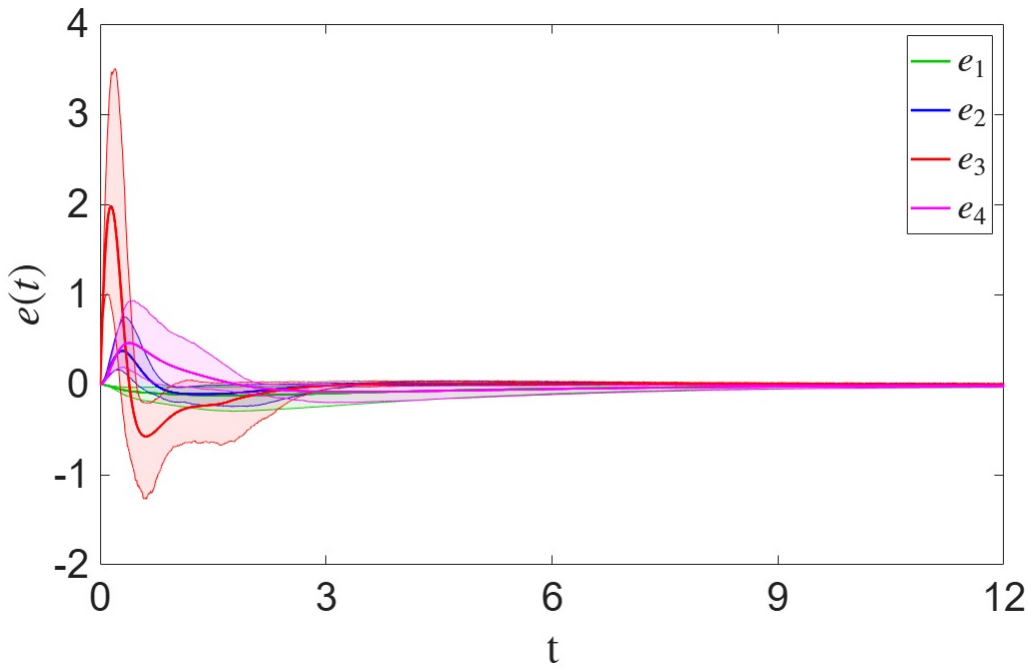} }
	\subfigure[Online data and $ \sigma=0.2 $]
	{\label{econ03}
		\includegraphics[trim=20bp 255bp 45bp 270bp, clip, height=3.33cm]{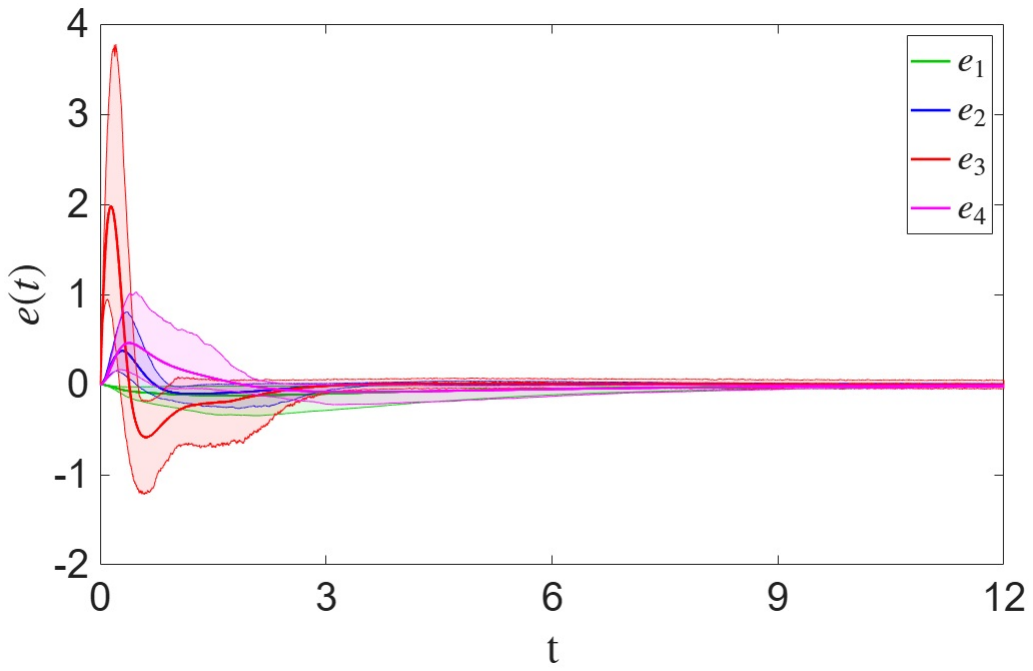} }
	\subfigure[Online data and $ \sigma=0.6 $]
	{\label{econ3}
		\includegraphics[trim=20bp 255bp 45bp 270bp, clip, height=3.33cm]{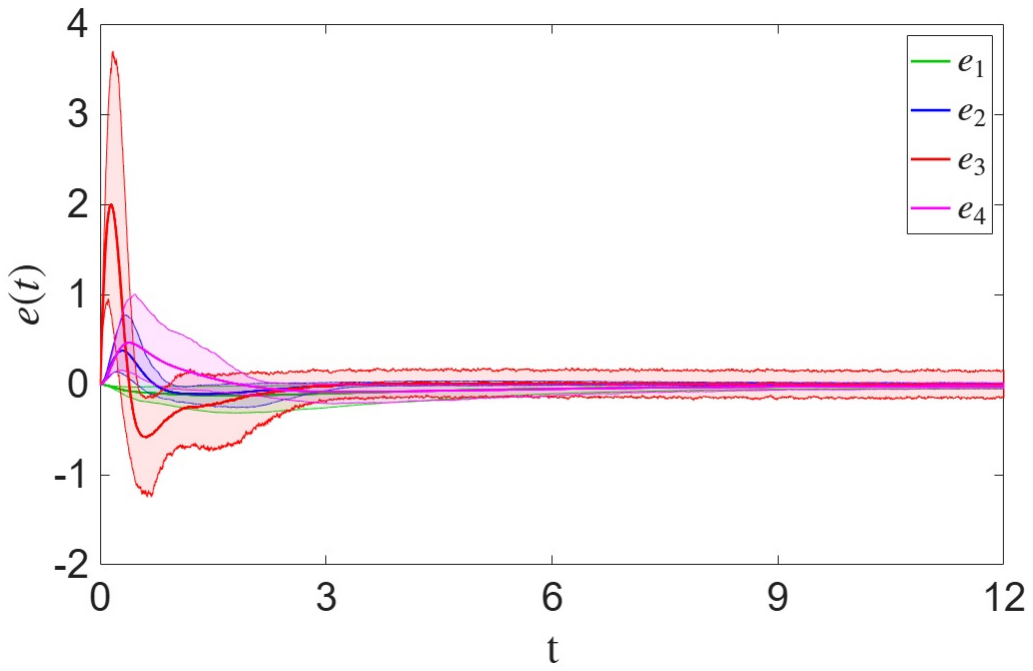} }
	\caption{The average and 90\% confidence intervals of the tracking errors $e(t)$ for $ r(t)=[0.1,0.1]^\top $ under different noise levels ($\sigma=0.06,0.2,0.6$) and data collection (offline or online).}
	\label{Fecon}
\end{figure*}

In this section, we illustrate the theoretical results using an aircraft longitudinal dynamics, describing the velocity along the $ x $- and $ z $-body-axis, the angular velocity around the $y$-body-axis, and the pitch angle \cite{moustakis2018adaptive}.
The dynamics are as in \eqref{cs} with
\begin{equation}\label{AB}
\begin{aligned}
A_\textrm{s}=&\begin{bmatrix}
-0.0190 &  0.0825 & -0.1005 & -0.3206 \\
-0.2154 & -2.7859 & 1.2031 & -0.0271 \\
3.2527 & -30.7871 & -3.5418 &  0      \\
0       &  0      & 1      &  0
\end{bmatrix},\\
B_\textrm{s}=&\begin{bmatrix}
0.0065 & 0.0534 \\
-0.6103 & 0.0020 \\
-74.6355 &  0.5431 \\
0 & 0
\end{bmatrix}\!, \quad
E_\textrm{s}=  \begin{bmatrix}
	0.001 &  0 & 0 \\
	0 & 0.01 & 0 \\
	0 & 0 & 1   \\
	0 & 0 & 0
\end{bmatrix},\\
\end{aligned}
\end{equation}
where the structure of $ E_\textrm{s} $ is motivated by the physics of the system: in particular,
there is no noise on the fourth dynamical equation because the time derivative of the pitch angle corresponds to the angular velocity around the $y$-body-axis.
Consider the reference model \eqref{cr} with
\begin{equation}\label{AmBm}
\begin{aligned}
A_\textrm{m}=&\begin{bmatrix}
-0.0215 &  0.0810 &  -0.0988 &  -0.3180 \\
-0.0706 & -2.6377 &  1.0345 &  -0.2636 \\
20.9585 & -12.6579 & -24.1637 &  -28.9269 \\
0      &  0      &  1     &   0
\end{bmatrix},\\
B_\textrm{m}=&\begin{bmatrix}
0.0066 & 0.0539 \\
-0.6167 & 0.0029 \\
-75.4185 &  0.6600 \\
0 & 0
\end{bmatrix}.
\end{aligned}
\end{equation}
For numerical integration of the continuous-time system \eqref{cs}, the reference model \eqref{cr}, and the filtered dynamics \eqref{xf}--\eqref{wf}, we employ an integration step $dt=0.001$s with zero-order-hold.
The process noise signal in \eqref{cs} is taken as white noise with seven noise levels
$
\sigma=0.02,0.06,0.2,0.6,2,6,20.
$
At every integration step, the discretized process noise is generated from the normal distribution
$
\mathcal N
\bigl(
0,\,
dt\,\sigma^2 I
\bigr).
$
The data matrices $ \left( U,X,X^\textrm{D}\right) $ are collected over 500 independent runs, each one lasting $3$ seconds starting from random initial conditions and applying random input.
After this offline data collection, we set the initial reference and initial system states as $ x_\textrm{m}(0)=x(0)=[2\ \!-\!1\ 1\ 0.5]^\top $, the same initial conditions as in \cite{moustakis2018adaptive}, and we set $M=1200$ and
$\left[ 
	t_1\ t_2\ \cdots\ t_{1200}
\right] =
\left[ 
	0.01\textrm{s}\ 0.02\textrm{s}\ \cdots\ 12\textrm{s}
\right]$.


We control the system using the adaptive law \eqref{alp} and the input \eqref{input} under two types of reference inputs:
\begin{itemize}
	\item[-] \underline{sinusoidal reference}: $ r(t)=\left[ \sin(t),\cos(t)\right]^\top $;
	\item[-] \underline{constant reference}: $ r(t)=\left[ 0.1,0.1\right]^\top $.
\end{itemize}
For each run, we have verified that condition \eqref{image} in Theorem \ref{Tec} is satisfied.
In order to verify the conditions in Theorem \ref{Tstability}, we proceed as follows. 
For each run, the smallest $\gamma$ satisfying \eqref{SNR} is computed for $M=0$ (offline data) and $M=1200$ (offline and online data) under both reference inputs. 
Such $\gamma$ is then used to determine whether \eqref{stability} is satisfied. 
We note that we have verified that, for all values of $\gamma$ satisfying \eqref{stability}$,$ the matrix $\Theta(Q_\gamma,A_{\rm m})$ has no eigenvalues on the imaginary axis. 
The percentage of runs in which the conditions of Theorem \ref{Tstability} are satisfied is reported in the table at the bottom of Page 12, showing that the percentage decreases as the noise level $\sigma$ increases. 
Interestingly, the sinusoidal reference input results in higher percentages, which can be explained with the excitation generated by sinusoidal signals. 
To verify Theorem \ref{Tbound}, we approximate the closed-loop system matrix $ A_\textrm{s}+B_{\hat{K}} $ with $ A_\textrm{s}+B_\textrm{s}\hat{K}(t) $ evaluated at $t=30$s to allow sufficient convergence.
The same table reports the percentage of runs in which $ A_\textrm{s}+B_\textrm{s}\hat{K}(t) $ is Hurwitz, which also decreases as the noise level $\sigma$ increases.
One may observe that such percentage remains at 100\% up to $\sigma=0.6$, even when condition \eqref{stability} is not satisfied.
This can be explained by noting that $\Lambda(Q_\gamma,A_\textnormal{m},0)<0$ guarantees the Hurwitz property in the entire set $\mathcal{Z}_{n}(\Pi(Q_\gamma,A_\textrm{m}))$ containing \emph{all} systems consistent with the collected data.
Hence, when \eqref{stability} does not hold, although the Hurwitz property in the entire $\mathcal{Z}_{n}(\Pi(Q_\gamma,A_\textrm{m}))$ cannot be guaranteed, it is still possible with high percentage that $ A_\textrm{s}+B_{\hat{K}} $ is Hurwitz for the true (but unknown) system.

To further illustrate parameter convergence, both offline and online data collection mechanisms are considered, corresponding to $M=0$ and $M=1200$, respectively.
We consider the following normalized matching error (calculated using the 2-norm) to quantify the parameter convergence
$$
\varepsilon(t):=
\frac
{
	\left\| \begin{bmatrix}
		A_\textrm{m}&B_\textrm{m}
	\end{bmatrix}-
	\begin{bmatrix}
		A_\textrm{s}+B_\textrm{s}\hat{K}(t)&B_\textrm{s}\hat{L}(t)
	\end{bmatrix}\right\| 
}
{\left\| \begin{bmatrix}
		A_\textrm{m}&B_\textrm{m}
	\end{bmatrix}\right\| }.
$$
Figs. \ref{Fdsin}-\ref{Fecon} report the results for each combination of reference input and data collection mechanism under $\sigma=0.06$, $0.2$ and $0.6$. 
The results come from 500 independent trajectories with different noise realizations.

For the sinusoidal reference, Fig.~\ref{Fdsin} presents the average and 90\% confidence intervals of the normalized matching errors $ \varepsilon(t) $, while Fig.~\ref{Fesin} illustrates the average and 90\% confidence intervals of the tracking error dynamics for different values of $ \sigma $.
Similarly, for the constant reference, Fig.~\ref{Fdcon} depicts the average and 90\% confidence intervals of $ \varepsilon(t) $, and Fig.~\ref{Fecon} shows the average and 90\% confidence intervals of $e(t)$ for different values of $ \sigma $.
Approximate matching and reference tracking can be observed.
Compared with offline data collection (Fig.~\ref{dsinoff3}-\ref{dsinoff003} and Fig.~\ref{dconoff3}-\ref{dconoff003}), online data collection (Fig.~\ref{dsin3}-\ref{dsin003} and Fig.~\ref{dcon3}-\ref{dcon003}) exhibits faster convergence. 
Interestingly, using online data does not necessarily provide a smaller limit matching error, as it can be seen in Fig.~\ref{dcon3}.
Furthermore, comparisons among $ \sigma=0.06$, $0.2$ and $0.6 $ reveal that the noise primarily affects the limit matching error, while having less impact on the convergence rate.


\section{Conclusion}

In this paper, we have proposed a robust MRAC framework that guarantees parameter convergence without requiring persistency of excitation of the involved signals.
Via a novel adaptive law, the closed-loop system has been proven to converge exponentially to an approximate solution to the matching equation, with the matching error being characterized as a function of the noise. 
Moreover, by combining the KYP lemma with QMI techniques, we have derived a necessary and sufficient condition under which the closed-loop system matrix is guaranteed to be Hurwitz in the limit.
In the absence of noise, the proposed framework reduces to a weaker data condition for exact parameter convergence when compared to data conditions for existing noise-free methods.
Future work will consider extensions to input-output settings.

\bibliographystyle{abbrv}       
\bibliography{References}           

%

\end{document}